\numberwithin{equation}{section}
\newtheorem{theorem}{Theorem}[section]
\newtheorem{lemma}[theorem]{Lemma}
\newtheorem{definition}[theorem]{Definition}
\newtheorem{corollary}[theorem]{Corollary}
\newtheorem{proposition}[theorem]{Proposition}
\newtheorem{remark}[theorem]{Remark}
\newtheorem{example}[theorem]{Example}
\newtheorem{problem}[theorem]{Problem}
\begin{document}

\title{A boosted DC algorithm for non-differentiable DC components with non-monotone line search\thanks{Originally submitted at 2021-11-02, 14:49. Current version was submitted to the editors at 2022-06-13, 13:07. {The first author was supported in part by CNPq grant 304666/2021-1. The second author was supported in part by CAPES. The third author was supported in part by CNPq grant 424169/2018-5.}}}
\author{O. P. Ferreira \thanks{Instituto de Matem\'atica e Estat\'istica, Universidade Federal de Goi\'as,  CEP 74001-970 - Goi\^ania, GO, Brazil, E-mail:  {\tt  orizon@ufg.br}},
E. M. Santos \thanks{Instituto Federal de Educa\c{c}\~{a}o, Ci\^{e}ncia e Tecnologia do Maranh\~{a}o, A\c{c}ail\^{a}ndia, MA, Brazil E-mail: {\tt elianderson.santos@ifma.edu.br}.},
J. C. O. Souza \thanks{Department of Mathematics, Federal University of Piau\'{i}, Teresina, PI, Brazil, E-mail: {\tt joaocos.mat@ufpi.edu.br}
  }
}

\maketitle

\begin{abstract}
We introduce a new approach to apply the boosted difference of convex functions algorithm (BDCA) for solving non-convex and non-differentiable problems involving difference of two convex functions (DC functions). Supposing the first DC component differentiable and the second one possibly non-differentiable, the main idea of BDCA is to use the point computed by the DC algorithm (DCA) to define a descent direction and perform a monotone line search to improve the decreasing the objective function accelerating the convergence of the DCA. However, if the first DC component is non-differentiable, then the direction computed by BDCA can be an ascent direction and a monotone line search cannot be performed.
Our approach uses a non-monotone line search in the BDCA (nmBDCA) to enable a possible growth in the objective function values controlled by a parameter. Under suitable assumptions, we show that any cluster point of the sequence generated by the nmBDCA is a critical point of the problem under consideration and provide some iteration-complexity bounds. Furthermore, if the first DC component is differentiable, we present different iteration-complexity bounds and prove the full convergence of the sequence under the Kurdyka-\L{}ojasiewicz property of the objective function. Some numerical experiments show that the nmBDCA outperforms the DCA such as its monotone version.
\end{abstract}

\begin{keywords}
{DC function}, boosted difference of convex functions algorithm, DC algorithm, non-monotone line search, Kurdyka-\L{}ojasiewicz property.
\end{keywords}

\begin{AMS}
{90C26}, 65K05, 65K10, 47N10
\end{AMS}

\section{Introduction}

In this paper, we are interested in solving the following non-convex and non-differentiable DC optimization problem:
\begin{equation}\label{Pr:DCproblem}
\begin{array}{c}
\min \phi(x):=g(x)-h(x), \qquad \mbox{s.t. } ~x\in \mathbb{R}^{n}.
\end{array}
\end{equation}
where $g, h:\mathbb{R}^n \to \mathbb{R} $ are convex functions possibly non-differentiable. DC programming has been developed and studied in the last decades and successfully applied in different fields including but not limited to image processing \cite{LouZengOsherXin2015}, compressed sensing \cite{YinLouQi2015}, location problems \cite{AnNamYen2916, Brimberg1995, CruzNetoLopesSantosSouza2019}, sparse optimization problems \cite{GotohTakeda2018}, the minimum sum-of-squares clustering problem \cite{ARAGON2019, CuongYaoYen2020, OrdiBagirov2015}, the bilevel hierarchical clustering problem \cite{NamGeremewReynoldsTran2018}, clusterwise linear regression \cite{BagirovUgon2018}, the multicast network design problem \cite{GeremewNamSemenovBoginski2018} and multidimensional scaling problem \cite{LeTao2001, ARAGON2019, BeckHallak2020}. 

To the best of our knowledge, DCA was the first algorithm directly designed to solve \eqref{Pr:DCproblem}; see \cite{TaoLe1997,Pham1986}. Since then, several variants of DCA have arisen and its theoretical and practical properties have been investigated over the years, resulting in a wide literature on the subject; see \cite{OliveiraABC,DCAFirst2018}. Algorithmic aspects of DC programming have received significant attention lately such as subgradient-type (see \cite{BeckHallak2020, KhamaruWainwright2019}), proximal-subgradient \cite{CruzNetoEtAl2018, Moudafi2006, SOUZA3016, SunSampaio2003}, proximal bundle \cite{Welington2019}, double bundle \cite{KaisaBagirov2018}, codifferential \cite{BagirovUgon2011} and inertial method \cite{WelingtonTcheou2019}. Nowadays, DC programming plays an important role in high dimension non-convex programming and DCA (and its variants) is commonly used due to its simplicity, inexpensiveness and efficiency.

Recently, \cite{ARAGON2017, ARAGON2019} proposed a boosted DC algorithm (BDCA) for solving \eqref{Pr:DCproblem} which has the property of accelerating the convergence of DCA. In \cite{ARAGON2017}, the convergence of BDCA is proved if both functions $g$ and $h$ are differentiable, and the non-differentiable case is considered in \cite{ARAGON2019} but still supposing that $g$ is differentiable and $h$ is possibly non-differentiable. Although DCA is a decent method, the main idea of BDCA is to define a descent direction using the point computed by DCA and a line search to take a longer step than the DCA obtaining in this way a larger decreasing of the objective value per iteration. In addition to accelerating the convergence of DCA, we have been noticed that BDCA escapes from bad local solutions thanks to the line search, and hence, BDCA can be used to either accelerate DCA and provide better solutions. The differentiability of the DC component $g$ is needed to guarantee a descent direction from the point computed by DCA, otherwise such direction can be an ascent direction; see \cite[Remark~1]{ARAGON2017} and \cite[Example~3.4]{ARAGON2019}.

The aim of this paper is to provide a version of BDCA which can still be applied if both DC components are non-differentiable. The key idea is to use a non-monotone line search allowing some growth in the objective function values controlled by a parameter. We study the convergence analysis of the non-monotone BDCA (nmBDCA) as well as iteration-complexity bounds. Therefore, nmBDCA enlarges the applicability of BDCA to a broader class of non-smooth functions keeping up with its efficiency and simplicity. The concept of non-monotone line search, that we use here as a synonym for inexact line search, was firstly proposed by \cite{Grippo1986}, and later a new non-monotone search was considered by \cite{ZhangHager2004}. In \cite{SachsSachs2011}, an interesting general framework for non-monotone line research was proposed and more recently some variants have been considered; see for instance \cite{GrapigliaSachs2017, GrapigliaSachs2020}.

Furthermore, we prove the global convergence under the Kurdyka-\L{}ojasiewicz property as well as different iteration-complexity bounds for the case where the DC component $g$ is differentiable. We present some numerical experiments involving academic test functions to show the efficiency of the method comparing its performance with DCA.

The rest of this paper is organized as follows. In Section~\ref{sec2}, we recall some definitions and preliminary results used throughout the paper. The methods DCA, BDCA and nmBDCA are presented in Section~\ref{sec3}. In Section{ \ref{sec4}, we show the convergence analysis and iteration-complexity analysis of nmBDCA for a possible non-differentiable $g$. In Section~\ref{sec5} we established some iteration-complexity bounds for the sequence generated by nmBDCA under the assumption that the function $g$ is differentiable and a full convergence of sequence is also established under Kurdyka-\L{}ojasiewicz property. In Section~\ref{Sec6}, we present some numerical experiments to illustrate the performance of the method. The last section contains some remarks and future research directions are presented.

\section{Preliminaries} \label{sec2}
In this section we present some notations, definitions and results that will be used throughout the paper which can be found in \cite{Amir, clarke1983optimization, Lemarechal1993}.
\begin{definition}[{\cite[Definition 1.1.1, p. 144, and Proposition 1.1.2, p. 145]{Lemarechal1993}}] 	
	A function $f\colon\mathbb{R}^{n}\to \mathbb{R} $ is said to be convex if $f(\lambda x + (1-\lambda)y)\leq \lambda f(x) + (1-\lambda) f(y)$, for all $x,y\in {\mathbb{R}^{n}}$ and $\lambda \in\, ]0,1[$.
	Moreover, $f$ is said to be strongly convex with modulus $\sigma > 0 $ if $f(\lambda x + (1-\lambda)y)\leq \lambda f(x) + (1-\lambda) f(y)-\frac{\sigma}{2}\lambda(1-\lambda)\|x-y\|^2$, for all $x,y\in {\mathbb{R}^{n}}$ and $\lambda \in \, ]0,1[$.
\end{definition}
If in the above definition $f$ is strongly convex with modulus $\sigma=0$, then $f$ is convex. 
\begin{definition}[{\cite[p. 25]{clarke1983optimization}}]
	We say that $f\colon\mathbb{R}^{n}\to\mathbb{R}$ is locally Lipschitz if, for all $x\in \mathbb{R}^{n}$, there exist a constant $K_{x}>0$ and a neighborhood $U_{x}$ of $x$ such that $|f(u)-f(y)|\leq K_{x}\|u-y\|$, for all $ u,y\in U_{x}.$
\end{definition}
If $f\colon\mathbb{R}^{n}\to \mathbb{R}$ is convex, then $f$ is locally Lipschitz; see \cite[p. 34]{clarke1983optimization}.
\begin{definition}[{\cite[p. 27]{clarke1983optimization}}] 
	Let $f\colon\mathbb{R}^{n}\to\mathbb{R}$ be a locally Lipschitz function.
	The Clarke's subdifferential of $f$ at $x\in \mathbb{R}^{n}$ is given by $\partial _{c} f(x)=\{v \in \mathbb{R}^{n}\:|\: f^{\circ }(x;d)\geq \langle v,d \rangle, ~
\forall d \in \mathbb{R}^{n} \},$
where $f^{\circ }(x;d)$ is the generalized directional derivative of $f$ at $x$ in the direction $d$ given by
$ f^{\circ }(x;d)= \limsup _{ u\rightarrow x, t\downarrow 0} (f(u+td)-f(u))/t.$ 
\end{definition}
If $f$ is convex, then $\partial _{c}f(x)$ coincides with the subdifferential $\partial f(x)$ in the sense of convex analysis and $f^{\circ }(x;d)$ coincides with the usual directional derivative $f'(x;d)$; see {\cite[p. 36]{clarke1983optimization}}. 
\begin{theorem}[{\cite[Proposition 2.1.2, p. 27]{clarke1983optimization}}] 
	Let $f\colon\mathbb{R}^{n}\to\mathbb{R}$ be a locally Lipschitz function. Then, for all $x\in \mathbb{R}^{n}$, $\partial _{c}f(x)$ is a non-empty, convex, compact subset of $\mathbb{R}^{n}$ and $\|v\|\leq K_{x},$ for all $v\in \partial _{c}f(x)$, where $K_{x}>0$ is the Lipschitz constant of $f$ around $x$.
\end{theorem}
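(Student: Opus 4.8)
The plan is to first analyze the map $d \mapsto f^{\circ}(x;d)$, showing it is a finite sublinear function bounded by $K_x\|d\|$, and then to read off each of the four asserted properties of $\partial_c f(x)$ from this. Fix $x \in \mathbb{R}^n$ and let $K_x > 0$ and $U_x$ be the Lipschitz constant and neighborhood from the definition of local Lipschitzness. First I would show that $f^{\circ}(x;d)$ is finite and satisfies $|f^{\circ}(x;d)| \leq K_x\|d\|$ for every $d$: for $u$ sufficiently close to $x$ and $t>0$ sufficiently small, both $u$ and $u+td$ lie in $U_x$, so $|f(u+td)-f(u)| \leq K_x\|td\| = K_x t\|d\|$, whence the difference quotient defining $f^{\circ}$ is bounded in absolute value by $K_x\|d\|$; passing to the $\limsup$ preserves this bound and guarantees that the value is a real number.

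Next I would verify that $p(d) := f^{\circ}(x;d)$ is sublinear, i.e. positively homogeneous and subadditive. Positive homogeneity, $p(\lambda d)=\lambda p(d)$ for $\lambda>0$, follows from the change of variable $s = \lambda t$ in the difference quotient. For subadditivity I would write $f(u+t(d_1+d_2))-f(u) = [f(u+td_1+td_2)-f(u+td_1)] + [f(u+td_1)-f(u)]$ and note that as $u\to x$, $t\downarrow 0$ one also has $u+td_1 \to x$; using that the $\limsup$ of a sum is at most the sum of the $\limsup$'s then yields $p(d_1+d_2) \leq p(d_2) + p(d_1)$.

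With these properties in hand, three of the four claims are immediate. Convexity and closedness follow because $\partial_c f(x) = \bigcap_{d\in\mathbb{R}^n}\{v : \langle v,d\rangle \leq f^{\circ}(x;d)\}$ is an intersection of closed half-spaces. The norm bound, and hence boundedness, follows by taking $v \in \partial_c f(x)$ and testing the defining inequality with $d=v$: from $\langle v,v\rangle \leq f^{\circ}(x;v) \leq K_x\|v\|$ we obtain $\|v\|^2 \leq K_x\|v\|$, so $\|v\|\leq K_x$. Being closed and bounded in $\mathbb{R}^n$, $\partial_c f(x)$ is compact by the Heine--Borel theorem.

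The main obstacle is non-emptiness, which I would settle via the Hahn--Banach theorem. Since $p = f^{\circ}(x;\cdot)$ is a finite sublinear function on $\mathbb{R}^n$, Hahn--Banach provides a linear functional $\ell$ dominated by $p$, that is $\ell(d) \leq p(d)$ for all $d$; representing $\ell$ by a vector $v\in\mathbb{R}^n$ through the inner-product identification gives $\langle v,d\rangle \leq f^{\circ}(x;d)$ for all $d$, i.e. $v \in \partial_c f(x)$, so the set is non-empty. Concretely, one first defines $\ell$ on the one-dimensional subspace $\mathrm{span}\{d_0\}$ by $\ell(\lambda d_0)=\lambda\, p(d_0)$, checks it is dominated by $p$ there, and then invokes Hahn--Banach to extend it to all of $\mathbb{R}^n$ while preserving the domination.
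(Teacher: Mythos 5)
The paper states this result without proof, citing Clarke's Proposition 2.1.2, and your argument is a correct reconstruction of the standard proof given there: the bound $|f^{\circ}(x;d)|\leq K_x\|d\|$ and the sublinearity of $d\mapsto f^{\circ}(x;d)$ give closedness and convexity (intersection of half-spaces), the norm bound by testing with $d=v$ (hence compactness via Heine--Borel), and non-emptiness via Hahn--Banach. The one step you leave implicit --- that $\ell(\lambda d_0)=\lambda p(d_0)$ is dominated by $p$ on $\mathrm{span}\{d_0\}$ also for $\lambda<0$ --- reduces to $p(d_0)+p(-d_0)\geq p(0)=0$, which follows from the subadditivity and finiteness you already established, so the argument is complete.
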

\begin{proposition}
	\label{cont_subdif}
	Let $f\colon\mathbb{R}^{n}\rightarrow \mathbb{R}$ be convex and $(x^{k})_{k\in\mathbb{N}}$ such that ~$\lim _{k\rightarrow\infty}x^{k}=x^{*}$.
	If $(y^{k})_{k\in\mathbb{N}}$ is a sequence such that $y^{k}\in \partial f(x^{k})$ for every $k\in \mathbb{N}$, then $(y^{k})_{k\in\mathbb{N}}$ is bounded and its cluster points belong to $\partial f(x^{*}).$
\end{proposition}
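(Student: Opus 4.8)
The plan is to establish the two conclusions separately: first the boundedness of $(y^{k})_{k\in\mathbb{N}}$, and then the inclusion of its cluster points in $\partial f(x^{*})$. Both parts rest on facts recalled just above the statement: a convex $f\colon\mathbb{R}^{n}\to\mathbb{R}$ is locally Lipschitz and (being finite-valued and convex on all of $\mathbb{R}^{n}$) continuous, and its convex subdifferential coincides with the Clarke subdifferential, so that the norm bound $\|v\|\le K_{x}$ for $v\in\partial_{c}f(x)$ from the preceding Theorem is available for the convex subgradients $y^{k}$.

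For boundedness I would first invoke local Lipschitzness at the limit point: there exist a constant $K_{x^{*}}>0$ and a neighborhood $U_{x^{*}}$ of $x^{*}$ on which $f$ is $K_{x^{*}}$-Lipschitz. Since $x^{k}\to x^{*}$, there is an index $k_{0}$ with $x^{k}\in U_{x^{*}}$ for all $k\ge k_{0}$; for such $k$ the Theorem above yields $\|y^{k}\|\le K_{x^{*}}$, because $y^{k}\in\partial f(x^{k})=\partial_{c}f(x^{k})$. The finitely many remaining terms $y^{0},\dots,y^{k_{0}-1}$ each lie in the compact (hence bounded) set $\partial f(x^{k})$, again by the Theorem applied at each $x^{k}$. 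Taking the maximum over these finitely many bounds and $K_{x^{*}}$ shows $(y^{k})_{k\in\mathbb{N}}$ is bounded, and in particular it admits cluster points.

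For the inclusion, let $y^{*}$ be any cluster point of $(y^{k})_{k\in\mathbb{N}}$ and choose a subsequence $(y^{k_{j}})_{j\in\mathbb{N}}$ with $y^{k_{j}}\to y^{*}$. Since $y^{k_{j}}\in\partial f(x^{k_{j}})$, the subgradient inequality gives $f(z)\ge f(x^{k_{j}})+\langle y^{k_{j}},\,z-x^{k_{j}}\rangle$ for every $z\in\mathbb{R}^{n}$. I would then pass to the limit $j\to\infty$ in this inequality: using $x^{k_{j}}\to x^{*}$, $y^{k_{j}}\to y^{*}$, the continuity of $f$, and the continuity of the inner product, one obtains $f(z)\ge f(x^{*})+\langle y^{*},\,z-x^{*}\rangle$ for all $z\in\mathbb{R}^{n}$, which is exactly the assertion $y^{*}\in\partial f(x^{*})$.

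The argument is essentially routine. The only point requiring a little care is the boundedness step, where the uniform bound $\|y^{k}\|\le K_{x^{*}}$ may only be claimed for the tail of the sequence whose iterates fall inside the single neighborhood $U_{x^{*}}$, so the initial finitely many terms must be bounded separately. Once that is handled, the limit passage in the subgradient inequality is immediate from the continuity of $f$, and no further estimates are needed.
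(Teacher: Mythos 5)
Your proof is correct: the local Lipschitz bound around $x^{*}$ (plus compactness of the finitely many initial subdifferentials) gives boundedness, and passing to the limit in the subgradient inequality using continuity of $f$ gives the inclusion of cluster points in $\partial f(x^{*})$; this is the standard closedness-of-the-graph argument. The paper states this proposition in its preliminaries without proof, treating it as a known fact of convex analysis, so there is no in-paper argument to compare against, but your write-up supplies exactly the expected reasoning and handles the one delicate point (that the uniform bound $\|y^{k}\|\leq K_{x^{*}}$ applies only to the tail of the sequence) correctly.
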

\begin{theorem}[{\cite[Theorem 5.25, p. 122 and Corollary 3.68, p. 76]{Amir}}] 
Let $f\colon\mathbb{R}^{n}\to \mathbb{R} $ be a strongly convex function. Then, $f$ has a unique minimizer $x^{*}\in \mathbb{R}^{n}$, and $0\in \partial f(x^{*})$.
\end{theorem}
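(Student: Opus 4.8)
The plan is to split the statement into three claims and prove them in order: existence of a global minimizer, uniqueness of that minimizer, and the first-order optimality condition $0\in\partial f(x^{*})$. The engine driving all three is the quadratic growth furnished by strong convexity, which I would first record as a subgradient inequality and then exploit.

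First I would establish existence via coercivity. Fix any $\bar{x}\in\mathbb{R}^{n}$; since $f$ is convex it is locally Lipschitz, so by the earlier theorem guaranteeing that the (Clarke, hence convex) subdifferential is nonempty we may pick $v\in\partial f(\bar{x})$. Starting from the strong convexity inequality written with the points $\bar{x}$ and $y$, I would subtract $f(\bar{x})$, divide by $1-\lambda$, and let $\lambda\uparrow 1$ to obtain the directional-derivative bound $f'(\bar{x};y-\bar{x})\leq f(y)-f(\bar{x})-\tfrac{\sigma}{2}\|y-\bar{x}\|^{2}$. Combining this with $\langle v,y-\bar{x}\rangle\leq f'(\bar{x};y-\bar{x})$ yields the strongly convex subgradient inequality
\[
f(y)\geq f(\bar{x})+\langle v,y-\bar{x}\rangle+\frac{\sigma}{2}\|y-\bar{x}\|^{2},\qquad\forall\,y\in\mathbb{R}^{n}.
\]
The right-hand side tends to $+\infty$ as $\|y\|\to\infty$, so $f$ is coercive; being continuous (again by local Lipschitzness) it attains its infimum on a sufficiently large closed ball by Weierstrass, and this point is a global minimizer $x^{*}$.

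For uniqueness I would argue by contradiction: if $x_{1}^{*}\neq x_{2}^{*}$ were two minimizers with common minimal value $m$, then applying strong convexity at the midpoint ($\lambda=\tfrac12$) gives $f(\tfrac{x_{1}^{*}+x_{2}^{*}}{2})\leq m-\tfrac{\sigma}{8}\|x_{1}^{*}-x_{2}^{*}\|^{2}<m$, contradicting minimality. Finally, the optimality condition is immediate from the definition of the convex subdifferential: since $x^{*}$ is a global minimizer, $f(y)\geq f(x^{*})=f(x^{*})+\langle 0,y-x^{*}\rangle$ for all $y$, which is exactly the statement that $0\in\partial f(x^{*})$.

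The main obstacle is the existence step, or more precisely making the coercivity argument airtight: one must be sure the subdifferential is nonempty so that $v$ can be chosen, and that passing to the limit $\lambda\uparrow 1$ in the difference quotient is justified (this is where convexity of the quotient and the definition of $f'$ as a one-sided derivative enter). Once the quadratic minorant is in hand, coercivity and the Weierstrass argument are routine, and uniqueness and optimality follow quickly. An alternative that avoids subgradients of $f$ altogether is to note that $f(\cdot)-\tfrac{\sigma}{2}\|\cdot\|^{2}$ is convex (a one-line check using $\|\lambda x+(1-\lambda)y\|^{2}=\lambda\|x\|^{2}+(1-\lambda)\|y\|^{2}-\lambda(1-\lambda)\|x-y\|^{2}$), hence bounded below by an affine function, which again yields the coercive quadratic minorant.
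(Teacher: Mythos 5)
Your argument is correct. Note, however, that the paper does not prove this statement at all: it is quoted as a preliminary result with a citation to Beck's book, so there is no in-paper proof to compare against. What you give is essentially the standard textbook argument, and every step checks out: the passage to the limit $\lambda\uparrow 1$ is justified because the difference quotient of a convex function is monotone in the step length, so the one-sided directional derivative exists as an infimum; the quadratic minorant then gives coercivity, Weierstrass gives existence, the midpoint inequality gives uniqueness, and $0\in\partial f(x^{*})$ is immediate from the definition of the convex subdifferential at a global minimizer. One small economy you could have made: the subgradient inequality $f(y)\geq f(\bar{x})+\langle v,y-\bar{x}\rangle+\frac{\sigma}{2}\|y-\bar{x}\|^{2}$ that you rederive from scratch is exactly item (ii) of the paper's Theorem on equivalent characterizations of strong convexity (cited from the same source), so you could simply invoke it rather than reprove it. Your alternative route via convexity of $f-\frac{\sigma}{2}\|\cdot\|^{2}$ is also valid and arguably cleaner, since it reduces coercivity to the existence of an affine minorant of a finite convex function.
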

\begin{lemma}[{\cite[Lemma 5.20, p. 119]{Amir}}] 
Let $f\colon\mathbb{R}^{n}\to \mathbb{R} $ be a strongly convex function with modulus $\sigma>0$, and
let $\bar{f}:\mathbb{R}^{n}\to \mathbb{R}$ be convex. Then $f + \bar{f}$ is strongly convex function with modulus $\sigma>0$.
\end{lemma}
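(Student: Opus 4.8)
The plan is to argue directly from the definition of strong convexity stated above, since the result follows immediately by adding the defining inequalities of the two functions. Write $F := f + \bar{f}$. First I would fix arbitrary $x,y\in\mathbb{R}^{n}$ and $\lambda\in\,]0,1[$, and record the two hypotheses. Strong convexity of $f$ with modulus $\sigma$ gives
\[
f(\lambda x + (1-\lambda)y)\le \lambda f(x) + (1-\lambda)f(y) - \frac{\sigma}{2}\lambda(1-\lambda)\|x-y\|^{2},
\]
while convexity of $\bar{f}$ gives
\[
\bar{f}(\lambda x + (1-\lambda)y)\le \lambda \bar{f}(x) + (1-\lambda)\bar{f}(y).
\]

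Next I would add these two inequalities termwise. On the left-hand side the two evaluations combine into $F(\lambda x + (1-\lambda)y)$; on the right-hand side the convex combinations of the values of $f$ and $\bar{f}$ regroup into $\lambda F(x) + (1-\lambda)F(y)$, and the only quadratic correction that survives is the one coming from $f$, namely $-\frac{\sigma}{2}\lambda(1-\lambda)\|x-y\|^{2}$. This yields
\[
F(\lambda x + (1-\lambda)y)\le \lambda F(x) + (1-\lambda)F(y) - \frac{\sigma}{2}\lambda(1-\lambda)\|x-y\|^{2},
\]
which is exactly the strong convexity inequality for $F$ with modulus $\sigma$. Since $x,y$ and $\lambda$ were arbitrary, the claim follows.

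There is essentially no obstacle here: the statement is a direct consequence of the definition and requires no auxiliary result. The only point meriting (minimal) care is that adding a merely convex function contributes no additional quadratic term, so the modulus of $F$ remains the same $\sigma$ as that of $f$ and is not reduced; in the borderline reading where $\bar{f}$ is regarded as strongly convex with modulus $0$, this is fully consistent with the observation made right after the definition of strong convexity in the excerpt.
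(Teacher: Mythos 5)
Your proof is correct: adding the defining inequality of strong convexity for $f$ to the convexity inequality for $\bar{f}$ immediately yields the strong convexity inequality for $f+\bar{f}$ with the same modulus $\sigma$. Note that the paper does not prove this lemma at all --- it is quoted as a preliminary result from \cite[Lemma 5.20]{Amir} --- so there is nothing to compare against; your direct argument from the definition is the standard one and is complete.
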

\begin{theorem}[{\cite[Theorem 5.24, p. 119]{Amir}}] \label{teo2}
	Let $f\colon\mathbb{R}^{n}\to \mathbb{R}$ be a convex function. Then, for a given $\sigma> 0$, the following statements are equivalent:
	\begin{enumerate}[label={({\roman*})}, ref={(\roman*)}]
		\item
			$f$ is a strongly convex function with modulus $\sigma> 0$.
		\item \label{it:teo2:fct-geq}
			$f(y)\geq f(x) + \langle v, y-x \rangle + \frac{\sigma}{2} \| y-x\|^{2}$, for all $x,y\in \mathbb{R} ^{n} $ and all $v\in \partial f(x)$.
		\item \label{it:teo2:inner-geq}
			$\langle w-v,x-y \rangle \geq \sigma \| y-x\|^{2}$, for all $x,y\in \mathbb{R} ^{n}$, all $w\in \partial f(x)$ and all $v\in \partial f(y).$
	\end{enumerate}
\end{theorem}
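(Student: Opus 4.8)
The statement is the classical triple characterization of strong convexity, and the plan is to prove it by establishing the two equivalences (i)$\Leftrightarrow$(ii) and (ii)$\Leftrightarrow$(iii) through a short cycle of implications. All the machinery needed is already recorded in the preliminaries: the subgradient inequality for convex functions, the coincidence of the directional derivative with the support function of the subdifferential, i.e. $f'(x;d)=\max_{v\in\partial f(x)}\langle v,d\rangle$ together with $\langle v,d\rangle\le f'(x;d)$ for every $v\in\partial f(x)$, and the local Lipschitz continuity of finite-valued convex functions. I would prove (i)$\Rightarrow$(ii), (ii)$\Rightarrow$(iii), (ii)$\Rightarrow$(i), and (iii)$\Rightarrow$(ii); the first three are direct computations, while the last is where the real work lies.

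For (i)$\Rightarrow$(ii), fix $x,y$ and $v\in\partial f(x)$, and apply the strong convexity inequality to $x$ and $y$ with weight $1-t$ on $y$; rearranging gives $\frac{f(x+t(y-x))-f(x)}{t}\le f(y)-f(x)-\frac{\sigma}{2}(1-t)\|y-x\|^2$ for $t\in\,]0,1[$. Letting $t\downarrow 0$, the left-hand side tends to $f'(x;y-x)\ge\langle v,y-x\rangle$, which is exactly (ii). For (ii)$\Rightarrow$(iii), write (ii) twice, once with base point $x$ and subgradient $w\in\partial f(x)$ evaluated at $y$, and once with base point $y$ and subgradient $v\in\partial f(y)$ evaluated at $x$, and add: the function values cancel and one is left with $0\ge\langle w-v,y-x\rangle+\sigma\|y-x\|^2$, i.e. (iii). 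For (ii)$\Rightarrow$(i), set $z=\lambda x+(1-\lambda)y$, pick any $v\in\partial f(z)$, apply (ii) at base point $z$ to both $x$ and $y$, and form the convex combination with weights $\lambda$ and $1-\lambda$; the linear terms vanish since $\lambda(x-z)+(1-\lambda)(y-z)=0$, and the quadratic terms combine, via $\lambda\|x-z\|^2+(1-\lambda)\|y-z\|^2=\lambda(1-\lambda)\|x-y\|^2$, into precisely the modulus term of (i).

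The main obstacle is (iii)$\Rightarrow$(ii), because monotonicity of the subdifferential at a single pair of points is too weak: a one-shot mean-value argument only yields $f(y)-f(x)\ge\langle v,y-x\rangle+\sigma\theta\|y-x\|^2$ with an uncontrolled $\theta\in\,]0,1[$, losing the constant $\tfrac{\sigma}{2}$. The remedy is to accumulate the monotonicity estimate along the entire segment. I would set $\phi(t)=f(x+t(y-x))$, which is convex and hence Lipschitz on $[0,1]$, so that $f(y)-f(x)=\phi(1)-\phi(0)=\int_0^1\phi'(t)\,dt$ by absolute continuity. For $t\in\,]0,1[$ and any $v_t\in\partial f(x+t(y-x))$, applying (iii) to the pair $(x+t(y-x),x)$ and dividing by $t$ gives $\langle v_t-v,y-x\rangle\ge\sigma t\|y-x\|^2$; taking the minimum over $v_t$ identifies the left derivative and shows $\phi'(t)\ge\langle v,y-x\rangle+\sigma t\|y-x\|^2$ at every point of differentiability. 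Integrating over $[0,1]$ delivers $f(y)-f(x)\ge\langle v,y-x\rangle+\frac{\sigma}{2}\|y-x\|^2$, which is (ii). Combining, (i)$\Rightarrow$(ii)$\Rightarrow$(iii) and (iii)$\Rightarrow$(ii)$\Rightarrow$(i) close all equivalences.

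Optionally I would note a cleaner but less self-contained route: $f$ is strongly convex with modulus $\sigma$ if and only if $f-\tfrac{\sigma}{2}\|\cdot\|^2$ is convex, after which (i), (ii), (iii) translate, respectively, into convexity, the subgradient inequality, and monotonicity of the subdifferential for this shifted function, reducing the theorem to the corresponding $\sigma=0$ facts.
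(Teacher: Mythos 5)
Your proof is correct. Note, however, that the paper does not prove this statement at all: it is quoted verbatim as a known result from Beck's book (Theorem 5.24 there) and used as a black box in the preliminaries, so there is no in-paper argument to compare against. Your cycle of implications, including the integration of the subdifferential monotonicity estimate along the segment to recover the factor $\tfrac{\sigma}{2}$ in (iii)$\Rightarrow$(ii), is the standard textbook route and is sound; the only blemish is the phrase ``weight $1-t$ on $y$'' in (i)$\Rightarrow$(ii), since the point $x+t(y-x)=(1-t)x+ty$ carries weight $t$ on $y$, but the displayed inequality you derive from it is the correct one, so nothing is affected.
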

\begin{definition}[{\cite[p. 107]{Amir}}]\label{def.lipschtz}
A differentiable function $f\colon\mathbb{R}^n \to \mathbb{R} $ has gradient Lipschitz continuous with constant $L >0$ whenever $\|\nabla f(x) -\nabla f(y)\|\leq L\|x-y\|$, for all $x,y \in \mathbb{R}^{n}$.
\end{definition}
\begin{lemma}[Descent lemma {\cite[Lemma 5.7, p. 109]{Amir}}]\label{eq:IneqLip}
Assume that $f$ satisfies Definition~\ref{def.lipschtz}. Then, for all $x, d\in \mathbb{R}^n$ and all $\lambda \in \mathbb{R}$, there holds $f\left(x+ \lambda d\right) \leq f(x) +\lambda \left\langle \nabla f(x), d \right\rangle + L\lambda^2 \|d\|^2/2$.
\end{lemma}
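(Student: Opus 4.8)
The plan is to reduce the multivariate inequality to a one-dimensional integral identity along the segment joining $x$ to $x+\lambda d$, and then to control the resulting error term using the Lipschitz bound from Definition~\ref{def.lipschtz}. First I would introduce the auxiliary function $\varphi\colon[0,1]\to\mathbb{R}$ defined by $\varphi(t)=f(x+t\lambda d)$. Since $f$ is differentiable, $\varphi$ is differentiable with $\varphi'(t)=\lambda\langle\nabla f(x+t\lambda d),d\rangle$, and the fundamental theorem of calculus yields
$$f(x+\lambda d)-f(x)=\varphi(1)-\varphi(0)=\int_0^1\lambda\langle\nabla f(x+t\lambda d),d\rangle\,dt.$$

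Next I would isolate the desired linear term by writing $\lambda\langle\nabla f(x),d\rangle=\int_0^1\lambda\langle\nabla f(x),d\rangle\,dt$ and subtracting it from the identity above, obtaining
$$f(x+\lambda d)-f(x)-\lambda\langle\nabla f(x),d\rangle=\int_0^1\lambda\langle\nabla f(x+t\lambda d)-\nabla f(x),d\rangle\,dt.$$
The key estimate then follows from the Cauchy--Schwarz inequality applied to the integrand together with the Lipschitz continuity of $\nabla f$: for each $t\in[0,1]$ we have $\|\nabla f(x+t\lambda d)-\nabla f(x)\|\le L\|t\lambda d\|=Lt|\lambda|\,\|d\|$, so the integrand is bounded in absolute value by $|\lambda|\cdot Lt|\lambda|\,\|d\|\cdot\|d\|=Lt\lambda^2\|d\|^2$.

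Integrating this bound over $[0,1]$ and using $\int_0^1 t\,dt=1/2$ produces exactly the right-hand side $L\lambda^2\|d\|^2/2$, which completes the argument. There is no genuine obstacle here; the only point requiring a little care is that $\lambda$ is allowed to be an arbitrary real number, possibly negative, so I would keep the factor $|\lambda|$ explicit through the Cauchy--Schwarz step and rely on the fact that the parameter $t$ ranges over $[0,1]$ (hence $t\ge 0$) to write $\|t\lambda d\|=t|\lambda|\,\|d\|$. Since the final bound depends only on $\lambda^2$, the sign of $\lambda$ is immaterial in the conclusion.
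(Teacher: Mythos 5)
Your proof is correct and is the standard argument; note that the paper does not prove this lemma at all but simply cites it from Beck's book, where the proof given is essentially the same fundamental-theorem-of-calculus computation you describe (integrate $\langle\nabla f(x+t\lambda d),d\rangle$ over $t\in[0,1]$, subtract the linear term, and bound the remainder via Cauchy--Schwarz and the Lipschitz condition, using $\int_0^1 t\,dt=1/2$). Your care with $|\lambda|$ for negative $\lambda$ is appropriate and the conclusion indeed depends only on $\lambda^2$.
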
 
\begin{theorem}[{\cite[p. 38-39]{clarke1983optimization}}]\label{subdif_DC}
Let $g:\mathbb{R}^n \to \mathbb{R} $ be a locally Lipschitz and differentiable function and $h:\mathbb{R}^n \to \mathbb{R}$ be a convex function. Then, for every $x\in \mathbb{R}^{n},$ we have $\partial _{c}(g-h)(x) =\{\nabla g(x)\}-\partial h(x)$

\end{theorem}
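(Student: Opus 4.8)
The plan is to reduce the computation of $\partial_{c}(g-h)(x)$ to three ingredients: a reflection identity for $-h$, the identification of the Clarke subdifferential of the convex function $h$ with its convex subdifferential, and an exact sum rule that becomes available because $g$ is differentiable. Writing $g-h=g+(-h)$, the goal is to show that the Clarke subdifferential of this sum splits as $\partial_{c}g(x)+\partial_{c}(-h)(x)$ and then to evaluate each summand separately.

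First I would establish the reflection identity $\partial_{c}(-h)(x)=-\partial_{c}h(x)$. This follows directly from the definition of the generalized directional derivative: a change of variables $u\mapsto u+td$ inside the $\limsup$ shows that $(-h)^{\circ}(x;d)=h^{\circ}(x;-d)$ for every $d$, and substituting this into $\partial_{c}(-h)(x)=\{v : (-h)^{\circ}(x;d)\ge\langle v,d\rangle\ \forall d\}$, together with the replacement $d\mapsto-d$, yields $v\in\partial_{c}(-h)(x)$ if and only if $-v\in\partial_{c}h(x)$. Since $h$ is convex, the excerpt already gives $\partial_{c}h(x)=\partial h(x)$, so $\partial_{c}(-h)(x)=-\partial h(x)$. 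Next, because $g$ is differentiable, $\partial_{c}g(x)$ is the singleton $\{\nabla g(x)\}$ and $g^{\circ}(x;\cdot)=\langle\nabla g(x),\cdot\rangle$ is linear; invoking Clarke's sum rule in the form that holds with equality when one summand is differentiable, I obtain $\partial_{c}(g+(-h))(x)=\{\nabla g(x)\}+\partial_{c}(-h)(x)$. Assembling the pieces then gives $\partial_{c}(g-h)(x)=\{\nabla g(x)\}-\partial h(x)$, as claimed.

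The main obstacle is the passage from the general Clarke sum rule, which only yields the inclusion $\partial_{c}(f_{1}+f_{2})(x)\subseteq\partial_{c}f_{1}(x)+\partial_{c}f_{2}(x)$, to the equality needed here. The reverse inclusion is exactly where differentiability of $g$ is essential: because $\partial_{c}g(x)$ is a singleton and $g^{\circ}(x;\cdot)$ is linear, the subadditivity $(g+(-h))^{\circ}(x;d)\le g^{\circ}(x;d)+(-h)^{\circ}(x;d)$ can be reversed into an identity, which forces equality of the subdifferentials. I would take care to use the differentiability of $g$ precisely at this point, since mere pointwise differentiability of $g$ at $x$ controls only the ordinary directional derivative $g'(x;\cdot)$ and not the generalized one $g^{\circ}(x;\cdot)$; in the locally Lipschitz setting considered here, this is where the hypothesis on $g$ does its real work.
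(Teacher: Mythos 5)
The paper does not actually prove this statement; it is quoted directly from Clarke's book, so there is no in-paper argument to compare against. Your overall route --- write $g-h=g+(-h)$, establish the reflection identity $\partial_{c}(-h)(x)=-\partial_{c}h(x)=-\partial h(x)$, and then apply the sum rule, which holds with equality because one summand has a singleton Clarke subdifferential and a linear generalized directional derivative --- is exactly the standard derivation of this identity from Clarke's calculus, and your reflection step is carried out correctly.

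There is, however, a genuine gap at the step you yourself single out as critical. You assert that, because $g$ is differentiable and locally Lipschitz, $\partial_{c}g(x)=\{\nabla g(x)\}$ and $g^{\circ}(x;\cdot)=\langle\nabla g(x),\cdot\rangle$. This is false: for a locally Lipschitz function, pointwise differentiability at $x$ gives only $\nabla g(x)\in\partial_{c}g(x)$ and $g'(x;\cdot)\leq g^{\circ}(x;\cdot)$. The classical counterexample is $g(t)=t^{2}\sin(1/t)$, $g(0)=0$, which is Lipschitz near $0$ and differentiable everywhere with $\nabla g(0)=0$, yet $\partial_{c}g(0)=[-1,1]$ and $g^{\circ}(0;1)=1$; taking $h\equiv 0$ this even falsifies the theorem as literally stated. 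What is actually needed --- and what Clarke's result on pp.~38--39 assumes --- is \emph{strict} differentiability of $g$ at $x$, which for locally Lipschitz $g$ is equivalent to $\partial_{c}g(x)$ being a singleton and which does yield $g^{\circ}(x;\cdot)=\langle\nabla g(x),\cdot\rangle$, so that the reverse inclusion in the sum rule goes through exactly as you describe. Your closing remark correctly flags the distinction between $g'(x;\cdot)$ and $g^{\circ}(x;\cdot)$ but then resolves it the wrong way: local Lipschitzness does not upgrade pointwise differentiability to strict differentiability. In fairness, the paper's own wording of the theorem is loose on the same point; the result is only ever invoked under assumption \ref{it:gdiff}, where $g$ is continuously differentiable and hence strictly differentiable, so the intended application is sound. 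To repair your argument, strengthen ``differentiable'' to ``strictly (or continuously) differentiable'' at the key step and invoke Clarke's characterization of strict differentiability to justify $\partial_{c}g(x)=\{\nabla g(x)\}$.
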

\section{Non-monotone BDCA} \label{sec3}
In this section we introduce the non-monotone boosted DC algorithm (nmBDCA) to solve \eqref{Pr:DCproblem}. Throughout the paper we need the following two assumptions:
\begin{enumerate}[label={(\textbf{H\arabic*})}, ref={(H\arabic*)}]
	\item\label{it:ghstronglyconvex}
		$g,h:\mathbb{R}^{n}\to \mathbb{R}$ are both strongly convex functions with modulus $\sigma>0$;
	\item\label{it:phistarinf}
		$\phi ^{*}:=\inf _{x\in \mathbb{R}^n} \{ \phi (x)=g(x)-h(x) \}>-\infty .$
\end{enumerate}
Before proceeding with our study let us first discuss the assumptions \ref{it:ghstronglyconvex} and \ref{it:phistarinf} in next remark.
\begin{remark}\label{infinitydecomp}
 We first note that \ref{it:ghstronglyconvex} is not restrictive. Indeed, given two convex functions $g$ and $h$ we can add to both a strongly convex term $({\sigma}/{2})\| x \|^{2}$ to obtain $\phi(x):=(g(x)+{\sigma}/{2}\| x \|^{2}) - (h(x)+{\sigma}/{2}\| x \|^{2})$. Hence, $\phi$ is rewritten as a difference of two strongly convex functions with modulus $\sigma>0$. Assumption \ref{it:phistarinf} is usual in the context of DC programming, see e.g. \cite{ARAGON2017,ARAGON2019,CruzNetoEtAl2018}.
\end{remark}

Let us present the conceptual statement of BDCA, but first we recall the DCA given in Algorithm~\ref{Alg:DCA}.

\begin{algorithm}[h]
	\caption{The DC Algorithm (DCA)}\label{Alg:DCA}
	\begin{algorithmic}[1]
	\STATE { Choose an initial point $x^0\in \mathbb{R}^{n}$. Set $k=0$.}
	\STATE{Choose $w^{k}\in\partial h(x^{k})$ and compute $y^{k}$ the solution of the following problem
	\begin{equation} \label{eq:DCAS}
	\min _{x\in \mathbb{R}^{n}} g(x)-\left \langle w^{k},x-x^{k} \right\rangle.
	\end{equation}}
	\STATE{If $y^{k}=x^{k}$, then STOP and return $x^{k}$. Otherwise, set $x^{k+1}:=y^{k}$, $k \leftarrow k+1$ and go to Step~2.}
	\end{algorithmic}
\end{algorithm}

Note that, in the DCA, if $d^k\neq 0$, then the next iterate is $x^{k+1} = y^k$.
In this case, under the assumption~\ref{it:ghstronglyconvex}, it can be proved that $\phi (y^{k})< \phi (x^{k})-\sigma \|d^k\| ^{2}$, see for example \cite[Proposition~3.1]{ARAGON2019}. The conceptual monotone BDCA is given in Algorithm~\ref{Alg:BDCAM}.

\begin{algorithm}[h]
	\caption{The monotone boosted DC Algorithm (BDCA)}\label{Alg:BDCAM}
	\begin{algorithmic}[1]
	\STATE {Fix $\lambda _{-1}>0$, $\rho>0$ and $\zeta \in (0,1)$. Choose an initial point $x^0\in \mathbb{R}^{n}$. Set $k=0$.}
	\STATE{Select $w^{k}\in\partial h(x^{k})$ and compute $y^{k}$ the solution of the following problem
	\begin{equation} \label{eq:BDCAMu}
	\min _{x\in \mathbb{R}^{n}} g(x)-\left \langle w^{k},x-x^{k} \right\rangle.
	\end{equation}}
	\STATE{Set $ d^{k}:=y^{k}-x^{k}$. If $d^{k} =0$, then STOP and return the point $x^{k}$. Otherwise, set $\lambda _{k}:= \zeta^{j_k}\lambda_{k-1}$, where
	\begin{equation} \label{eq:BDCAjku}
	 j_k:=\min \big\{j\in {\mathbb N}: ~\phi( y^{k}+\zeta^{j} \lambda _{k-1}d^{k})\leq \phi (y^{k})-\rho \left(\zeta^{j}{\lambda _{k-1}}\right)^{2}\| d^{k}\| ^{2}\big\}.
	\end{equation}}
	\STATE{Set $x^{k+1}:=y^{k}+\lambda _{k}d^{k}$; set $k \leftarrow k+1$ and go to Step~2.}
	\end{algorithmic}
\end{algorithm}

It is known that a necessary condition for a point $x$ to be local minimizer of $\phi$ is that $\partial h(x)\subseteq \partial g(x)$; see \cite{Toland}. In this case, $x$ is called inf-stationary. Such a condition is not easy to verify and hence a relaxed form of inf-stationary has been considered in DC literature.

\begin{definition}
 A point $x^{*}\in \mathbb{R}^{n}$ is critical of $\phi(x)=g(x)-h(x)$ as in \eqref{Pr:DCproblem} if $$\partial g(x^{*}) \cap \partial h(x^{*}) \neq~\emptyset.$$
\end{definition}

From subdifferential calculus, we have that $\partial_c \phi(x)\subseteq \partial g(x)-\partial h(x)$; see \cite{clarke1983optimization}. Thus, criticality is a weaker condition than Clarke stationary, i.e., $0\in \partial _{c} \phi(x)$. Reference \cite{KaisaBagirov2018} pointed out some interesting relationships between inf-stationary, Clarke stationary and critical points. Namely, inf-stationarity implies Clarke stationarity and Clarke stationary implies critical point. However, the converse of these implications do not hold without some extra assumptions. In other words, it is possible that a critical point is neither a local optimum nor a saddle point of the objective $\phi$. Therefore, the quality of the solution found by an algorithm is something that must to be discussed. 

\subsection{The algorithm}
Next, we formally introduce our non-monotone version of BDCA to solve~\eqref{Pr:DCproblem}.

\begin{algorithm}[h]
\caption{Non-monotone boosted DC Algorithm (nmBDCA)} \label{Alg:ASSPM}
\begin{algorithmic}[1]
\STATE {Fix $\lambda _{-1}>0$, $\rho>0$ and $\zeta \in (0,1)$. Choose an initial point $x^0\in \mathbb{R}^{n}$. Set $k=0$.}
\STATE{Select $w^{k}\in\partial h(x^{k})$ and compute $y^{k}$ the solution of the following problem
\begin{equation} \label{eq:ASSPMu}
\min _{x\in \mathbb{R}^{n}} \psi _{k}(x):= g(x)-\left \langle w^{k},x-x^{k} \right\rangle.
\end{equation}}
\STATE{Set $ d^{k}:=y^{k}-x^{k}$. If $d^{k} =0$, then STOP and return $x^{k}$. Otherwise, take $\nu _{k}\in \mathbb{R}_{+}$ (to be specified later) and set $\lambda _{k}:= \zeta^{j_k}\lambda_{k-1}$, where
\begin{equation} \label{eq:jku}
 j_k:=\min \big\{j\in {\mathbb N}: ~\phi( y^{k}+\zeta^{j} \lambda _{k-1}d^{k})\leq \phi (y^{k})-\rho \left(\zeta^{j}{\lambda _{k-1}}\right)^{2}\| d^{k}\| ^{2} + \nu _{k}\big\}.
\end{equation}}
\STATE{Set $x^{k+1}:=y^{k}+\lambda _{k}d^{k}$; set $k \leftarrow k+1$ and go to Step~2.}
\end{algorithmic}
\end{algorithm}

From now on, we denote by $(x^k)_{k\in\mathbb{N}}$ the sequence generated by Algorithm~\ref{Alg:ASSPM} which is a non-monotone version of BDCA for solving \eqref{Pr:DCproblem} with both DC components possibly non-differentiable. It is worth to mention that we will study the convergence analysis of Algorithm~\ref{Alg:ASSPM} for both cases where $g$ is differentiable and non-differentiable. If $g$ is non-differentiable, we will suppose that $\nu_k>0$, for all $k\in\mathbb{N}$, and hence, we will extend the results of BDCA~\cite{ARAGON2017,ARAGON2019} to the non-differentiable setting. If $g$ is differentiable, we will suppose that $\nu_k\geq 0$, for all $k\in\mathbb{N}$. In this case, if $\nu _{k}= 0$, for all $k\in\mathbb{N}$, then the non-monotone line search \eqref{eq:jku} coincides with the monotone line search \eqref{eq:BDCAjku}. Otherwise, if $\nu _{k}> 0$, for all $k\in\mathbb{N}$, then nmBDCA can be viewed as an inexact version of BDCA.

Before, recall that \eqref{eq:ASSPMu} always has a unique solution $y^{k}$, which is characterized by 
\begin{equation}\label{eq:charyk}
w^{k}\in \partial g (y^{k}), \qquad \forall k\in \mathbb{N}.
\end{equation} 

It is remarked in \cite[Remark 1]{ARAGON2017} (differentiable case) and \cite[Example 3.4]{ARAGON2019} (non-differentiable case) that the differentiability of the DC component $g$ is necessary to apply the boosted technique proposed by the authors. The next example shows that the search direction $d^k\neq 0$ used by Algorithm~\ref{Alg:ASSPM} can be an ascent direction at the point $y^k$. Consequently, the line search in usual BDCA proposed in \cite{ARAGON2017,ARAGON2019} cannot be performed. However, the non-monotone line search proposed in Algorithm~\ref{Alg:ASSPM} overcome this drawback as we will illustrate in the sequel. 
\begin{example}{\cite[Example 3.4]{ARAGON2019}}\label{ExampleAlgorithm}
Consider the problem \eqref{Pr:DCproblem}, where the functions $g$ and $h$ are given, respectively, by 
\begin{equation*}
	g(x_1,x_2)=-\frac{5}{2}x_1+x_1^2+x_2^2+|x_1|+|x_2|, \qquad h(x_1,x_2)=\frac{1}{2}(x_1^2+x_2^2).
\end{equation*}
The function $\phi(x_1,x_2) = g(x_1,x_2)-h(x_1,x_2)=\frac{1}{2}(x_1^2+x_2^2)+|x_1|+|x_2|-\frac{5}{2}x_1$ has only one critical point (global minimum) at $x^*=(1.5, 0)$. Clearly, $g$ is a non-differentiable function. Some calculations show that, letting $x^0=(\frac{1}{2},1)$, we have that $w^0=(\frac{1}{2},1)$, $y^0=(1,0)$ is the solution of \eqref{eq:ASSPMu} and $d^0=(\frac{1}{2}, -1)$. We can check that the directional derivative of $\phi$ at $y^0$ in the direction of $d^0$ is $\phi'(y^0, d^0)=\frac{3}{4}$. Thus, $d^0$ is not a descent direction for $\phi$ at $y^0$. Indeed, due to $\phi(y^0)=-1$ and $\phi(y^0+\lambda d^0) =-1+\frac{3}{4}\lambda+\frac{5}{8}\lambda^2$, we conclude that $\phi(y^0+\lambda d^0)>\phi(y^0)$, for all $\lambda>0$. Hence, a usual monotone line search cannot be performed. On the other hand, 
\begin{equation*}
	\phi(y^0+\lambda d^0)-\phi(y^0)+\rho \lambda ^{2}\| d^{0}\| ^{2}= \frac{3}{4}\lambda+\frac{5}{8}\lambda^2 + \frac{5}{4} \rho \lambda ^{2}, 
\end{equation*}
and $ \lim_{\lambda \to 0^+}\left(\frac{3}{4}\lambda+\frac{5}{8}\lambda^2 + \frac{5}{4} \rho \lambda ^{2}\right)=0$. Thus, for $\nu_0>0$ there exists $\delta_0>0$ such that $\phi(y^0+\lambda d^0)-\phi(y^0)+\rho \lambda ^{2}\| d^{0}\| ^{2}<\nu_0$, for all $\lambda\in (0, \delta_0)$. Therefore, the non-monotone line search \eqref{eq:jku} can be performed; see Figure~\ref{fig1a}. Using $\lambda _{-1}=1$, $\rho=0.1$ and $\zeta = 0.5$ one has that although $f(x^{k+1})$ does not decrease the corresponding iteration in DCA (namely, $f(y^k)$), for all $k$ (see Figure~\ref{fig1b}), Algorithm~\ref{Alg:ASSPM} still has a better performance than DCA as we can see in Figure~\ref{fig1c}. Both algorithms return the global minimum $x^*=(1.5, 0)$ with Algorithm~\ref{Alg:ASSPM} requiring 6 iterates while DCA computing 17 iterates until the stopping rule is satisfied. We will return to this example with more details in Section~\ref{Sec6}.
\begin{figure*}[tbp]
\centering
\subfloat[Iterations]{\label{fig1a}\includegraphics[width=0.32\linewidth]{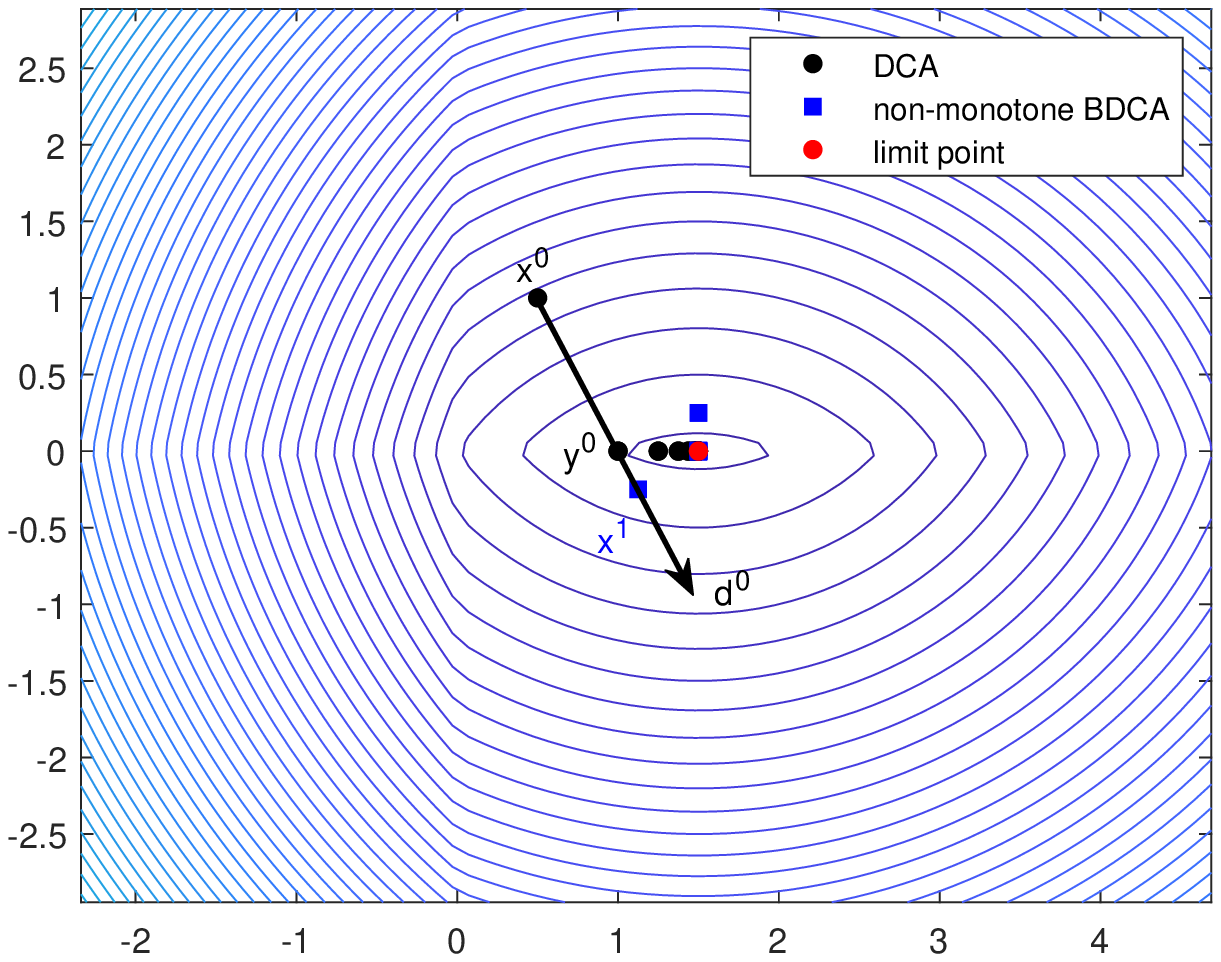}}
\subfloat[Values of $\phi(x^k)$]{\label{fig1b}\includegraphics[width=0.32\linewidth]{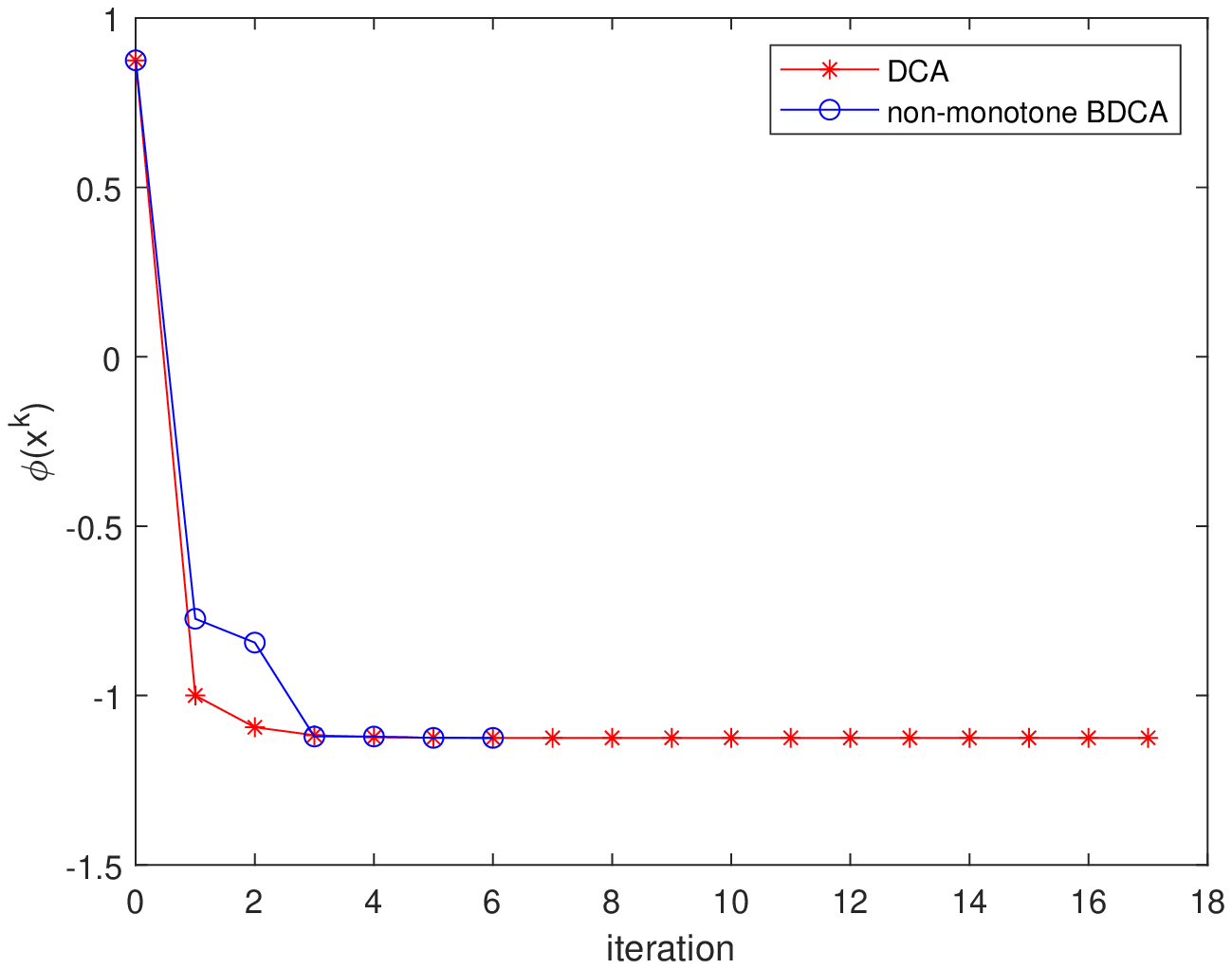}}
\subfloat[Values of $\lVert d^k\rVert$]{\label{fig1c}\includegraphics[width=0.32\linewidth]{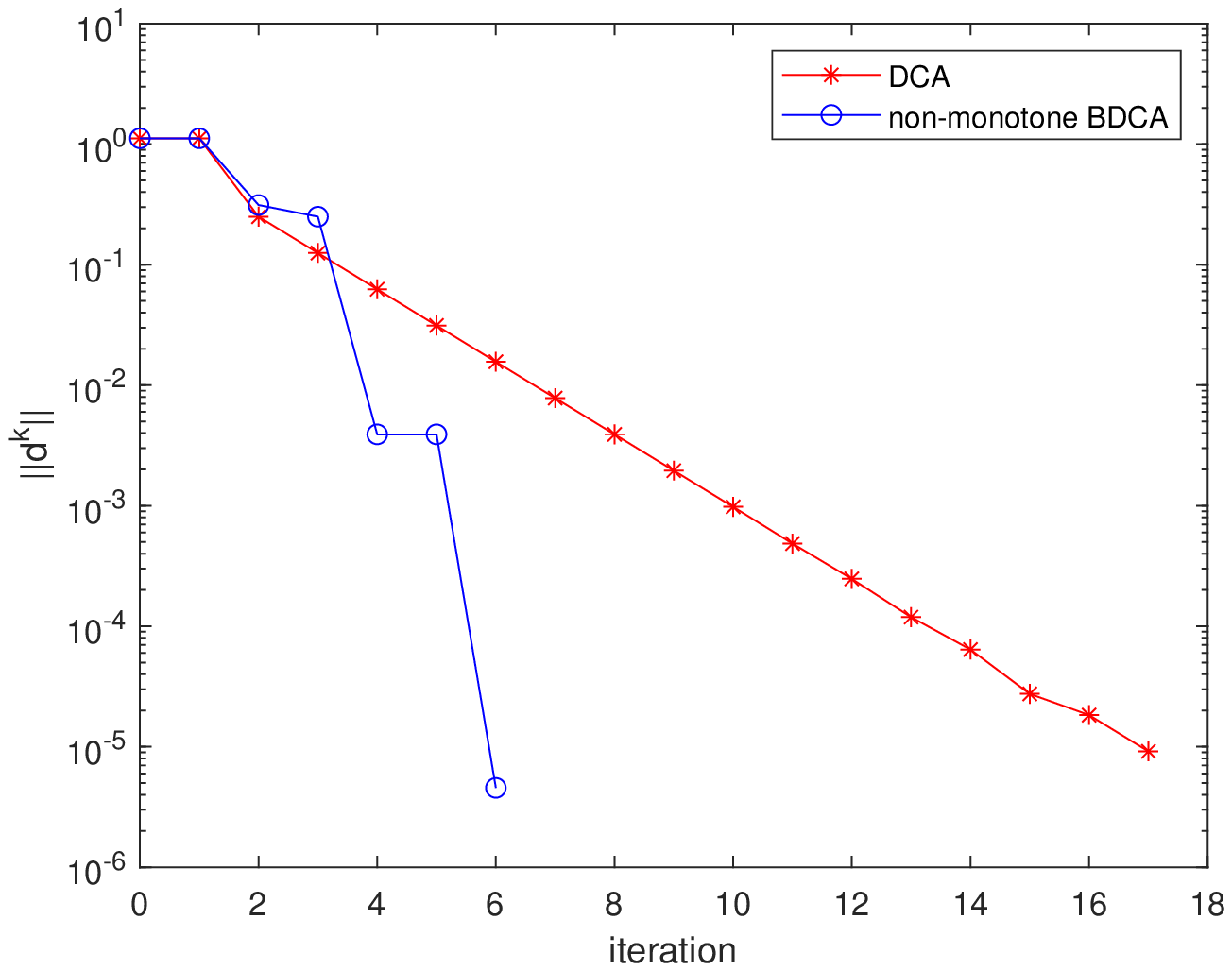}}
\caption{Illustration of Example~\ref{ExampleAlgorithm} for both DCA (Algorithm~\ref{Alg:DCA}) and nmBDCA (Algorithm~\ref{Alg:ASSPM}).}
\end{figure*}
\end{example}
In the previous example, we illustrated how a non-monotone line search \eqref{eq:jku} can be performed in BDCA. In fact, in next section, we will show that in general the non-monotone line search \eqref{eq:jku} can be performed. We end this section with a basic result in the study of DCA, see for example \cite[Proposition 2]{TaoLe1997}. In particular, it shows that the solution of Problem~\eqref{eq:ASSPMu}, which coincides with the solution of Problems~\eqref{eq:DCAS} and \eqref{eq:BDCAMu} in Algorithms~\ref{Alg:DCA} and \ref{Alg:BDCAM} respectively, provides a decrease in the value of the objective function $\phi$. For the sake of completeness, we include its proof here.
\begin{proposition}\label{pr:ffr}
	For each $k\in\mathbb{N}$, the following statements hold:
	\begin{enumerate}[label={({\roman*})}, ref={(\roman*)}]
		\item\label{it:ffr:critical}
			If $d^{k}=0$, then $x^{k}$ is a critical point of $\phi$;
		\item\label{it:ffr:ineq}
			There holds $\phi (y^{k})\leq \phi (x^{k})-\sigma \| d^{k}\| ^{2}$.
	\end{enumerate}
\end{proposition}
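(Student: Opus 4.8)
The plan is to treat the two parts separately: part~\ref{it:ffr:critical} is an immediate consequence of the optimality characterization of $y^k$, while part~\ref{it:ffr:ineq} follows from a double application of strong convexity in which a shared subgradient makes the cross terms cancel.

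For part~\ref{it:ffr:critical}, I would argue directly. If $d^k=0$, then $y^k=x^k$, and the characterization~\eqref{eq:charyk} of the unique solution of~\eqref{eq:ASSPMu} gives $w^k\in\partial g(y^k)=\partial g(x^k)$. On the other hand, by the selection made in Step~2 we have $w^k\in\partial h(x^k)$. Hence $w^k\in\partial g(x^k)\cap\partial h(x^k)$, so this intersection is nonempty and $x^k$ is a critical point of $\phi$ in the sense of the definition preceding the proposition.

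For part~\ref{it:ffr:ineq}, the key observation is that the same vector $w^k$ lies in $\partial g(y^k)$ (by~\eqref{eq:charyk}) and in $\partial h(x^k)$ (by construction); this common value is exactly what forces the inner-product terms to cancel. Using assumption~\ref{it:ghstronglyconvex} together with Theorem~\ref{teo2}\ref{it:teo2:fct-geq}, I would first apply the strong-convexity inequality for $g$ with base point $y^k$ and subgradient $w^k\in\partial g(y^k)$, evaluated at $x^k$, and rearrange it (using $x^k-y^k=-d^k$) to obtain the upper bound $g(y^k)\leq g(x^k)+\langle w^k,d^k\rangle-\tfrac{\sigma}{2}\|d^k\|^2$. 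Next I would apply the same inequality for $h$ with base point $x^k$ and subgradient $w^k\in\partial h(x^k)$, evaluated at $y^k$, yielding the lower bound $h(y^k)\geq h(x^k)+\langle w^k,d^k\rangle+\tfrac{\sigma}{2}\|d^k\|^2$. Recalling $\phi=g-h$ and subtracting the second estimate from the first, the two terms $\langle w^k,d^k\rangle$ cancel while the two terms $\tfrac{\sigma}{2}\|d^k\|^2$ add, which gives precisely $\phi(y^k)\leq\phi(x^k)-\sigma\|d^k\|^2$.

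There is no genuine obstacle here beyond careful bookkeeping: the only point requiring attention is orienting the two subgradient inequalities so that the inner-product contributions cancel rather than reinforce one another, and this is the step where the fact that $w^k$ belongs simultaneously to $\partial g(y^k)$ and $\partial h(x^k)$ is essential. Note also that strong convexity is used twice, once for each DC component, so that the two $\tfrac{\sigma}{2}\|d^k\|^2$ contributions combine into the full factor $\sigma\|d^k\|^2$ appearing in the statement.
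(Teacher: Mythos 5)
Your proposal is correct and follows essentially the same route as the paper's own proof: part (i) via the characterization $w^k\in\partial g(y^k)$ combined with $w^k\in\partial h(x^k)$, and part (ii) via the two strong-convexity inequalities of Theorem~\ref{teo2}\,\ref{it:teo2:fct-geq} for $g$ at $y^k$ and for $h$ at $x^k$, both with the shared subgradient $w^k$, so that the inner products cancel and the two $\tfrac{\sigma}{2}\|d^k\|^2$ terms add. No discrepancies to report.
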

\begin{proof} 
Before starting the proof, we remind that $d^{k}=y^{k}-x^{k}$. To prove item~\ref{it:ffr:critical} recall that due to $y^{k}$ being the solution of \eqref{eq:ASSPMu}, it satisfies \eqref{eq:charyk}. Thus, if $d^{k}=0$, then $y^{k}=x^{k}$ and $w^k\in \partial g (x^{k})\cap \partial h (x^{k})\not=\emptyset$. Consequently, $x^k$ is a critical point of $\phi$. To prove item~\ref{it:ffr:ineq}, take $w^{k}\in\partial h(x^{k})$.
Now, we use the strong convexity of $g$ with modulus $\sigma>0$ and Theorem~\ref{teo2}\,\ref{it:teo2:fct-geq} to conclude that 
\begin{equation*}
	g(x^k)\geq g(y^k) - \langle v, d^k \rangle + \frac{\sigma}{2} \| d^k\|^{2}, \qquad \forall v\in \partial g(y^k).
\end{equation*}
Thus, due to $y^{k}$ being the solution of Problem~\eqref{eq:ASSPMu}, \eqref{eq:charyk} and the last inequality, we have
\begin{equation} \label{eq:ffrs}
g(x^k)\geq g(y^k) - \langle w^k, d^k \rangle + \frac{\sigma}{2} \|d^k\|^{2}.
\end{equation}
We also know that $h$ satisfies \ref{it:ghstronglyconvex}, i.e., $h$ is strong convex with modulus $\sigma>0$. Thus, since $w^{k}\in\partial h(x^{k})$, it follows from Theorem~\ref{teo2}\,\ref{it:teo2:fct-geq} that
\begin{equation} \label{eq:ffrsa}
h(y^k)\geq h(x^k) + \langle w^k, d^k \rangle + \frac{\sigma}{2} \| d^k\|^{2}.
\end{equation}
Thus, adding \eqref{eq:ffrs} and \eqref{eq:ffrsa} we have that $g(x^k)+h(y^k)\geq g(y^k)+h(x^k) + {\sigma}\| d^k\|^{2}$. Hence, using the definition of $\phi$ in \eqref{Pr:DCproblem} we conclude that $
\phi(x^k)\geq \phi(y^k) + {\sigma}\|d^k\|^{2}$, which is equivalente to the desired inequality finishing the proof of item~\ref{it:ffr:ineq}. 
\end{proof}
\subsection{Strategies to choose $\nu_k$}

 Next, we discuss some strategies to choose the sequence of parameters $(\nu_{k})_{k\in \mathbb{N}}$. We emphasize that throughout the paper each one of the following strategies will be used separately and only when explicitly stated:
\begin{enumerate}[label={(\textbf{S\arabic*})}, ref={(S\arabic*)}]
	\item \label{it:deltamin}
	Given $\delta _{min}\in [0,1)$, the sequence $(\nu _{k})_{k\in \mathbb{N}}\subset\mathbb{R}_{+}$ is defined as follows: $\nu _{0}\geq 0$ and $\nu_{k+1}$, for each $ \delta _{k+1} \in [\delta _{min},1]$, satisfies the following condition
\begin{equation}\label{nmr1}
			0 \leq \nu_{k+1} \leq (1-\delta _{k+1})(\phi (x^{k})- \phi (x^{k+1})+\nu _{k}), \quad \forall k\in \mathbb{N}.
		\end{equation}
	\item \label{it:nusummable}
		$(\nu _{k})_{k\in \mathbb{N}}\subset\mathbb{R}_{+}$ is such that $\sum _{k=0}^{+\infty}\nu _{k} <+\infty$;
	\item \label{it:nubound}
		$(\nu _{k})_{k\in \mathbb{N}}\subset\mathbb{R}_{+}$ is such that for every $\delta>0$, there exists $k_{0}\in \mathbb{N}$ such that $\nu _{k}\leq \delta \|d^{k}\|^{2},$ for all $k\geq k_{0}$.
\end{enumerate}

\begin{remark} \label{re:pip}
First note that, by using Proposition~\ref{prop15u}\,\ref{it:prop15u:decrease}, we have $ 0\leq \sigma \|d^{k}\|^{2} \leq \phi (x^{k})-\phi (x^{k+1})+\nu _{k}$, for all $k\in \mathbb{N}$. Thus, we can take $\nu_{k+1}\geq 0$ satisfying \eqref{nmr1}. Furthermore, if $(\nu _{k})_{k\in \mathbb{N}}$ satisfies \ref{it:deltamin} with $\delta_{min}>0$, then $(\nu _{k})_{k\in \mathbb{N}}$ also satisfies \ref{it:nusummable}. Indeed, it follows from \eqref{nmr1} that 
\begin{equation} \label{eq:ccsa}
0\leq \delta _{k+1}(\phi (x^{k})-\phi (x^{k+1})+\nu _{k}) \leq (\phi (x^{k})+\nu _{k})- (\phi (x^{k+1})+\nu _{k+1}).
\end{equation} 
Since $\delta _{k+1}\geq\delta _{min}>0$, $\phi (x^{k})-\phi (x^{k+1})+\nu _{k}\geq 0$, for all $k\in \mathbb{N}$, and $\phi$ satisfies \ref{it:phistarinf} we obtain $\delta _{min} \sum _{k=0}^{N} (\phi (x^{k})-\phi (x^{k+1})+\nu _{k}) \leq \phi (x^{0})-\phi ^{*} + \nu _{0}<\infty$. Hence, due to $\nu _{k+1}\leq (1-\delta _{min})(\phi (x^{k})-\phi (x^{k+1})+\nu _{k})$ for all $k\in \mathbb{N}$, we have
$\sum _{k=0}^{+\infty}\nu _{k} < \nu_0+ ((1-\delta _{min})/\delta _{min})(\phi (x^{0})-\phi ^{*} + \nu _{0})<\infty$. Therefore, $(\nu _{k})_{k\in \mathbb{N}}$ satisfies \ref{it:nusummable} and the claim is proved.
\end{remark}

Although strategy \ref{it:deltamin} seems to be theoretical, we will see in the sequel a practical and efficient example satisfying this condition. A sequence $(\nu _{k})_{k\in \mathbb{N}}$ satisfying \ref{it:nusummable} is simple and exogenous, i.e., it can be taken {\it a priori}. Finally, at a first glance, strategy \ref{it:nubound} seems to be a strong condition, but actually there are simple examples of sequences $(\nu _{k})_{k\in \mathbb{N}}$ satisfying \ref{it:nubound} which are easy to implement numerically. 

Alternatively, we can consider the following strategy:
\begin{enumerate}[label={(\textbf{S3'})}, ref={(S3')}]
\item \label{eq:nub2} Fix $\bar{\delta} \in (0, \sigma)$. There exists $k_{0}\in \mathbb{N}$ such that $\nu _{k}\leq \bar{\delta} \|d^{k}\|^{2},$ for all $k\geq k_{0}$.
\end{enumerate}
Since it changes according to $d^k$, in our point of view, it makes this strategy very interesting.

Next, we present some examples of sequences $(\nu_{k})_{k\in \mathbb{N}}$ according to the above strategies. 
\begin{example}
 Let us first recall the definition of the sequence of ``cost updates" $(C_k)_{k\in\mathbb{N}}$ that characterizes the non-monotone line search proposed in \cite{ZhangHager2004}. Consider $0\leq \eta_{min}\leq \eta_{max}<1$, $C_0 >\phi (x^0)$ and $Q_0 = 1$. Choose $\eta_k\in [\eta_{min}, \eta_{max}]$ and set 
\begin{equation} \label{eq:zhs}
Q_{k+1}:=\eta_kQ_{k}+1, \qquad C_{k+1} := ({\eta_k}Q_kC_k + \phi(x^{k+1}))/Q_{k+1}, \qquad \forall k \in \mathbb{N}.
\end{equation}
Note that, after some algebraic manipulations, we can show that \eqref{eq:zhs} is equivalent to 
$C_{k+1} = (1-1/Q_{k+1})C_{k}+\phi(x^{k+1})/Q_{k+1}$, for all $k \in \mathbb{N}$.
Thus, setting $\nu_{k}=C_k-\phi(x^k)$ and $\delta_{k+1}=1/Q_{k+1}$, we conclude that $\nu _{k+1} =(1-\delta _{k+1})(\phi (x^{k})-\phi (x^{k+1})+\nu _{k})$, for all $k \in \mathbb{N}$. Moreover, from \eqref{eq:zhs} we have $Q_{k+1}>1,$ for all $k\in \mathbb{N}$, and then $(1-\delta _{k+1})>0$ for all $k\in \mathbb{N}$. Since $\nu_0=C_0 -\phi (x^0)>0$, induction argument combined with Proposition~\ref{prop15u}\,\ref{it:prop15u:decrease} imply that $\nu _{k+1}>0$ for all $k\in \mathbb{N}$. Moreover, $(\nu _{k})_{k\in \mathbb{N}}$ satisfies \ref{it:deltamin}. It is worth noting that the non-monotone line search technique proposed in \cite{ZhangHager2004} outperforms the one proposed in \cite{Grippo1986} in many problems; see \cite[Section 4]{ZhangHager2004}.
\end{example}
\begin{example}
 Take any $\nu _{0}>0$, and define $\delta _{k+1}$ and $\nu _{k}$ as follows
\begin{equation}\label{eq:ndgeo}
0< \delta _{min}\leq \delta _{k+1}<1, \qquad 0< \nu _{k+1}:= (1-\delta _{k+1})(\sigma +\rho \lambda _{k}^{2})\|d^{k}\|^{2}, \qquad \forall k\in \mathbb{N}.
\end{equation}
Then Proposition~\ref{prop15u}\,\ref{it:prop15u:decrease} yields $(\sigma+\rho \lambda _{k}^{2})\|d^{k}\|^{2}\leq \phi (x^{k})-\phi (x^{k+1})+\nu _{k}.$ Thus, whenever $d^k\neq 0$, we have $0<\nu _{k+1} \leq (1-\delta _{k+1})(\phi (x^{k})-\phi (x^{k+1})+\nu_{k}).$ Therefore, $(\nu _{k})_{k\in \mathbb{N}}$ defined in \eqref{eq:ndgeo} satisfies \ref{it:deltamin}.
\end{example}
\begin{example} \label{eq:omega} 
Let $\omega>0$ be a constant. Then, the sequence $(\nu _{k})_{k\in \mathbb{N}}\subset\mathbb{R}_{++}$ defined by $ \nu _{k}:=\omega\|d^{k}\|^{2}/(k+1)$, for all $ k\in \mathbb{N}$, satisfies \ref{it:nubound}. Indeed, due to $\lim _{k\to\infty} \omega /(k+1)=0$, for every $\delta>0$, there exists $k_{0}\in \mathbb{N}$ such that $k\geq k_{0}$ implies that $ \omega/(k+1)\leq \delta $. Thus, we have that $ \nu _{k}\leq \delta \|d^{k}\|^{2}.$ Similarly, we can show that $(\nu _{k})_{k\in \mathbb{N}}\subset\mathbb{R}_{++}$ defined by $ \nu _{k}:=\omega\|d^{k}\|^{2}/\ln(k+2)$, for all $ k\in \mathbb{N}$, also satisfies \ref{it:nubound}. 
\end{example}
\begin{example} 
Take an integer $M>0$, set $m_{0}=0$ and for $k>0$ take $0\leq m_{k}\leq \min \{m_{k-1}+1,M\}$. Setting $\phi (x^{\ell(k)}):=\max _{0\leq j\leq m_{k}}\phi (x^{k-j})$ and 
\begin{equation}\label{eq:grippo}
\nu _{k}:=\phi (x^{\ell(k)})-\phi (x^{k}),\qquad 0=\delta_{min}\leq \delta_{k+1}\leq \frac{ \phi(x^{\ell(k)})- \phi(x^{\ell(k+1)})}{\phi(x^{\ell(k)})-\phi(x^{k+1})},
\end{equation}
the definitions of $\nu _{k}$ and $\delta _{k+1}$ in \eqref{eq:grippo} satisfies \ref{it:deltamin} with $\delta _{min}=0$.
In fact, from the definition of $\phi (x^{\ell(k)})$ it follows that $\nu _{0}=0$ and $\phi (x^{k})\leq \phi (x^{\ell (k)}),$ for all $k\in\mathbb{N}$, which ensures that $\nu _{k}\geq 0.$
From Proposition~\ref{prop15u}\,\ref{it:prop15u:decrease} and definition of $\nu _{k}$ in \eqref{eq:grippo} it follows that $\phi (x^{k+1}) < \phi (x^{\ell (k)}).$ Since $m_{k+1}\leq m_{k}+ 1$, we conclude that $\phi (x^{\ell(k+1)})\leq \phi (x^{\ell(k)}).$
Thus, we have $\phi(x^{\ell(k)})- \phi(x^{\ell(k+1)}) \leq \phi(x^{\ell(k)})-\phi(x^{k+1}),$ which shows that $\delta _{k+1}\in [0,1].$ By using the definitions of $\nu _{k}$ and $\delta _{k+1}$ in \eqref{eq:grippo}, we have 
 \begin{align*}
 \nu_{k+1} &=\frac{\phi(x^{\ell(k)})-\phi(x^{k+1})-\big(\phi(x^{\ell(k)})- \phi(x^{\ell(k+1)})\big)}{\phi(x^{\ell(k)})-\phi(x^{k+1})}\big( \phi (x^{k})-\phi(x^{k+1})+\nu _{k} \big) \\
   &=\left(1-\frac{\phi(x^{\ell(k)})- \phi(x^{\ell(k+1)})}{\phi(x^{\ell(k)})-\phi(x^{k+1})}\right)\left( \phi (x^{k})-\phi(x^{k+1})+\nu _{k} \right) \\
& \leq (1-\delta _{k+1})\big(\phi (x^{k})-\phi(x^{k+1})+\nu _{k} \big), 
\end{align*}
which shows that $\nu _{k+1}$ satisfies \eqref{nmr1}.
Therefore, the strategy defined in \eqref{eq:grippo} is a particular instance of \ref{it:deltamin} which turns Algorithm~\ref{Alg:ASSPM} into a non-monotone boosted version of the DCA employing the non-monotone line search proposed in \cite{Grippo1986}. It worth to mention that, since the non-monotone rule \eqref{eq:grippo} does not satisfy the condition $\nu_{k}>0$ for all $k\in \mathbb{N}$, it can not be used to boost the DCA in the case of $g$ is non-differentiable (see Proposition~\ref{prop15u} in the sequel). Therefore, such rule will be used only in the case of $g$ is continuously differentiable.
\end{example}

\section{Convergence analysis: $g$ possibly non-differentiable} \label{sec4}
The aim of this section is to present convergence results and iteration-complexity analysis of nmBDCA when the function $g$ is possibly non-differentiable. It is worth to mention that in the next result we need to assume that $\nu _{k}>0$. We begin by showing that Algorithm~\ref{Alg:ASSPM} is well-defined.
\begin{proposition}\label{prop15u}
 Let $(x^{k} )_{k \in \mathbb{N}}$ be the sequence generated by Algorithm~\ref{Alg:ASSPM}. For each $k\in\mathbb{N}$, assume that $d^{k}\neq 0$ and $\nu _{k}>0$. Then, the following statements hold:
\begin{enumerate}[label={({\roman*})}, ref={(\roman*)}]
	\item\label{it:prop15u:ddsc}
	There holds ${\hat{\delta} _{k}}:=\nu _{k}/ (g(y^{k}+d^{k})+g(x^k)-2g(y^{k}))>0$, and 
		\begin{equation*}
			\phi (y^{k}+\lambda d^{k})\leq \phi (y^{k}) - \rho\lambda^2 \| d^{k}\| ^{2}+\nu_k, \qquad \forall \lambda \in (0,{\delta}_{k}], 
		\end{equation*}
		where ${\delta} _{k}:= \min \{ {\hat{\delta} _{k}},1, ({3\sigma})/({2\rho})\}$. Consequently, the line search in Step 3 is well-defined.
	\item\label{it:prop15u:decrease}
		 $\phi (x^{k+1}) \leq \phi (x^{k}) - (\sigma+\rho\lambda_k^2) \| d^{k}\| ^{2}+\nu_k$.
\end{enumerate}
\end{proposition}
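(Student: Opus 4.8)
The plan is to handle the two items in turn, deriving everything from the strong convexity of $g$ and $h$ (assumption~\ref{it:ghstronglyconvex}), the characterization $w^{k}\in\partial g(y^{k})$ from \eqref{eq:charyk}, and the fact that $w^{k}\in\partial h(x^{k})$ by construction.

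For item~\ref{it:prop15u:ddsc} I would first establish positivity of the denominator of $\hat\delta_{k}$. Since $y^{k}+d^{k}=2y^{k}-x^{k}$, the point $y^{k}$ is the midpoint of $y^{k}+d^{k}$ and $x^{k}$; applying strong convexity of $g$ with $\lambda=1/2$ gives $2g(y^{k})\le g(y^{k}+d^{k})+g(x^{k})-\sigma\|d^{k}\|^{2}$, so $g(y^{k}+d^{k})+g(x^{k})-2g(y^{k})\ge\sigma\|d^{k}\|^{2}>0$ because $d^{k}\neq0$; together with $\nu_{k}>0$ this yields $\hat\delta_{k}>0$. The core of the item is the estimate for $\lambda\in(0,1]$, which I would obtain by bounding the two pieces of $\phi(y^{k}+\lambda d^{k})-\phi(y^{k})$ separately. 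For the $g$-piece, convexity of $g$ along the segment $[y^{k},y^{k}+d^{k}]$ gives $g(y^{k}+\lambda d^{k})-g(y^{k})\le\lambda\big(g(y^{k}+d^{k})-g(y^{k})\big)$. For the $h$-piece I need a \emph{lower} bound on $h(y^{k}+\lambda d^{k})-h(y^{k})$, and this is the main obstacle: the natural subgradient $w^{k}$ lives at $x^{k}$, not at $y^{k}$, and in the non-differentiable-$g$ setting $d^{k}$ need not be a descent direction. I would instead choose any $s^{k}\in\partial h(y^{k})$ (nonempty, $h$ finite convex), apply the strong-convexity inequality (Theorem~\ref{teo2}\,\ref{it:teo2:fct-geq}) at $y^{k}$ to get $h(y^{k}+\lambda d^{k})-h(y^{k})\ge\lambda\langle s^{k},d^{k}\rangle+\tfrac{\sigma}{2}\lambda^{2}\|d^{k}\|^{2}$, and control $\langle s^{k},d^{k}\rangle$ by the strong monotonicity of $\partial h$ (Theorem~\ref{teo2}\,\ref{it:teo2:inner-geq}), i.e. $\langle s^{k},d^{k}\rangle\ge\langle w^{k},d^{k}\rangle+\sigma\|d^{k}\|^{2}$.

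Combining the two pieces and then eliminating $\langle w^{k},d^{k}\rangle$ via the subgradient inequality for $g$ at $y^{k}$ (using $w^{k}\in\partial g(y^{k})$), which gives $-\langle w^{k},d^{k}\rangle\le g(x^{k})-g(y^{k})-\tfrac{\sigma}{2}\|d^{k}\|^{2}$, I expect the cross terms to collapse exactly into the denominator of $\hat\delta_{k}$, producing
\[
\phi(y^{k}+\lambda d^{k})-\phi(y^{k})\le\lambda\big(g(y^{k}+d^{k})+g(x^{k})-2g(y^{k})\big)-\tfrac{3\sigma}{2}\lambda\|d^{k}\|^{2}-\tfrac{\sigma}{2}\lambda^{2}\|d^{k}\|^{2}.
\]
At this point the three cut-offs in $\delta_{k}$ have transparent roles: $\lambda\le1$ validates the convexity bound on $g$; $\lambda\le\hat\delta_{k}$ forces the first term to be at most $\nu_{k}$; and $\lambda\le 3\sigma/(2\rho)$ ensures $\tfrac{3\sigma}{2}\lambda\|d^{k}\|^{2}\ge\rho\lambda^{2}\|d^{k}\|^{2}$, so that after discarding the nonpositive last term I obtain $\phi(y^{k}+\lambda d^{k})\le\phi(y^{k})-\rho\lambda^{2}\|d^{k}\|^{2}+\nu_{k}$ for all $\lambda\in(0,\delta_{k}]$. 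Well-definedness of the line search is then immediate: since $\zeta\in(0,1)$, the backtracking values $\zeta^{j}\lambda_{k-1}$ decrease to $0$ and eventually enter $(0,\delta_{k}]$, so the set defining $j_{k}$ in \eqref{eq:jku} is nonempty and $\lambda_{k}$ is produced after finitely many reductions.

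For item~\ref{it:prop15u:decrease} I would simply chain the accepted line-search inequality with the basic DCA decrease. By definition of $j_{k}$ the chosen step satisfies $\phi(x^{k+1})=\phi(y^{k}+\lambda_{k}d^{k})\le\phi(y^{k})-\rho\lambda_{k}^{2}\|d^{k}\|^{2}+\nu_{k}$, while Proposition~\ref{pr:ffr}\,\ref{it:ffr:ineq} gives $\phi(y^{k})\le\phi(x^{k})-\sigma\|d^{k}\|^{2}$. Adding the two inequalities yields $\phi(x^{k+1})\le\phi(x^{k})-(\sigma+\rho\lambda_{k}^{2})\|d^{k}\|^{2}+\nu_{k}$, which is the desired estimate.
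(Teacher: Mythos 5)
Your proposal is correct and follows essentially the same route as the paper: bound the $g$- and $h$-pieces of $\phi(y^{k}+\lambda d^{k})-\phi(y^{k})$ separately using strong convexity of $h$ at $y^{k}$, strong monotonicity of $\partial h$ to pass from $s^{k}\in\partial h(y^{k})$ to $w^{k}$, and the relation between $g(x^{k})$, $g(y^{k})$ and $\langle w^{k},d^{k}\rangle$, arriving at the same key intermediate estimate with the $-\tfrac{3\sigma}{2}\lambda\|d^{k}\|^{2}$ term and the same reading of the three cut-offs in $\delta_{k}$. The only (harmless) deviations are that you prove positivity of the denominator of $\hat\delta_{k}$ by the midpoint strong-convexity inequality rather than by two subgradient inequalities at $y^{k}$, and you use the plain convexity chord bound on $g$ while recovering the lost $\sigma$-term from the strong subgradient inequality $g(x^{k})\ge g(y^{k})-\langle w^{k},d^{k}\rangle+\tfrac{\sigma}{2}\|d^{k}\|^{2}$ instead of from the chord bound itself; item (ii) is identical to the paper's argument.
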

\begin{proof} 
Before starting the proof, we remind that $d^{k}=y^{k}-x^{k}$. To prove item~\ref{it:prop15u:ddsc}, assume that $d^{k}\neq 0$ and take $w^{k}\in\partial h(x^{k})$. Since $h$ is strongly convex with modulus $\sigma>0$, it follows from Theorem~\ref{teo2}\,\ref{it:teo2:fct-geq} that 
\begin{equation} \label{eq:sch}
h(y^{k}+\lambda d^{k})\geq h(y^k) + \lambda \langle s, d^k \rangle + \frac{\sigma}{2} \lambda^2\|d^k\|^{2}, \qquad \forall s \in\partial h(y^{k}).
\end{equation}
Moreover, taking into account that $w^{k}\in\partial h(x^{k})$, we can apply Theorem~\ref{teo2}\,\ref{it:teo2:inner-geq} to obtain that $\langle s, d^k \rangle \geq \langle w^k, d^k \rangle+ {\sigma}\|d^k\|^{2}$. Hence, \eqref{eq:sch} becomes 
\begin{equation} \label{eq:sscs}
h(y^{k}+\lambda d^{k})\geq h(y^k) + \lambda \langle w^k, d^k \rangle + {\sigma}\lambda\|d^k\|^{2}+ \frac{\sigma}{2} \lambda^2\|d^k\|^{2}.
\end{equation}
Considering that $y^{k}$ is the solution of \eqref{eq:ASSPMu} we have that $g(y^{k})- \left \langle w^{k},d^{k} \right\rangle \leq g(x^{k})$, which combining with \eqref{eq:sscs} yields 
\begin{equation} \label{eq:tscs}
-(h(y^{k}+\lambda d^{k})-h(y^k)) \leq \lambda\left( g(x^{k})- g(y^{k}) \right) - {\sigma}\lambda\|d^k\|^{2}- \frac{\sigma}{2} \lambda^2\|d^k\| ^{2}.
\end{equation}
On the other hand, by using the strong convexity of $g$ with modulus $\sigma>0$ we have 
\begin{align}\label{eq:IX}
g(y^{k}+\lambda d^{k})-g(y^{k}) & = g( \lambda (y^{k}+d^{k}) +(1-\lambda) y^{k} )-g(y^{k}) \notag \\
 & \leq \lambda g(y^{k}+d^{k}) + (1-\lambda)g(y^{k})- \frac{\sigma}{2}\lambda(1-\lambda)\|d^k\|^2 -g(y^{k})\notag \\
 & = \lambda\left(g(y^{k}+d^{k})-g(y^{k})\right)- \frac{\sigma}{2}\lambda(1-\lambda)\|d^k\|^2, 
\end{align}
for all $\lambda \in (0,1]$.
Combining the definition of $ \phi $ in \eqref{Pr:DCproblem} with \eqref{eq:tscs} and \eqref{eq:IX}, we obtain 
\begin{align}\label{eq:phiin}
 \phi (y^{k}+\lambda d^{k})-\phi (y^{k}) &=g(y^{k}+\lambda d^{k})-g(y^{k})-\left(h(y^{k}+\lambda d^{k})-h(y^{k}) \right) \notag \\
 &\leq -\frac{3\sigma}{2}\lambda\|d^k\|^2+\lambda\left(g(y^{k}+d^{k})+g(x^{k})-2g(y^{k})\right).
\end{align}
Moreover, it follows from Theorem~\ref{teo2}\,\ref{it:teo2:fct-geq} that 
\begin{equation*}
g(y^{k}+d^{k})\geq g(y^{k}) +\langle w, d^k \rangle + \frac{\sigma}{2} \|d^k\|^{2}, \qquad g(x^{k}) \geq g(y^{k}) -\langle w, d^k \rangle + \frac{\sigma}{2} \|d^k\|^{2},
\end{equation*}
for all $w\in \partial g(y^{k}),$ which implies that $g(y^{k}+d^{k})+g(x^k)-2g(y^{k}) \geq \sigma \|d^k\|^{2} >0$. Thus, due to $\nu _{k}>0$, we have $0<{\hat{\delta} _{k}}:=\nu _{k}/ (g(y^{k}+d^{k})+g(x^k)-2g(y^{k}))$, which proves the first statement of item~\ref{it:prop15u:ddsc}. Moreover, we have 
 \begin{equation*}
 0<\lambda \left(g(y^{k}+d^{k})+g(x^k)-2g(y^{k}) \right) \leq \nu _{k}, \qquad \lambda \in ( 0,{\hat{\delta} _{k}}].
 \end{equation*}
 Set ${\delta} _{k}:= \min \{ {\hat{\delta} _{k}},1, ({3\sigma})/({2\rho})\}$. Hence, the last inequality together with \eqref{eq:phiin} implies
\begin{align*} 
 \phi(y^{k}+\lambda d^{k})- \phi(y^{k}) \leq - \rho\lambda^2\|d^{k}\|^{2} +\nu _{k} ,\qquad \forall \lambda \in (0, \delta _{k}], 
\end{align*}
which concludes the second statement of the item~\ref{it:prop15u:ddsc}.
Finally, considering that $\lim _{j\to \infty} \zeta ^{j} \lambda _{k-1} =0$, it follows from the last inequality that the line search in {Step~3} is well-defined, and the proof of item~\ref{it:prop15u:ddsc} is concluded. To prove item~~\ref{it:prop15u:decrease}, we first note that item~\ref{it:prop15u:ddsc} implies that Step 4 is well-defined for $\nu _{k}>0$. Thus, \eqref{eq:jku} implies $\phi (y^{k}+\lambda_k d^{k})\leq \phi (y^{k}) - \rho\lambda_k^2 \| d^{k}\| ^{2}+\nu_k$, which combined with Proposition~\ref{pr:ffr}\,\ref{it:ffr:ineq} implies item~\ref{it:prop15u:decrease} and the proof of the proposition is completed.
\end{proof}

Note that if $x^{k+1}=x^k$, then from the definition of Algorithm~\ref{Alg:ASSPM} one can easily show that $d^{k}=0$, and hence, $x^{k}$ is a critical point of $\phi$. Therefore, from now on we assume that $d^{k}\neq 0$, or equivalently, that the sequence $(x^{k}) _{k\in \mathbb{N}}$ generated by Algorithm~\ref{Alg:ASSPM} is infinite.

\begin{remark} \label{re:aapn}
In the case that $g$ is convex and non-differentiable the direction $d^{k}\neq 0$ generated by Step~3 in Algorithm~\ref{Alg:ASSPM} is not in general a descent direction of $\phi$ at $y^{k}$, see Example~\ref{ExampleAlgorithm}. For this reason in Step~3 of Algorithm~\ref{Alg:ASSPM} we must assume that $\nu _{k}>0$, otherwise we cannot compute $\lambda _{k}>0$ satisfying \eqref{eq:jku}. However, as we will see in the Section~\ref{sec5}, whenever $g$ is convex and differentiable we just need to assume that $\nu _{k}\geq 0$ to compute $\lambda _{k}>0$ satisfying \eqref{eq:jku}.
\end{remark}


\subsection{Asymptotic convergence analysis}

Next, we prove the main results of this section.
\begin{theorem}\label{pr:assym}
If $\lim _{k\to\infty}\|d^{k}\|=0$, then every cluster point of $(x^k)_{k\in\mathbb{N}}$, if any, is a critical point of $\phi$. 
\end{theorem}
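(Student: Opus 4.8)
The plan is to exploit the characterization \eqref{eq:charyk} of $y^k$, which says $w^k\in\partial g(y^k)$, together with the defining property $w^k\in\partial h(x^k)$ from Step~2. The crucial observation is that a single vector $w^k$ simultaneously lives in $\partial h(x^k)$ and $\partial g(y^k)$; passing to the limit along a suitable subsequence should place a single limit vector $w^*$ in both $\partial h(x^*)$ and $\partial g(x^*)$, which is exactly the criticality condition $\partial g(x^*)\cap\partial h(x^*)\neq\emptyset$.

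First I would fix a cluster point $x^*$ of $(x^k)_{k\in\mathbb{N}}$ and a subsequence $(x^{k_j})_{j\in\mathbb{N}}$ with $\lim_{j\to\infty}x^{k_j}=x^*$. Since $d^{k}=y^{k}-x^{k}$ and the hypothesis gives $\lim_{k\to\infty}\|d^k\|=0$, I would immediately deduce that $y^{k_j}=x^{k_j}+d^{k_j}$ also converges to $x^*$. This is the step that transfers the limit from the $x$-iterates to the $y$-iterates, and it is what lets me evaluate the $g$-subdifferential at the common limit $x^*$.

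Next I would apply Proposition~\ref{cont_subdif}. Because $w^{k_j}\in\partial h(x^{k_j})$ and $x^{k_j}\to x^*$, that proposition guarantees that $(w^{k_j})_{j\in\mathbb{N}}$ is bounded and that each of its cluster points belongs to $\partial h(x^*)$. I would then extract a further subsequence (not relabeled) so that $w^{k_j}\to w^*$ with $w^*\in\partial h(x^*)$. Along this same subsequence I still have $w^{k_j}\in\partial g(y^{k_j})$ by \eqref{eq:charyk}, and since $y^{k_j}\to x^*$, a second application of Proposition~\ref{cont_subdif} (now to $g$) shows that the limit $w^*$ of $(w^{k_j})$ also satisfies $w^*\in\partial g(x^*)$. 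Combining the two memberships yields $w^*\in\partial g(x^*)\cap\partial h(x^*)\neq\emptyset$, so $x^*$ is a critical point of $\phi$ by definition.

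I do not expect a serious obstacle here; the argument is essentially an outer-semicontinuity passage to the limit. The only point requiring care is the bookkeeping of subsequences: I must ensure that the \emph{same} subsequence along which $w^{k_j}\to w^*$ is used for both the $h$-conclusion and the $g$-conclusion, so that a single limit vector $w^*$ is shown to belong to both subdifferentials. Once the common subsequence is fixed, the two applications of Proposition~\ref{cont_subdif} close the proof. Note also that the whole argument is vacuously consistent if $(x^k)$ has no cluster point, matching the ``if any'' in the statement.
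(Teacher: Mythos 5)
Your argument is correct and is essentially the same as the paper's own proof: both pass to a subsequence $x^{k_\ell}\to x^*$, use $\|d^k\|\to 0$ to get $y^{k_\ell}\to x^*$, and apply Proposition~\ref{cont_subdif} to the common vector $w^{k_\ell}\in\partial h(x^{k_\ell})\cap\partial g(y^{k_\ell})$ to place a limit $w^*$ in $\partial g(x^*)\cap\partial h(x^*)$. Your explicit care with the ``further subsequence'' is precisely what the paper compresses into its ``without loss of generality.''
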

\begin{proof}
Let ${\bar x}$ be a cluster point of $(x^k)_{k\in\mathbb{N}}$, and $(x^{k_{\ell}})_{\ell\in \mathbb{N}}$ a subsequence of $(x^k)_{k\in\mathbb{N}}$ such that $\lim _{\ell\to \infty}x^{k_{\ell}}={\bar x}$. Let $(w^{k_{\ell}})_{\ell\in \mathbb{N}}$ and $(y^{k_{\ell}})_{\ell\in \mathbb{N}}$ be the according sequences generated by Algorithm~\ref{Alg:ASSPM}, i.e., $w^{k_{\ell}}\in\partial h(x^{k_{\ell}})$. From \eqref{eq:charyk} we have that $ w^{k_{\ell}} \in \partial g (y^{k_{\ell}})$. Since $\lim _{k\to\infty}\|d^{k}\|=0$ and $\lim _{\ell\to \infty}x^{k_{\ell}}={\bar x}$ we obtain that $\lim _{\ell\to \infty}y^{k_{\ell}}={\bar x}$. Considering that $w^{k_{\ell}}\in \partial h(x^{k_{\ell}})\cap \partial g(y^{k_{\ell}})$ and due to the convexity of $g$ and $h$, without loss of generality, we can apply Proposition \ref{cont_subdif} to obtain that $\lim _{\ell\to \infty}w^{k_{\ell}}=\bar{w}\in \partial g({\bar x}) \cap \partial h({\bar x}),$ which concludes the proof.
\end{proof}
\begin{theorem}\label{prop1a} 
If $(\nu_k)_{k\in\mathbb{N}}$ is chosen according to strategy \ref{it:deltamin}, then $(\phi (x^{k})+\nu _{k}) _{k\in \mathbb{N}}$ is non-increasing and convergent.
\end{theorem}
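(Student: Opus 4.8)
The plan is to recognize this as a standard monotone convergence argument: I will show that the sequence $(\phi(x^{k})+\nu_{k})_{k\in\mathbb{N}}$ is non-increasing and bounded below, and then invoke the monotone convergence theorem. The two ingredients are the defining inequality \eqref{nmr1} of strategy~\ref{it:deltamin} and the descent estimate of Proposition~\ref{prop15u}\,\ref{it:prop15u:decrease}, and the whole argument is essentially a repackaging of what is already recorded in Remark~\ref{re:pip}.

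First I would establish that the quantity $\phi(x^{k})-\phi(x^{k+1})+\nu_{k}$ is nonnegative for every $k$. This follows directly from Proposition~\ref{prop15u}\,\ref{it:prop15u:decrease}, which gives $\phi(x^{k+1})\leq \phi(x^{k})-(\sigma+\rho\lambda_{k}^{2})\|d^{k}\|^{2}+\nu_{k}$, so that $0\leq \sigma\|d^{k}\|^{2}\leq \phi(x^{k})-\phi(x^{k+1})+\nu_{k}$. Next, to obtain monotonicity, I would combine this nonnegativity with \eqref{nmr1}. Since $\delta_{k+1}\in[\delta_{min},1]\subseteq[0,1]$, multiplying the nonnegative quantity $\phi(x^{k})-\phi(x^{k+1})+\nu_{k}$ by $\delta_{k+1}\geq 0$ and rearranging the upper bound $\nu_{k+1}\leq (1-\delta_{k+1})(\phi(x^{k})-\phi(x^{k+1})+\nu_{k})$ from \eqref{nmr1} yields
\[
0\leq \delta_{k+1}\big(\phi(x^{k})-\phi(x^{k+1})+\nu_{k}\big)\leq \big(\phi(x^{k})+\nu_{k}\big)-\big(\phi(x^{k+1})+\nu_{k+1}\big),
\]
which is precisely \eqref{eq:ccsa}. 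Hence $\phi(x^{k+1})+\nu_{k+1}\leq \phi(x^{k})+\nu_{k}$ for all $k$, proving that the sequence is non-increasing.

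Finally, for convergence I would appeal to boundedness from below: since $\nu_{k}\geq 0$ for all $k$ by the construction in strategy~\ref{it:deltamin}, and $\phi(x^{k})\geq \phi^{*}$ by assumption~\ref{it:phistarinf}, we obtain $\phi(x^{k})+\nu_{k}\geq \phi^{*}>-\infty$. A non-increasing sequence that is bounded below converges, which completes the proof. I do not expect any serious obstacle here; the only point requiring care is verifying the sign $\phi(x^{k})-\phi(x^{k+1})+\nu_{k}\geq 0$, since it is this nonnegativity that makes the nonnegative factor $\delta_{k+1}$ usable to turn the bound in \eqref{nmr1} into the telescoping inequality above.
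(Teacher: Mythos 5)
Your proof is correct and follows essentially the same route as the paper: the paper's proof simply cites inequality \eqref{eq:ccsa} from Remark~\ref{re:pip} for monotonicity and then invokes \ref{it:phistarinf} together with $\nu_k\geq 0$ for the lower bound, which is exactly the argument you spell out in full (including the needed nonnegativity of $\phi(x^k)-\phi(x^{k+1})+\nu_k$ from Proposition~\ref{prop15u}\,\ref{it:prop15u:decrease}). No gaps.
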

\begin{proof}
It follows from \eqref{eq:ccsa} in Remark~\ref{re:pip} that $(\phi (x^{k})+\nu _{k})_{k\in \mathbb{N}}$ is non-increasing. Therefore, by using \ref{it:phistarinf} and $(\nu _{k})_{k\in \mathbb{N}}\subset\mathbb{R}_{+}$, the desired result directly follows. 
\end{proof}
\begin{corollary}
If $(\nu_k)_{k\in\mathbb{N}}$ is chosen according to strategy \ref{it:deltamin} and $\lim _{k\to\infty}\nu _{k}=0$, then every cluster point of $(x^k)_{k\in\mathbb{N}}$, if any, is a critical point of $\phi$. 
\end{corollary}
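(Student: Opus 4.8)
The plan is to reduce the corollary to Theorem~\ref{pr:assym} by establishing the single fact $\lim_{k\to\infty}\|d^{k}\|=0$; once this is in hand, the conclusion that every cluster point is critical follows immediately from that theorem. So the entire task amounts to controlling the direction norms $\|d^{k}\|$ using the two hypotheses: that $(\nu_k)$ obeys strategy~\ref{it:deltamin} and that $\nu_k\to 0$.

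First I would invoke the descent estimate in Proposition~\ref{prop15u}\,\ref{it:prop15u:decrease}, namely $\phi(x^{k+1})\leq \phi(x^{k})-(\sigma+\rho\lambda_k^2)\|d^{k}\|^{2}+\nu_k$. Dropping the nonnegative term $\rho\lambda_k^2\|d^{k}\|^{2}$ and using $\sigma>0$, this yields
\begin{equation*}
\sigma\|d^{k}\|^{2}\leq \phi(x^{k})-\phi(x^{k+1})+\nu_k.
\end{equation*}
The key algebraic step is then to rewrite the right-hand side so that it involves the quantity whose convergence is guaranteed by Theorem~\ref{prop1a}. Adding and subtracting $\nu_{k+1}$ gives the telescoping form
\begin{equation*}
\sigma\|d^{k}\|^{2}\leq \big(\phi(x^{k})+\nu_k\big)-\big(\phi(x^{k+1})+\nu_{k+1}\big)+\nu_{k+1}.
\end{equation*}

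To finish, I would combine two limiting facts. By Theorem~\ref{prop1a}, under strategy~\ref{it:deltamin} the sequence $(\phi(x^{k})+\nu_k)_{k\in\mathbb{N}}$ is non-increasing and convergent; hence the difference of consecutive terms tends to $0$ as $k\to\infty$. Together with the hypothesis $\nu_{k+1}\to 0$, the right-hand side of the displayed inequality tends to $0$, so $\sigma\|d^{k}\|^{2}\to 0$ and therefore $\|d^{k}\|\to 0$. Applying Theorem~\ref{pr:assym} then completes the argument. I do not expect any serious obstacle here: the only point requiring care is the telescoping rearrangement, which is what lets me exploit \emph{both} the convergence of $(\phi(x^{k})+\nu_k)$ and the vanishing of $\nu_k$ simultaneously—neither fact alone would force $\|d^{k}\|\to 0$, since strategy~\ref{it:deltamin} by itself does not guarantee $\nu_k\to 0$ (that is precisely the extra assumption of this corollary).
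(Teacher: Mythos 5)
Your proposal is correct and follows essentially the same route as the paper: both arguments combine Proposition~\ref{prop15u}\,\ref{it:prop15u:decrease} with the convergence of $(\phi(x^{k})+\nu_{k})_{k\in\mathbb{N}}$ from Theorem~\ref{prop1a} and the hypothesis $\nu_{k}\to 0$ to get $\|d^{k}\|\to 0$, then invoke Theorem~\ref{pr:assym}. The only cosmetic difference is that the paper first deduces that $(\phi(x^{k}))_{k\in\mathbb{N}}$ itself converges and then passes to the limit, whereas you telescope $(\phi(x^{k})+\nu_{k})$ directly; the two computations are identical in substance.
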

\begin{proof}
Since $\lim _{k\to\infty} \nu_{k}=0$ from Theorem~\ref{prop1a} we have that $(\phi (x^{k})) _{k\in \mathbb{N}}$ is convergent. On the other hand, by Proposition~\ref{prop15u}\,\ref{it:prop15u:decrease} we obtain $0\leq \sigma \| d^{k}\| ^{2} \leq \phi (x^{k})+ \nu _{k}-\phi (x^{k+1})$, for all $k\in \mathbb{N}$. Therefore, taking limit in the last inequality we have that $\lim _{k\to\infty}\|d^{k}\|=0$. Finally, we apply Theorem~\ref{pr:assym} and the proof is complete.
\end{proof}
Next result proves the asymptotic convergence of Algorithm~\ref{Alg:ASSPM} when $(\nu _{k})_{k\in\mathbb{N}}$ is summable.
\begin{corollary}\label{coro:A2} 
If $(\nu_k)_{k\in\mathbb{N}}$ is chosen according to strategy \ref{it:nusummable}, then every cluster point of $(x ^{k})_{k\in\mathbb{N}}$, if any, is a critical point of $\phi$.
\end{corollary}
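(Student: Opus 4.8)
The plan is to reduce the claim to Theorem~\ref{pr:assym} by establishing that $\lim_{k\to\infty}\|d^{k}\|=0$. The driving estimate is the per-iteration sufficient-decrease inequality from Proposition~\ref{prop15u}\,\ref{it:prop15u:decrease}, which in particular yields
\[
\sigma\|d^{k}\|^{2}\leq \phi(x^{k})-\phi(x^{k+1})+\nu_{k}, \qquad \forall k\in\mathbb{N}.
\]

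First I would sum this inequality over $k=0,\dots,N$, so that the $\phi$-terms telescope:
\[
\sigma\sum_{k=0}^{N}\|d^{k}\|^{2}\leq \phi(x^{0})-\phi(x^{N+1})+\sum_{k=0}^{N}\nu_{k}.
\]
Next I would bound the right-hand side uniformly in $N$. Assumption~\ref{it:phistarinf} gives $\phi(x^{N+1})\geq\phi^{*}$, hence $-\phi(x^{N+1})\leq-\phi^{*}$, while strategy~\ref{it:nusummable} gives $\sum_{k=0}^{N}\nu_{k}\leq\sum_{k=0}^{+\infty}\nu_{k}<+\infty$. Combining these,
\[
\sigma\sum_{k=0}^{N}\|d^{k}\|^{2}\leq \phi(x^{0})-\phi^{*}+\sum_{k=0}^{+\infty}\nu_{k}<+\infty,
\]
and letting $N\to\infty$ shows that the series $\sum_{k=0}^{+\infty}\|d^{k}\|^{2}$ converges. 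Its general term therefore tends to zero, i.e.\ $\lim_{k\to\infty}\|d^{k}\|=0$. With this limit in hand I would invoke Theorem~\ref{pr:assym} directly to conclude that every cluster point of $(x^{k})_{k\in\mathbb{N}}$, if any, is a critical point of $\phi$.

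As for difficulty, there is essentially no genuine obstacle here: all the analytic weight sits upstream, in Proposition~\ref{prop15u} (the decrease guarantee) and in Theorem~\ref{pr:assym} (the subdifferential-continuity argument through Proposition~\ref{cont_subdif}). The only point requiring a moment's care is that the telescoped bound be uniform in $N$, and this is precisely where \emph{both} the lower boundedness~\ref{it:phistarinf} and the summability~\ref{it:nusummable} enter; dropping either one breaks the argument. Note also that, in contrast with strategy~\ref{it:deltamin}, here I do not pass through the auxiliary quantity $\phi(x^{k})+\nu_{k}$, since the summability of $(\nu_{k})_{k\in\mathbb{N}}$ already lets me absorb those terms as an absolutely convergent tail.
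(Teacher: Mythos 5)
Your proposal is correct and follows essentially the same route as the paper: sum the decrease inequality from Proposition~\ref{prop15u}\,\ref{it:prop15u:decrease}, telescope, bound the right-hand side using \ref{it:phistarinf} and the summability of $(\nu_k)_{k\in\mathbb{N}}$ to get $\sum_{k=0}^{\infty}\|d^k\|^2<+\infty$, hence $\lim_{k\to\infty}\|d^k\|=0$, and conclude via Theorem~\ref{pr:assym}. The only difference is cosmetic: the paper states the summed bound directly rather than spelling out the finite partial sums.
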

\begin{proof} Proposition~\ref{prop15u}\,\ref{it:prop15u:decrease} gives $0\leq \sigma ^{2}\|d^{k}\|^{2}\leq \phi (x^{k})-\phi (x^{k+1})+\nu _{k}$, for all $k\in \mathbb{N}$. Thus, using \ref{it:phistarinf} we obtain 
\begin{equation*}
\sum _{k=0}^{\infty}\|d^{k}\|^{2}\leq \frac{1}{\sigma}\Big(\phi (x^{0})-\phi ^{*}+\sum _{k=0}^{\infty}\nu _{k}\Big)<+\infty,
\end{equation*}
which implies that $\lim _{k\to\infty}\| d^{k}\|=0$. The desired result follows from Theorem~\ref{pr:assym}.
\end{proof}
\begin{corollary} Suppose that $(\nu_k)_{k\in\mathbb{N}}$ is chosen according to strategy \ref{it:deltamin}. If $\delta _{min}>0,$ then every cluster point of $(x^{k})_{k\in \mathbb{N}}$, if any, is a critical point of $\phi$.
\end{corollary}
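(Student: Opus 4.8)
The plan is to reduce this corollary to the already-established summable case, namely Corollary~\ref{coro:A2}. The key observation is that the hypotheses here---strategy \ref{it:deltamin} together with $\delta_{min}>0$---are precisely the hypotheses under which Remark~\ref{re:pip} shows that $(\nu_k)_{k\in\mathbb{N}}$ is in fact summable. So the whole argument is a matter of chaining two previously proved facts rather than redoing any estimate from scratch.

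First I would invoke Remark~\ref{re:pip}. There it is shown that if $(\nu_k)_{k\in\mathbb{N}}$ satisfies \ref{it:deltamin} with $\delta_{min}>0$, then, using the telescoping bound \eqref{eq:ccsa}, assumption \ref{it:phistarinf}, and the defining inequality $\nu_{k+1}\leq (1-\delta_{min})(\phi(x^{k})-\phi(x^{k+1})+\nu_{k})$, one obtains the explicit bound $\sum_{k=0}^{+\infty}\nu_k < \nu_0 + ((1-\delta_{min})/\delta_{min})(\phi(x^0)-\phi^{*}+\nu_0) < \infty$. In other words, under the present hypotheses the sequence $(\nu_k)_{k\in\mathbb{N}}$ automatically satisfies strategy \ref{it:nusummable}.

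Once summability is in hand, I would simply apply Corollary~\ref{coro:A2}, whose conclusion is exactly that every cluster point of $(x^k)_{k\in\mathbb{N}}$, if any, is a critical point of $\phi$. For completeness it is worth recalling the mechanism inside that corollary: Proposition~\ref{prop15u}\,\ref{it:prop15u:decrease} yields $0\leq \sigma\|d^k\|^2 \leq \phi(x^k)-\phi(x^{k+1})+\nu_k$, summing over $k$ and using \ref{it:phistarinf} together with $\sum_k \nu_k<\infty$ gives $\sum_{k=0}^{\infty}\|d^k\|^2<\infty$, hence $\lim_{k\to\infty}\|d^k\|=0$, and then Theorem~\ref{pr:assym} delivers the criticality of every cluster point.

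Since every step is a direct citation, there is essentially no obstacle; the only point requiring care is the verification that $\delta_{min}>0$ is exactly the condition needed in Remark~\ref{re:pip} to pass from \ref{it:deltamin} to \ref{it:nusummable} (if $\delta_{min}=0$ the geometric-type bound degenerates and summability can fail, which is why the hypothesis $\delta_{min}>0$ is indispensable here).
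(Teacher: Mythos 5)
Your proposal is correct and follows exactly the paper's own argument: the authors likewise prove this corollary by combining Remark~\ref{re:pip} (which shows that \ref{it:deltamin} with $\delta_{min}>0$ implies the summability condition \ref{it:nusummable}) with Corollary~\ref{coro:A2}. The additional recap of the internal mechanism of Corollary~\ref{coro:A2} is accurate but not needed.
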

\begin{proof} 
It follows by combining Remark~\ref{re:pip} with Corollary~\ref{coro:A2}. 
\end{proof}
\begin{corollary}\label{coro:A3} 
If $(\nu_k)_{k\in\mathbb{N}}$ is chosen according to strategy \ref{it:nubound}, then every cluster point of $(x ^{k})_{k\in\mathbb{N}}$, if any, is a critical point of $\phi$.
\end{corollary}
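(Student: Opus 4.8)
The plan is to reduce the statement to Theorem~\ref{pr:assym} by showing that strategy \ref{it:nubound} forces $\lim_{k\to\infty}\|d^{k}\|=0$. The mechanism mirrors the summable case in Corollary~\ref{coro:A2}, but now the smallness of $\nu_k$ is only relative to $\|d^{k}\|^{2}$ rather than absolute, so the $\nu_k$ term must be absorbed into the descent estimate before telescoping.

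First I would invoke Proposition~\ref{prop15u}\,\ref{it:prop15u:decrease}, which yields $\sigma\|d^{k}\|^{2}\leq(\sigma+\rho\lambda_k^{2})\|d^{k}\|^{2}\leq\phi(x^{k})-\phi(x^{k+1})+\nu_k$ for all $k\in\mathbb{N}$, since $\rho\lambda_k^{2}\geq 0$. The key step is then to apply the defining property of strategy \ref{it:nubound} with a fixed $\delta\in(0,\sigma)$, say $\delta=\sigma/2$: there exists $k_0\in\mathbb{N}$ such that $\nu_k\leq(\sigma/2)\|d^{k}\|^{2}$ for every $k\geq k_0$. Substituting this bound into the previous inequality and moving the $\nu_k$ term to the left-hand side gives $(\sigma/2)\|d^{k}\|^{2}\leq\phi(x^{k})-\phi(x^{k+1})$ for all $k\geq k_0$.

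From here I would telescope: summing the last inequality from $k_0$ to $N$ and using assumption \ref{it:phistarinf} (so that $\phi(x^{N+1})\geq\phi^{*}$) produces $(\sigma/2)\sum_{k=k_0}^{N}\|d^{k}\|^{2}\leq\phi(x^{k_0})-\phi^{*}<+\infty$. Letting $N\to\infty$ shows that the series $\sum_{k}\|d^{k}\|^{2}$ converges, whence $\lim_{k\to\infty}\|d^{k}\|=0$. Theorem~\ref{pr:assym} then delivers the conclusion that every cluster point of $(x^{k})_{k\in\mathbb{N}}$, if any, is a critical point of $\phi$.

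The only delicate point is the choice of $\delta$: it must be strictly less than $\sigma$ so that after absorbing $\nu_k$ the coefficient $\sigma-\delta$ in front of $\|d^{k}\|^{2}$ stays positive, which is exactly what keeps the telescoping sum bounded. Since strategy \ref{it:nubound} guarantees the bound $\nu_k\leq\delta\|d^{k}\|^{2}$ eventually for any prescribed $\delta>0$, this requirement is always satisfiable, so there is no genuine obstacle beyond this bookkeeping; the argument is essentially a variant of Corollary~\ref{coro:A2} adapted to the relative, rather than absolute, control on $\nu_k$.
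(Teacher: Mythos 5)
Your proposal is correct and follows essentially the same route as the paper: fix $\delta=\sigma/2$ in strategy \ref{it:nubound}, absorb $\nu_k$ into the descent inequality of Proposition~\ref{prop15u}\,\ref{it:prop15u:decrease} to get $(\sigma/2)\|d^{k}\|^{2}\leq\phi(x^{k})-\phi(x^{k+1})$ for $k\geq k_0$, deduce $\lim_{k\to\infty}\|d^{k}\|=0$, and invoke Theorem~\ref{pr:assym}. The only cosmetic difference is that you telescope the sum to obtain summability of $\|d^{k}\|^{2}$, while the paper observes that $(\phi(x^{k}))_{k\geq k_0}$ is monotone and bounded below, hence convergent, so its consecutive differences vanish; both steps yield the same conclusion.
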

\begin{proof} From the definition of strategy \ref{it:nubound}, there exists $k_{0}\in \mathbb{N}$ such that $0\leq \nu _{k}\leq \sigma\|d^{k}\|^{2}/2$, for all $k\geq k_{0}$. Thus, 
$\sigma\|d^{k}\|^{2}/2\leq \sigma\|d^{k}\|^{2} - \nu_k$, for all $k\geq k_{0}$. 
Hence, using Proposition~\ref{prop15u}\,\ref{it:prop15u:decrease} we have
$0\leq \sigma\|d^{k}\|^{2}/2 \leq \phi (x^{k})-\phi (x^{k+1})$, for all $k\geq k_{0}$. Hence, using \ref{it:phistarinf} we conclude that $(\phi (x^{k})) _{k\geq k_{0}}$ is convergent. Furthermore, it follows that $\lim _{k\to\infty}\|d^{k}\|=0$. Therefore, applying Theorem~\ref{pr:assym} we obtain the desired result.
\end{proof}

\begin{remark}
In particular, Corollary~\ref{coro:A3} is also valid by replacing the strategy~\ref{it:nubound} by the alternative strategy \ref{eq:nub2}. Indeed, if we assume that $(\nu _{k})_{k\in \mathbb{N}}$ satisfies~\ref{eq:nub2}, then using Proposition~\ref{prop15u}\,\ref{it:prop15u:decrease} we have
$0< \sigma\|d^{k}\|^{2} \leq \phi (x^{k})-\phi (x^{k+1}) + \nu _{k} \leq \phi (x^{k})-\phi (x^{k+1}) + \bar{\delta}\|d^{k}\|^{2}$ for all $k\geq k_{0}$, which implies that
$0< (\sigma - \bar{\delta})\|d^{k}\|^{2} \leq \phi (x^{k})-\phi (x^{k+1}),$ for all $k\geq k_{0}$. Thus, using \ref{it:phistarinf} we conclude that $(\phi (x^{k})) _{k\geq k_{0}}$ is convergent and $\lim _{k\to\infty}\|d^{k}\|=0$. Therefore, the assertion holds by using Theorem~\ref{pr:assym}. 
\end{remark}

\subsection{Iteration-complexity analysis} 
 In this section some iteration-complexity bounds for $(x^{k})_{k\in \mathbb{N}}$ generated by Algorithm~\ref{Alg:ASSPM} are presented. Our results establish iteration-complexity bounds for the case where $(\nu_k)_{k\in\mathbb{N}}$ is chosen according to each one of the strategies \ref{it:nusummable} and \ref{it:nubound}.
Before present it, we first note that in particular Proposition~\ref{prop15u}\,\ref{it:prop15u:decrease} implies that 
\begin{equation}\label{eq:CGnd}
\sigma\| d^{k}\| ^{2}\leq \phi (x^{k})-\phi (x^{k+1}) +\nu _{k},\qquad \forall k\in \mathbb{N}.
\end{equation}

\begin{theorem}
Suppose that $(\nu_k)_{k\in\mathbb{N}}$ is chosen according to strategy \ref{it:nusummable}. For each $N\in\mathbb{N},$ we have
\begin{equation}\label{eq:comp_A2nd}
\min \left\{\| d^{k}\|:~k=0,1,\cdots,N-1\right\}\leq {\frac{ \sqrt{\phi(x^{0})-\phi^{*}+\sum _{k=0}^{\infty}\nu _{k}}}{ \sqrt{ \sigma }}}\frac{1}{ \sqrt{N}}.
\end{equation}
Consequently, for a given accuracy $\epsilon>0$, if $N\geq \left({ \phi (x^{0})-\phi ^{*}+\sum _{k=0}^{\infty}\nu _{k} }\right)/(\sigma \epsilon^{2})$, then $\min \{\| d^{k}\|: ~k=0,1,\cdots,N-1\}\leq \epsilon.$
\end{theorem}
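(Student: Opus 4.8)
The plan is to derive the bound by summing the fundamental one-step decrease estimate \eqref{eq:CGnd} over the first $N$ iterations, as is standard in iteration-complexity arguments. The quantity $\min\{\|d^k\|:k=0,1,\cdots,N-1\}$ appears because we cannot control any individual $\|d^k\|$, only their aggregate, so we bound the smallest one by the average.

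**The main computation.**

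First I would sum \eqref{eq:CGnd} from $k=0$ to $N-1$. Writing this out, the term $\phi(x^k)-\phi(x^{k+1})$ telescopes, leaving
\begin{equation*}
\sigma\sum_{k=0}^{N-1}\|d^k\|^2\leq \phi(x^0)-\phi(x^N)+\sum_{k=0}^{N-1}\nu_k.
\end{equation*}
Next I would use assumption \ref{it:phistarinf}, namely $\phi(x^N)\geq\phi^*$, to replace $-\phi(x^N)$ by $-\phi^*$, and extend the finite sum of the $\nu_k$ to the full series $\sum_{k=0}^{\infty}\nu_k$ (legitimate since each $\nu_k\geq 0$ and the series converges by \ref{it:nusummable}); both steps only enlarge the right-hand side. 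This yields
\begin{equation*}
\sigma\sum_{k=0}^{N-1}\|d^k\|^2\leq \phi(x^0)-\phi^*+\sum_{k=0}^{\infty}\nu_k.
\end{equation*}

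**Passing to the minimum and finishing.**

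Then I would bound the sum below by $N$ times the minimum term, i.e. $\sum_{k=0}^{N-1}\|d^k\|^2\geq N\min\{\|d^k\|^2:k=0,\cdots,N-1\}$. Combining with the previous display and dividing by $\sigma N$ gives $\min\{\|d^k\|^2\}\leq\bigl(\phi(x^0)-\phi^*+\sum_{k=0}^{\infty}\nu_k\bigr)/(\sigma N)$; taking square roots (and using that the minimum of the squares is the square of the minimum, since norms are nonnegative) produces exactly \eqref{eq:comp_A2nd}. The final consequence is immediate: setting the right-hand side of \eqref{eq:comp_A2nd} at most $\epsilon$ and solving for $N$ gives $N\geq\bigl(\phi(x^0)-\phi^*+\sum_{k=0}^{\infty}\nu_k\bigr)/(\sigma\epsilon^2)$. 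Honestly there is no serious obstacle here—the argument is routine telescoping; the only points requiring care are ensuring the series $\sum\nu_k$ is finite (guaranteed by \ref{it:nusummable}, which is essential, since otherwise the right-hand side of \eqref{eq:comp_A2nd} would be infinite and vacuous) and correctly interchanging the minimum with the square root.
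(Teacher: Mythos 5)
Your proposal is correct and follows essentially the same route as the paper: summing the one-step decrease estimate \eqref{eq:CGnd}, telescoping, bounding $\phi(x^N)$ below by $\phi^*$, enlarging the finite sum of the $\nu_k$ to the full series, and then bounding the minimum by the average. No meaningful differences from the paper's argument.
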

\begin{proof} Since $\phi^{*}:=\inf _{x\in \mathbb{R}^{n}} \phi(x)\leq \phi (x^{k})$ for all $k\in\mathbb{N}$, from \eqref{eq:CGnd} we obtain that 
\begin{equation*}
\sum _{k=0}^{N-1}\|d^{k}\|^{2} \leq \frac{1}{\sigma}\Big(\phi(x^{0})-\phi(x^{N})+\sum _{k=0}^{N-1}\nu _{k}\Big) \leq \frac{1}{\sigma} \Big(\phi(x^{0})-\phi^{*}+\sum _{k=0}^{\infty}\nu _{k}\Big).
\end{equation*}
Therefore, $ N\min \{\| d^{k}\| ^{2}: ~k=0,1,\cdots,N-1\}\leq (\phi(x^{0})-\phi^{*}+\sum _{k=0}^{\infty}\nu _{k})/\sigma,$
and \eqref{eq:comp_A2nd} follows. The second statement is a directly consequence of the first one. 
\end{proof}
\begin{theorem}\label{th:comp_A3nd} 
Suppose that $(\nu_k)_{k\in\mathbb{N}}$ is chosen according to strategy \ref{it:nubound}. Let $0< \varsigma <1$ and $k_{0}\in \mathbb{N} $ such that $\nu _{k}\leq \varsigma \sigma\|d^{k}\|^{2}$, for all $k\geq k_{0}$. Then, for each $N\in\mathbb{N}$ such that $N> k_0$, one has
\begin{equation*}
\min \{\|d^{k}\| : k=0,1,\cdots,N-1\} \leq {\frac{ \sqrt{\phi(x^{0})-\phi^{*}+\sum _{k=0}^{k_0-1}\nu _{k}}}{ \sqrt{ (1-\varsigma)\sigma }}}\frac{1}{ \sqrt{N}}.
\end{equation*}
 Consequently, for a given $\epsilon>0$ and $k_{0}\in \mathbb{N}$ such that $\nu _{k}\leq \varsigma \sigma\|d^{k}\|^{2}$ for all $k\geq k_{0}$, if $N\geq \max\{k_0, ({ \phi (x^{0})-\phi ^{*}+\sum _{k=0}^{k_0-1}\nu _{k} })/(\sigma(1-\varsigma ) \epsilon^{2})\}$, then the following inequality holds $\min \left\{\| d^{k}\|: ~k=0,1,\cdots,N-1\right\}\leq \epsilon.$
\end{theorem}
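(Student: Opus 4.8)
The plan is to mirror the complexity argument used under strategy \ref{it:nusummable}, but to exploit the sharper per-iteration decrease that becomes available once $k\geq k_0$. The starting point is the basic decrease inequality \eqref{eq:CGnd}, namely $\sigma\|d^k\|^2\leq \phi(x^k)-\phi(x^{k+1})+\nu_k$ for all $k\in\mathbb{N}$, which is exactly Proposition~\ref{prop15u}\,\ref{it:prop15u:decrease}. First I would use the hypothesis $\nu_k\leq\varsigma\sigma\|d^k\|^2$, valid for $k\geq k_0$, to absorb the perturbation term: substituting this into \eqref{eq:CGnd} and rearranging yields the clean telescoping estimate $(1-\varsigma)\sigma\|d^k\|^2\leq \phi(x^k)-\phi(x^{k+1})$ for every $k\geq k_0$.

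Next I would split the sum $\sum_{k=0}^{N-1}\|d^k\|^2$ at the index $k_0$. For the tail $k_0\leq k\leq N-1$, summing the refined inequality telescopes to $(1-\varsigma)\sigma\sum_{k=k_0}^{N-1}\|d^k\|^2\leq \phi(x^{k_0})-\phi(x^N)$. For the initial block $0\leq k\leq k_0-1$ the refined estimate is not available, so I would keep the raw inequality \eqref{eq:CGnd}, which telescopes to $\sigma\sum_{k=0}^{k_0-1}\|d^k\|^2\leq \phi(x^0)-\phi(x^{k_0})+\sum_{k=0}^{k_0-1}\nu_k$. The key bookkeeping point is that, since $0<\varsigma<1$ gives $(1-\varsigma)\leq 1$, this initial estimate remains valid after replacing the leading $\sigma$ by the weaker constant $(1-\varsigma)\sigma$. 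With a common coefficient on both blocks, adding the two telescoped inequalities cancels the $\pm\phi(x^{k_0})$ terms and produces $(1-\varsigma)\sigma\sum_{k=0}^{N-1}\|d^k\|^2\leq \phi(x^0)-\phi(x^N)+\sum_{k=0}^{k_0-1}\nu_k$.

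To finish, I would bound $\phi(x^N)$ below by $\phi^*$ using \ref{it:phistarinf}, obtaining $(1-\varsigma)\sigma\sum_{k=0}^{N-1}\|d^k\|^2\leq \phi(x^0)-\phi^*+\sum_{k=0}^{k_0-1}\nu_k$, and then bound the sum below by $N\min\{\|d^k\|^2:~k=0,\dots,N-1\}$. Dividing by $(1-\varsigma)\sigma N$ and taking square roots delivers the stated estimate. The consequence for a prescribed accuracy $\epsilon>0$ then follows by requiring the right-hand side to be at most $\epsilon$ and solving for $N$, while simultaneously imposing $N>k_0$ so that the tail block over which the refined bound holds is nonempty; this gives the stated lower bound $N\geq\max\{k_0,\,(\phi(x^0)-\phi^*+\sum_{k=0}^{k_0-1}\nu_k)/(\sigma(1-\varsigma)\epsilon^2)\}$.

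The argument is essentially mechanical once \eqref{eq:CGnd} is in hand, so the only genuine subtlety---the part I would watch most carefully---is reconciling the two different leading constants $\sigma$ and $(1-\varsigma)\sigma$ across the split so that the telescoping collapses cleanly. This is precisely what forces the truncated sum $\sum_{k=0}^{k_0-1}\nu_k$ to appear in the numerator instead of a full series, and why the weaker constant $(1-\varsigma)\sigma$ must sit in the denominator rather than $\sigma$ itself.
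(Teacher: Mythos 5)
Your proposal is correct and follows essentially the same route as the paper: both rest on the decrease inequality \eqref{eq:CGnd}, absorb the terms $\nu_k\leq\varsigma\sigma\|d^k\|^2$ for $k\geq k_0$ into the left-hand side, and arrive at $(1-\varsigma)\sigma\sum_{k=0}^{N-1}\|d^k\|^2\leq\phi(x^0)-\phi^*+\sum_{k=0}^{k_0-1}\nu_k$ before dividing by $N$. The only difference is bookkeeping order — the paper sums first over all $k$ and then pads $\sum_{k=k_0}^{N-1}\varsigma\sigma\|d^k\|^2$ up to $\sum_{k=0}^{N-1}\varsigma\sigma\|d^k\|^2$, whereas you split the sum at $k_0$ and weaken the leading constant on the initial block — and both yield the identical final estimate.
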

\begin{proof} Let $\varsigma \in (0,1)$ and $k_{0}\in \mathbb{N} $ such that $\nu _{k}\leq \varsigma \sigma\|d^{k}\|^{2}$, for all $k\geq k_{0}$. 
It follows from \eqref{eq:CGnd} that $\sigma\|d^{k}\|^{2}\leq \phi (x^{k})-\phi (x^{k+1})+\nu _{k}$, for all $ k=0,1,\cdots, N-1$. Summing up the last inequality from $k=0$ to $k=N-1$ and using assumption \ref{it:phistarinf} we have
\begin{equation*}
\sigma\sum _{k=0}^{N-1}\|d^{k}\|^{2} \leq \phi (x^{0})-\phi ^{*}+\sum _{k=0}^{k_{0}-1} \nu _{k}+\sum _{k=k_{0}}^{N-1} \nu _{k}.
\end{equation*}
Hence, considering that $\nu _{k}\leq \varsigma \sigma\|d^{k}\|^{2}$, for all $k\geq k_{0}$, the last inequality becomes 
\begin{equation*}
\sum _{k=0}^{N-1}\sigma\|d^{k}\|^{2} \leq \phi (x^{0})-\phi ^{*}+\sum _{k=0}^{k_{0}-1} \nu _{k}+\sum _{k=0}^{N-1} \varsigma \sigma\|d^{k}\|^{2}, 
\end{equation*}
which is equivalent to $ \sum _{k=0}^{N-1} (1-\varsigma)\sigma\|d^{k}\|^{2} \leq \phi (x^{0})-\phi ^{*}+\sum _{k=0}^{k_{0}-1} \nu _{k}$. Therefore, we have $ N\min \{\| d^{k}\| ^{2}: ~k=0,1,\cdots,N-1\}\leq (\phi(x^{0})-\phi^{*}+\sum _{k=0}^{k_0-1}\nu _{k})/((1-\varsigma){\sigma})$, and the first inequality follows. The last inequality follows from the first one.
\end{proof}

\begin{remark}\label{rmk:ExemploA3} 
Theorem~\ref{th:comp_A3nd} may not seem very useful at first look, since the integer $k_0$ is not always known. However, specifically for the sequences $(\nu_{k})_{k\in \mathbb{N}}$ given in Example~\ref{eq:omega}, we are able to compute such integer $k_0$ explicitly. Indeed, given $\omega>0$ and $0<\varsigma <1$, if $\nu _{k}=\omega\|d^{k}\|^{2}/(k+1),$ then the integer $k_0$ such that $k\geq k_0$ implies $\nu_k\leq \varsigma \sigma\|d^k\|^{2}$ must satisfies $k_{0}\geq ({\omega}/{\varsigma\sigma})-1$. On the other hand, if $\nu _{k}={\omega}\|d^{k}\|^{2}/\ln (k+2),$ then some calculations show that $k_0\geq e^{({\omega}/{\varsigma \sigma})}-2.$
\end{remark}
\section{Convergence analysis: $g$ continuously differentiable} \label{sec5}
In this section we present an iteration-complexity analysis of nmBDCA when the function $g$ is continuously differentiable. We remark that in this section we just need to assume that $\nu _{k}\geq 0$. Hence, it is worth mentioning that, if $\nu _{k}= 0$ for all $k\in\mathbb{N}$, then non-monotone line search \eqref{eq:jku} merges into monotone line search \eqref{eq:BDCAjku}, i.e., Algorithm~\ref{Alg:ASSPM} is a natural extension of the BDCA introduced in \cite{ARAGON2019}. If $\nu _{k}> 0$, for all $k\in\mathbb{N}$, then nmBDCA can be viewed as an inexact version of BDCA. To proceed with the analysis of Algorithm~\ref{Alg:ASSPM} we need to assume, in addition to \ref{it:ghstronglyconvex} and \ref{it:phistarinf}, the following condition:

\begin{enumerate}[label={(\textbf{H\arabic*})}, start=3, ref={(H\arabic*)}]
	\item\label{it:gdiff}
		$g$ is continuously differentiable and $\nabla g$ is Lipschitz continuous with constant $L>0$.
\end{enumerate}

 Our first task is to establish the well-definition of Algorithm~\ref{Alg:ASSPM}, which will be done in the next proposition. 
\begin{proposition}\label{prop:gdif}
	Suppose that $g:\mathbb{R}^{n}\to \mathbb{R}$ satisfies \ref{it:gdiff}.
	For each $k\in\mathbb{N}$, assume that $d^{k}\neq 0$ and $\nu _{k}\geq 0$. Then, the following statements hold:
\begin{enumerate}[label={({\roman*})}, ref={(\roman*)}]
	\item\label{it:gdif:deltak}
		$\phi '(y^{k};d^{k})\leq -\sigma \|d^{k}\|^{2}<0$ and there exists a constant ${\delta}_{k}>0$ such that $\phi (y^{k}+\lambda d^{k})\leq \phi (y^{k}) - \rho\lambda^2 \| d^{k}\| ^{2}+\nu_k$, for all $\lambda \in (0,{\delta}_{k}]$. 
	Consequently, the line search in Step 4 is well-defined.
	\item\label{it:gdif:ineq}
		$\phi (x^{k+1}) \leq \phi (x^{k}) - (\sigma+\rho\lambda_k^2) \| d^{k}\| ^{2}+\nu_k$.
\end{enumerate}
\end{proposition}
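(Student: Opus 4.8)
The plan is to mirror the structure of the proof of Proposition~\ref{prop15u}, but now exploit the differentiability of $g$ to obtain a genuine descent direction at $y^k$, which is exactly what fails in the non-differentiable case. The key new ingredient is Theorem~\ref{subdif_DC}, which tells us that since $g$ is differentiable, $\partial_c(g-h)(y^k)=\{\nabla g(y^k)\}-\partial h(y^k)$, so the directional derivative $\phi'(y^k;d^k)$ can be computed explicitly in terms of $\nabla g(y^k)$ and subgradients of $h$.

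For item~\ref{it:gdif:deltak}, I would first establish the descent inequality $\phi'(y^k;d^k)\leq -\sigma\|d^k\|^2$. Take $w^k\in\partial h(x^k)$; by the characterization \eqref{eq:charyk}, $w^k=\nabla g(y^k)$. The directional derivative is $\phi'(y^k;d^k)=\langle\nabla g(y^k),d^k\rangle-h'(y^k;d^k)$, and since $h$ is convex, $h'(y^k;d^k)\geq\langle s,d^k\rangle$ holds with the reverse already available from subgradients; more precisely I would use that $h'(y^k;d^k)=\max_{s\in\partial h(y^k)}\langle s,d^k\rangle\geq\langle w^k,d^k\rangle+\sigma\|d^k\|^2$, invoking Theorem~\ref{teo2}\,\ref{it:teo2:inner-geq} applied to $w^k\in\partial h(x^k)$ and any $s\in\partial h(y^k)$ exactly as in \eqref{eq:sscs}. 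Combining with $\langle\nabla g(y^k),d^k\rangle=\langle w^k,d^k\rangle$ yields $\phi'(y^k;d^k)\leq\langle w^k,d^k\rangle-\langle w^k,d^k\rangle-\sigma\|d^k\|^2=-\sigma\|d^k\|^2<0$. Having a strictly negative directional derivative, I then show the line search terminates: writing $\phi(y^k+\lambda d^k)-\phi(y^k)=\lambda\phi'(y^k;d^k)+o(\lambda)$ and adding $\rho\lambda^2\|d^k\|^2-\nu_k$, the dominant term is $\lambda\phi'(y^k;d^k)\leq-\sigma\lambda\|d^k\|^2<0$, so for all sufficiently small $\lambda$ the quantity $\phi(y^k+\lambda d^k)-\phi(y^k)+\rho\lambda^2\|d^k\|^2-\nu_k$ is negative; this produces the claimed $\delta_k>0$ and, since $\zeta^j\lambda_{k-1}\to 0$, guarantees the Armijo-type condition \eqref{eq:jku} holds for some finite $j_k$, so Step~4 is well-defined. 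Here, unlike the non-differentiable case, the argument works even when $\nu_k=0$, because $\phi'(y^k;d^k)<0$ already forces a decrease.

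For item~\ref{it:gdif:ineq}, the reasoning is identical to the end of the proof of Proposition~\ref{prop15u}: item~\ref{it:gdif:deltak} makes the line search well-defined, so the accepted step $\lambda_k$ satisfies $\phi(y^k+\lambda_k d^k)\leq\phi(y^k)-\rho\lambda_k^2\|d^k\|^2+\nu_k$; combining this with the DCA decrease $\phi(y^k)\leq\phi(x^k)-\sigma\|d^k\|^2$ from Proposition~\ref{pr:ffr}\,\ref{it:ffr:ineq} and recalling $x^{k+1}=y^k+\lambda_k d^k$ gives $\phi(x^{k+1})\leq\phi(x^k)-(\sigma+\rho\lambda_k^2)\|d^k\|^2+\nu_k$.

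The main obstacle is the rigorous computation of the directional derivative and the careful interchange giving $\phi'(y^k;d^k)=\langle\nabla g(y^k),d^k\rangle-h'(y^k;d^k)$; one must be sure the convex calculus applies (it does, since $g$ is differentiable so $\phi$ is the difference of a smooth and a convex function, and the directional derivative of such a difference splits cleanly). The rest is the standard Armijo termination argument, where the only subtlety worth flagging is that the $-\sigma\|d^k\|^2$ slope dominates the $O(\lambda^2)$ penalty $\rho\lambda^2\|d^k\|^2$ for small $\lambda$; I would make this explicit rather than merely invoking differentiability, so that the existence of $\delta_k$ is transparent and the claim that \eqref{eq:jku} admits a finite minimizer $j_k$ follows immediately.
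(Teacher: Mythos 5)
Your proof is correct and follows essentially the same route as the paper: the paper's item~\ref{it:gdif:deltak} is a direct citation of \cite[Proposition 3.1(ii)-(iii)]{ARAGON2019} plus the observation that $\nu_k\geq 0$ only relaxes the Armijo test, and your argument simply writes out in full what that cited result establishes (the max formula for $h'(y^k;d^k)$, strong monotonicity of $\partial h$ via Theorem~\ref{teo2}\,\ref{it:teo2:inner-geq}, and the identification $\nabla g(y^k)=w^k$ from \eqref{eq:charyk}). Item~\ref{it:gdif:ineq} is handled exactly as in the paper, by combining the accepted line-search inequality with Proposition~\ref{pr:ffr}\,\ref{it:ffr:ineq}.
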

\begin{proof}
The proof of item~\ref{it:gdif:deltak} follows from \cite[Proposition 3.1(ii)-(iii)]{ARAGON2019} together with the fact of $\nu _{k}\geq 0$. Finally, the proof of item~\ref{it:gdif:ineq} follows from Proposition~\ref{pr:ffr}\,\ref{it:ffr:ineq} and item~\ref{it:gdif:deltak}.
\end{proof}
In the sequel, we will establish a positive lower bound to the step-size $\lambda _{k}>0$ defined in {Step 4} of Algorithm~\ref{Alg:ASSPM} when $g$ satisfies \ref{it:gdiff}. Before proving such result, we will obtain a result that generalizes Lemma~\ref{eq:IneqLip} for DC functions. In fact, instead of assuming that the whole function $\phi=g-h$ has gradient Lipschitz, we assume that only the first DC component $g$ has such a property. The statement is as follows. 
\begin{lemma}\label{le:LipcLeDC}
Let $\phi:\mathbb{R}^{n}\to \mathbb{R}$ be given by $\phi(x)=g(x)-h(x)$, where $g$ satisfies \ref{it:gdiff} and $h$ is convex. Then, for all $x, d\in \mathbb{R}^n$ and $\lambda \in \mathbb{R}$, there holds 
\begin{equation*}
\phi (x+ \lambda d) \leq \phi (x) +\lambda \left\langle \nabla g(x)-w, d \right\rangle + \frac{L}{2} \lambda^2 \|d\|^2, \qquad \forall w \in \partial h(x).
\end{equation*}
Moreover, if $h$ is strongly convex with modulus $\sigma>0$, then
\begin{equation*} 
\phi (x+ \lambda d) \leq \phi (x) +\lambda \left\langle \nabla g(x)-w, d \right\rangle + \frac{(L-\sigma)}{2} \lambda^2 \|d\|^2, \qquad \forall w \in \partial h(x).
\end{equation*}
\end{lemma}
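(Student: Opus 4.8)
The plan is to decompose $\phi(x+\lambda d) = g(x+\lambda d) - h(x+\lambda d)$ and estimate the two DC components separately, exploiting their different structure. The gradient-Lipschitz property of $g$ furnishes an \emph{upper} bound for $g(x+\lambda d)$, while the (strong) convexity of $h$ furnishes a \emph{lower} bound for $h(x+\lambda d)$; since $h$ enters $\phi$ with a minus sign, the lower bound on $h$ flips into an upper bound on $-h$, and the two estimates add up cleanly to the claimed inequalities.

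First I would bound the smooth component. Because $g$ satisfies \ref{it:gdiff}, the Descent Lemma (Lemma~\ref{eq:IneqLip}) applies verbatim to $g$ and yields
\[
g(x+\lambda d) \leq g(x) + \lambda \langle \nabla g(x), d \rangle + \frac{L}{2}\lambda^2 \|d\|^2
\]
for all $x,d\in\mathbb{R}^n$ and $\lambda\in\mathbb{R}$.

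Next I would bound the concave part $-h$. For the first inequality $h$ is merely convex, so for any $w\in\partial h(x)$ the subgradient inequality—equivalently Theorem~\ref{teo2}\,\ref{it:teo2:fct-geq} in the degenerate case $\sigma=0$ recorded right after the definition of strong convexity—gives $h(x+\lambda d) \geq h(x) + \lambda\langle w, d\rangle$, whence $-h(x+\lambda d) \leq -h(x) - \lambda\langle w, d\rangle$. Adding this to the Descent Lemma bound for $g$ and regrouping the two linear contributions into the single term $\lambda\langle \nabla g(x)-w, d\rangle$ produces the first claimed inequality. For the second inequality I would instead invoke the strong-convexity form of Theorem~\ref{teo2}\,\ref{it:teo2:fct-geq} with modulus $\sigma>0$, which adds the extra term $\frac{\sigma}{2}\lambda^2\|d\|^2$ to the lower bound on $h$; carried through the minus sign of $-h$, this term subtracts from the curvature coefficient and converts $\frac{L}{2}$ into $\frac{L-\sigma}{2}$, giving the refined estimate.

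Since the result is just an additive combination of two standard one-sided estimates, I do not expect any substantive obstacle. The only points demanding care are the correct propagation of signs when passing from a lower bound on $h$ to an upper bound on $-h$, and the observation that the subgradient inequality for convex $h$ is precisely the $\sigma=0$ instance already available in the preliminaries, so that both the convex and the strongly convex cases are handled by a single tool applied with the appropriate modulus.
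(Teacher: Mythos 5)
Your proposal is correct and follows essentially the same route as the paper: an upper bound on $g(x+\lambda d)$ from the Descent Lemma combined with the (strong) subgradient lower bound on $h(x+\lambda d)$, the latter flipping sign when passed to $-h$. The paper's only cosmetic difference is that it applies the Descent Lemma to the tilted function $\psi(z)=g(z)-\langle w,z\rangle$ rather than to $g$ directly, which yields the identical inequality after regrouping the linear terms.
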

\begin{proof}
Let $x\in \mathbb{R}^{n}$ and an arbitrary $w \in \partial h(x)$. Define the function $\psi :\mathbb{R}^{n}\to \mathbb{R}$ by $\psi (z)=g(z)-\langle w,z\rangle$. Thus, we have $\nabla \psi(z)= \nabla g(z)-w$ and, since $\nabla g$ is Lipschitz continuous with constant $L$ we obtain that $\nabla \psi $ is also Lipschitz continuous with constant $L$. Given $d\in \mathbb{R}^n$ and $\lambda \in \mathbb{R}$, by using Lemma \ref{eq:IneqLip} with $\phi=\psi$, we obtain that $\psi(x+\lambda d)\leq \psi(x) + \lambda \langle \nabla g(x)-w, d \rangle + L\lambda^2\|d\|^2/2$. Since $\psi(z)=g(z)-\langle w,z\rangle$, the last inequality is equivalent to 
\begin{align}\label{eq:dsclemmaDC}
g(x+\lambda d) \leq g(x)+\lambda \langle w, d \rangle + \lambda \langle \nabla g(x)-w,d \rangle + \frac{L}{2}\lambda^2\|d\|^{2}.
\end{align}
 Since $h$ is convex and $w\in \partial h(x)$, we have $\lambda \langle w,d \rangle\leq h(x+\lambda d)-h(x)$. Thus, by using \eqref{eq:dsclemmaDC}, we obtain that $g(x+\lambda d)-h(x+\lambda d) \leq g(x)-h(x) + \lambda \langle \nabla g(x)-w, d \rangle + L\lambda^2\|d\|^{2}/2$. Due to $\phi=g-h$, the last inequality is equivalent to the first assertion. On the other hand, if we assume that $h$ is strongly convex with modulus $\sigma> 0$ and $w\in \partial h(x),$ it follows from item~$(ii)$ of Theorem~\ref{teo2} that $\lambda \langle w,d \rangle\leq h(x+\lambda d)-h(x)-\sigma\lambda^2\|d\|^2/2 $. Hence, the last inequality together \eqref{eq:dsclemmaDC} yield 
$g(x+\lambda d)-h(x+\lambda d) \leq g(x)-h(x) + \lambda \langle \nabla g(x)-w, d \rangle + (L-\sigma)\lambda^2\|d\|^{2}/2$. Therefore, taking into account that $\phi=g-h$ the proof is concluded.
\end{proof} 
\begin{remark}
It is worth to note that in Lemma~\ref{le:LipcLeDC} it is sufficient to assume \ref{it:gdiff}. In this case {\cite[Corollary of Proposition 2.2.1, p. 32]{clarke1983optimization}} ensures that if $g$ is continuously differentiable, then $g$ is locally Lipschitz. Hence, by Theorem~\ref{subdif_DC},
we have $\partial _{c}\phi(x)=\{\nabla g(x)\}-\partial h(x)$, and the desired result follows. We also note that the Lemma~\ref{le:LipcLeDC} generalizes Lemma~\ref{eq:IneqLip}. Indeed, taking $h\equiv 0$ in the first part of Lemma~\ref{le:LipcLeDC}, it becomes Lemma~\ref{eq:IneqLip}.
\end{remark}

Before stating the next result, we need to define the following useful constant:
\begin{equation}\label{eq:thetamu}
\lambda _{\min}:=\min \left\lbrace \lambda _{-1},\frac{2\zeta\sigma}{(L+2\rho)}\right\rbrace.
\end{equation}

\begin{lemma}\label{le:lmin} If $g$ satisfies \ref{it:gdiff}, then $\lambda _{k}\geq \lambda _{\min},$ for all $k\in \mathbb{N}.$
\end{lemma}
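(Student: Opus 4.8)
The plan is to run a standard Armijo-type backtracking estimate on the \emph{last rejected} trial step and then propagate the resulting lower bound by induction to cover the case $j_k=0$, where no backtracking occurs. Fix $k$ with $d^k\neq 0$. Since $g$ satisfies \ref{it:gdiff}, the characterization \eqref{eq:charyk} reduces to $\nabla g(y^k)=w^k$, while by construction $w^k\in\partial h(x^k)$. The first ingredient I would extract is the inner-product estimate
\begin{equation*}
\langle \nabla g(y^k)-w, d^k\rangle \leq -\sigma\|d^k\|^2, \qquad \forall w\in\partial h(y^k).
\end{equation*}
This follows from Theorem~\ref{teo2}\,\ref{it:teo2:inner-geq} applied to the strongly convex $h$: taking $w\in\partial h(y^k)$ and $w^k\in\partial h(x^k)$ gives $\langle w-w^k, y^k-x^k\rangle\geq \sigma\|d^k\|^2$, which upon using $\nabla g(y^k)=w^k$ and $d^k=y^k-x^k$ yields the claim.

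Next I would combine this with the descent-type inequality for DC functions. Applying Lemma~\ref{le:LipcLeDC} (with the constant $L$) at the point $y^k$ along $d^k$ and inserting the inner-product estimate above for a fixed $w\in\partial h(y^k)$ gives, for every $\lambda>0$,
\begin{equation*}
\phi(y^k+\lambda d^k)\leq \phi(y^k) -\sigma\lambda\|d^k\|^2 + \frac{L}{2}\lambda^2\|d^k\|^2.
\end{equation*}
Now suppose $j_k\geq 1$ and set $\bar\lambda:=\zeta^{j_k-1}\lambda_{k-1}=\lambda_k/\zeta$. By minimality of $j_k$, the exponent $j_k-1$ violates \eqref{eq:jku}; since $\nu_k\geq 0$, dropping $\nu_k$ leaves $\phi(y^k+\bar\lambda d^k)>\phi(y^k)-\rho\bar\lambda^2\|d^k\|^2$. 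Chaining this with the displayed descent inequality at $\lambda=\bar\lambda$, cancelling $\phi(y^k)$, and dividing by $\bar\lambda\|d^k\|^2>0$ leaves $-\rho\bar\lambda<-\sigma+\tfrac{L}{2}\bar\lambda$, i.e. $\bar\lambda>2\sigma/(L+2\rho)$. Hence $\lambda_k=\zeta\bar\lambda>2\zeta\sigma/(L+2\rho)\geq\lambda_{\min}$, the last step using the definition \eqref{eq:thetamu}.

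It remains to handle $j_k=0$, in which case $\lambda_k=\lambda_{k-1}$ and the backtracking estimate is vacuous; this is where I would invoke induction on $k$. For the base case $k=0$, either $j_0\geq 1$, which is handled above, or $j_0=0$ and then $\lambda_0=\lambda_{-1}\geq\lambda_{\min}$ directly from \eqref{eq:thetamu}. For the inductive step, assuming $\lambda_{k-1}\geq\lambda_{\min}$: if $j_k\geq 1$ the bound follows from the previous paragraph, while if $j_k=0$ then $\lambda_k=\lambda_{k-1}\geq\lambda_{\min}$ by hypothesis. Thus $\lambda_k\geq\lambda_{\min}$ for all $k\in\mathbb{N}$. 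I expect the only genuinely delicate point to be precisely this $j_k=0$ case: the line-search failure constrains the step only when backtracking actually takes place, so without the inductive propagation one could not control the step when the previous size is accepted unchanged. Everything else is a routine chain of the strong-convexity and Lipschitz estimates already available.
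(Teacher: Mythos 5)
Your proposal is correct and follows essentially the same route as the paper: induction on $k$, with the $j_k=0$ case settled by the induction hypothesis (or by $\lambda_0=\lambda_{-1}\geq\lambda_{\min}$ at $k=0$), and the $j_k\geq 1$ case settled by combining the violated line-search condition at exponent $j_k-1$ with Lemma~\ref{le:LipcLeDC} and the strong-convexity estimate $\langle\nabla g(y^k)-w,d^k\rangle\leq-\sigma\|d^k\|^2$ from Theorem~\ref{teo2}\,\ref{it:teo2:inner-geq}. All steps, including dropping $\nu_k\geq 0$ and the final algebra giving $\bar\lambda>2\sigma/(L+2\rho)$, match the paper's argument.
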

\begin{proof} We will show by induction on $k$ that $\lambda _{k}\geq \lambda _{\min},$ for all $k\in \mathbb{N}.$ Set $k=0$. If $j_{0}=0$, then $\lambda _{0}=\lambda _{-1}$. Thus, \eqref{eq:thetamu} implies that $\lambda _{0}\geq \lambda _{\min}$. Otherwise, assume that $j _{0}>0$. Since $\lambda _{0}=\zeta^{j_0}{\lambda _{-1}}$ we conclude from \eqref{eq:jku} that 
\begin{equation}\label{eq:lmin10}
\phi \left(y^{0} + \frac{\lambda _{0}}{\zeta}d^{0} \right)- \phi (y^{0})> - \rho \frac{\lambda_{0}^{2}}{\zeta^{2}}\|d^{0}\|^{2} + \nu _{0}.
\end{equation}
On the other hand, using Lemma~\ref{le:LipcLeDC} with $x=y^{0}$, $\lambda = \lambda_{0}/\zeta$ and $d=d^{0}$ we obtain
\begin{equation}\label{eq:lmin19.5}
\phi \left( y^{0}+\frac{\lambda_{0}}{\zeta} d^{0} \right)-\phi (y^{0}) \leq \frac{\lambda_{0}}{\zeta} \langle \nabla g(y^{0})-s^{0}, d^{0} \rangle + \frac{L}{2}\frac{\lambda_{0}^{2}}{\zeta^{2}}\| d^{0}\| ^{2},\qquad \forall s^{0}\in \partial h(y^{0}).
\end{equation}
 From \eqref{eq:charyk} we have $\nabla g(y^{0})=w^{0}\in \partial h(x^{0}).$ Thus, from the strong convexity of $h$ and Theorem~\ref{teo2}\,\ref{it:teo2:inner-geq} we have
$\langle \nabla g(y^{0})-s^{0}, d^{0} \rangle = \langle w^{0}-s^{0},y^{0}-x^{0} \rangle \leq -\sigma \|d^{0}\|^{2}.$ Therefore, since $\nu _{0}\geq 0,$ we obtain from the last inequality and \eqref{eq:lmin19.5} that
\begin{equation}\label{eq:lmin20}
\phi \left( y^{0}+\frac{\lambda_{0}}{\zeta} d^{0} \right)-\phi (y^{0}) \leq -\frac{\lambda_{0}\sigma}{\zeta} \| d^{0}\|^{2} + \frac{L}{2}\frac{\lambda_{0}^{2}}{\zeta^{2}}\| d^{0}\| ^{2}+\nu_{0}.
\end{equation}
Combining \eqref{eq:lmin10} and \eqref{eq:lmin20} we have
\begin{equation*}
- \frac{\lambda_{0}^{2}\rho }{\zeta^{2}}\|d^{0}\|^{2}< -\frac{ \lambda _{0} \sigma }{ \zeta} \|d^{0}\|^{2} + \frac{L}{2}\frac{\lambda_{0}^{2}}{\zeta^{2}}\| d^{0}\| ^{2} .
\end{equation*}
Hence, considering that $\rho>0$, $L>0$ and $d^{0}\neq 0$, some algebraic manipulations show that $\lambda_{0} > 2 \zeta \sigma /(L +2\rho)\geq \lambda_{\min}$. Therefore, the inequality holds for $k=0$. Now, we assume that $\lambda _{k-1}\geq \lambda _{\min}$ for some $k>0$. If $j^{k}=0$, then $\lambda _{k}=\lambda _{k-1}\geq \lambda _{\min}.$ Otherwise, if $j_{k}>0$, then repeating the above argument with $\lambda_0$ replaced by $\lambda_{k-1}$, we obtain that $\lambda_{k} > 2 \zeta \sigma /(L +2\rho)\geq \lambda_{\min},$ which completes the proof. 
\end{proof}
\begin{corollary}\label{coro:lmin} Assume that the function $g$ satisfies conditions \ref{it:gdiff}. Then $(\sigma + \rho \lambda _{\min}^{2})\|d^{k}\|^{2}\leq \phi (x^{k})-\phi (x^{k+1})+\nu _{k}$, for all $k\in \mathbb{N}$. 
\end{corollary}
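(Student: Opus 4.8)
The plan is to combine the per-iteration decrease estimate already established in Proposition~\ref{prop:gdif}\,\ref{it:gdif:ineq} with the uniform lower bound on the step-size provided by Lemma~\ref{le:lmin}. Since both ingredients are in place, the argument reduces to a short monotonicity observation, and the statement is essentially a repackaging of the previous results into a step-size-independent form.

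First I would recall that Proposition~\ref{prop:gdif}\,\ref{it:gdif:ineq}, under assumption \ref{it:gdiff}, gives for every $k\in\mathbb{N}$ the inequality
\begin{equation*}
(\sigma+\rho\lambda_k^2)\|d^k\|^2 \leq \phi(x^k)-\phi(x^{k+1})+\nu_k.
\end{equation*}
Next I would invoke Lemma~\ref{le:lmin}, which guarantees $\lambda_k\geq\lambda_{\min}$ for all $k\in\mathbb{N}$. Because $\lambda_{\min}>0$ and the squaring map is nondecreasing on $\mathbb{R}_{+}$, this yields $\lambda_k^2\geq\lambda_{\min}^2$, and then, using $\rho>0$, the estimate $\sigma+\rho\lambda_k^2\geq\sigma+\rho\lambda_{\min}^2$. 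Multiplying by the nonnegative quantity $\|d^k\|^2$ and chaining with the displayed decrease inequality gives
\begin{equation*}
(\sigma+\rho\lambda_{\min}^2)\|d^k\|^2 \leq (\sigma+\rho\lambda_k^2)\|d^k\|^2 \leq \phi(x^k)-\phi(x^{k+1})+\nu_k,
\end{equation*}
which is exactly the claimed bound.

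There is no substantive obstacle at this stage: the genuine technical work was carried out earlier, in deriving the step-size lower bound of Lemma~\ref{le:lmin} (which rests on the DC descent estimate of Lemma~\ref{le:LipcLeDC} together with the strong convexity of $h$ and the characterization \eqref{eq:charyk}). The corollary merely absorbs that lower bound so as to remove the dependence on the individual $\lambda_k$, producing the uniform decrease inequality needed to drive the subsequent iteration-complexity analysis. The only point requiring care is to verify the hypotheses $\rho>0$ and $\lambda_{\min}>0$, so that the passage from $\lambda_k$ to $\lambda_{\min}$ preserves the inequality; both hold by the choice of parameters in Step~1 of Algorithm~\ref{Alg:ASSPM} and by the definition \eqref{eq:thetamu} of $\lambda_{\min}$.
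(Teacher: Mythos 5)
Your proposal is correct and follows exactly the paper's own argument: the paper proves this corollary by citing Proposition~\ref{prop:gdif}\,\ref{it:gdif:ineq} together with Lemma~\ref{le:lmin}, and your chaining of $(\sigma+\rho\lambda_{\min}^2)\|d^k\|^2 \leq (\sigma+\rho\lambda_k^2)\|d^k\|^2$ into the decrease estimate is precisely the intended (and only) step.
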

\begin{proof}
It follows from Proposition~\ref{prop:gdif}\,\ref{it:gdif:ineq} and Lemma~\ref{le:lmin}.
\end{proof}
\subsection{Iteration complexity bounds} 
The aim of this section is to present some iteration-complexity bounds for the sequence $(x^{k}) _{k\in \mathbb{N}}$ generated by Algorithm~\ref{Alg:ASSPM} in the case that $g$ is differentiable. 
\begin{lemma} \label{eq:nfeas} Suppose that $g$ satisfies \ref{it:gdiff}. Let $j_{k}\in \mathbb{N}$ be the integer defined in \eqref{eq:jku}, and $J_{k}$ be the number of function evaluations $\phi$ in \eqref{eq:jku} after $k\geq 1$ iterations of Algorithm \ref{Alg:ASSPM}. Then, $j_{k}\leq {({\log \lambda_{\min}-\log {\lambda _{-1}} })}/{\log \zeta} $
and 
\begin{equation*}
J_{k}\leq 2(k+1)+ \frac{\log \lambda_{\min} - \log {\lambda _{-1}}}{\log \zeta} .
\end{equation*}
\end{lemma}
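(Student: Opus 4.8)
The plan is to exploit the explicit form of the step-size, $\lambda_k = \zeta^{j_k}\lambda_{k-1}$, together with the uniform lower bound $\lambda_k \geq \lambda_{\min}$ just established in Lemma~\ref{le:lmin}. First I would rewrite the step-size rule as an identity for $j_k$: since $\lambda_k = \zeta^{j_k}\lambda_{k-1}$ with $\zeta \in (0,1)$, taking logarithms gives $j_k \log\zeta = \log\lambda_k - \log\lambda_{k-1}$, that is $j_k = (\log\lambda_k - \log\lambda_{k-1})/\log\zeta$. Two elementary facts about the sequence $(\lambda_k)_{k\in\mathbb{N}}$ then drive everything: first, it is non-increasing, because $j_k \geq 0$ forces $\zeta^{j_k}\leq 1$ and hence $\lambda_k \leq \lambda_{k-1}$, so in particular $\lambda_{k-1}\leq \lambda_{-1}$ for every $k$; second, $\lambda_k \geq \lambda_{\min}$ by Lemma~\ref{le:lmin}.

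Combining these two facts yields $\log\lambda_k - \log\lambda_{k-1} \geq \log\lambda_{\min} - \log\lambda_{-1}$, and dividing by $\log\zeta < 0$ (which reverses the inequality) gives the claimed bound $j_k \leq (\log\lambda_{\min} - \log\lambda_{-1})/\log\zeta$. For the bound on $J_k$ I would first account for the cost of each individual line search. At iteration $i$, determining $j_i$ requires testing $j = 0,1,\ldots,j_i$ in \eqref{eq:jku}, which amounts to $j_i + 1$ evaluations of $\phi$ at the trial points $y^{i} + \zeta^{j}\lambda_{i-1}d^{i}$, plus one evaluation of $\phi(y^{i})$ for the right-hand side; thus at most $j_i + 2$ evaluations per iteration. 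Summing over the $k+1$ iterations $i = 0,\ldots,k$ gives $J_k \leq 2(k+1) + \sum_{i=0}^{k} j_i$.

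The key step, and the place where a naive estimate would fail, is bounding $\sum_{i=0}^{k} j_i$. Estimating each $j_i$ separately by the bound just proved would produce an unwanted factor $(k+1)$ multiplying the logarithmic term. Instead I would use the identity $j_i = (\log\lambda_i - \log\lambda_{i-1})/\log\zeta$ and telescope, obtaining $\sum_{i=0}^{k} j_i = (\log\lambda_k - \log\lambda_{-1})/\log\zeta$. Since $\lambda_k \geq \lambda_{\min}$ by Lemma~\ref{le:lmin} and $\log\zeta < 0$, this sum is at most $(\log\lambda_{\min} - \log\lambda_{-1})/\log\zeta$, and substituting into the previous display yields the stated bound on $J_k$. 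The main obstacle is precisely recognizing that the per-iteration costs must be collapsed via the telescoping identity rather than bounded termwise; everything else is routine logarithmic bookkeeping.
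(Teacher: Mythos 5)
Your proposal is correct and follows essentially the same route as the paper: the identity $j_k=(\log\lambda_k-\log\lambda_{k-1})/\log\zeta$ combined with the bounds $\lambda_{\min}\leq\lambda_k$ and $\lambda_{k-1}\leq\lambda_{-1}$, and then the telescoping of $\sum_{\ell=0}^{k}j_\ell$ to avoid the extra factor of $k+1$, exactly as in the paper's proof. The only cosmetic difference is that you justify the count of $j_i+2$ evaluations per iteration explicitly, whereas the paper takes it as the definition of $J_k$.
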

\begin{proof} It follows from the {step~4} in Algorithm~\ref{Alg:ASSPM} together with Lemma~\ref{le:lmin} that $0< \lambda_{\min}\leq \lambda _{k}= \zeta ^{j_{k}}{\lambda _{k-1}}\leq \lambda _{-1}$, for all $k\in \mathbb{N}$. Thus, taking logarithm in last inequalities we obtain that $\log \lambda_{\min}\leq\log \lambda _{k}=j_{k} \log \zeta + \log {\lambda _{k-1}}\leq \log {\lambda _{-1}}$, for all $k\in \mathbb{N}$. Hence, taking into account that $\zeta \in (0,1)$ and $ 0<\lambda _{k-1}\leq \lambda _{-1}$, we have 
\begin{equation*}
 j_{k}= \frac{\log \lambda _{k}-\log {\lambda _{k-1}}}{\log \zeta} \leq \frac{\log \lambda_{\min}-\log {\lambda _{k-1}}}{ \log \zeta } \leq \frac{\log \lambda_{\min}-\log {\lambda _{-1}}}{ \log \zeta }, \quad \forall k\in \mathbb{N}.
\end{equation*}
This prove the first inequality. To prove the second assertion, we sum up the above inequality from $l=0$ to $k$ and we obtain 
 \begin{align*}
\sum _{\ell=0}^{k}j_{\ell} = \sum _{\ell =0}^{k} \frac{\log \lambda_{\ell}-\log {\lambda _{\ell -1}} }{\log \zeta} 
 =\frac{\log \ \lambda_{k } - \log {\lambda _{-1}}}{\log \zeta} \leq \frac{\log \lambda_{min } - \log {\lambda _{-1}}}{\log \zeta}.
\end{align*}
On the other hand, the definition of $J_{k}$ implies that $J_{k}= \sum _{\ell=0}^{k}(j_{\ell}+2)=2(k+1)+\sum _{\ell=0}^{k}j_{\ell}$. Therefore, 
by using the last inequality the desired inequality follows. 
\end{proof}
Next results establish iteration-complexity bounds when $(\nu _{k})_{k\in \mathbb{N}}$ is summable.
\begin{theorem}\label{th:comp_A2} 
Suppose that $(\nu_k)_{k\in\mathbb{N}}$ is chosen according to strategy \ref{it:nusummable} and $g$ satisfies \ref{it:gdiff}. For each $N\in\mathbb{N},$ we have
\begin{equation}\label{eq:comp_A2}
\min \left\{\| d^{k}\|: ~k=0,1,\cdots,N-1\right\}\leq {\frac{ \sqrt{\phi(x^{0})-\phi^{*}+\sum _{k=0}^{\infty}\nu _{k}}}{ \sqrt{ \sigma + \rho \lambda _{\min}^{2} }}}\frac{1}{ \sqrt{N}}.
\end{equation}
Consequently, for a given $\epsilon>0$, if $ N\geq { \big(\phi (x^{0})-\phi ^{*}+\sum _{k=0}^{\infty}\nu _{k} \big)}/{((\sigma + \rho \lambda _{\min}^{2}) \epsilon ^{2})}$, then
$\min \left\{\| d^{k}\|: ~k=0,1,\cdots,N-1\right\}\leq \epsilon.$
\end{theorem}
\begin{proof} Since $\phi^{*}:=\inf _{x\in \mathbb{R}^{n}} \phi(x)\leq \phi (x^{k})$ for all $k\in\mathbb{N}$, from Corollary~\ref{coro:lmin}, we obtain
\begin{equation*}
(\sigma + \rho \lambda _{\min}^{2})\sum _{k=0}^{N-1}\|d^{k}\|^{2} \leq \phi(x^{0})-\phi(x^{N+1})+\sum _{k=0}^{N-1}\nu _{k} \leq \phi(x^{0})-\phi^{*}+\sum _{k=0}^{\infty}\nu _{k}.
\end{equation*}
Thus,
\begin{equation*}
 N\min \{\| d^{k}\| ^{2}: ~k=0,1,\cdots,N-1\}
 \leq
 \frac{ \phi(x^{0})-\phi^{*}+\sum_{k=0}^{\infty}\nu _{k}}{ \sigma +\rho\lambda_{\min}^{2}},
\end{equation*}
and \eqref{eq:comp_A2} follows. The second statement is an immediately consequence of the first one. 
\end{proof}
\begin{theorem} Suppose that $(\nu_k)_{k\in\mathbb{N}}$ is chosen according to strategy \ref{it:nusummable} and $g$ satisfies \ref{it:gdiff}. For a given $\epsilon>0$, the number of function evaluations $\phi$ in Algorithm \ref{Alg:ASSPM} to compute $d^k$ such that $\| d^{k}\|\leq \epsilon$ is at most 
 \begin{equation*}
 2\Big(\frac{\phi (x^{0})-\phi ^{*}+\sum _{k=0}^{\infty}\nu _{k}}{(\sigma +\rho\lambda_{\min}^{2}) \epsilon ^{2}}+1\Big)+ \frac{\log \lambda_{\min} - \log {\lambda _{-1}}}{\log \zeta}.
 \end{equation*}
\end{theorem}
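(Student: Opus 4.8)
The plan is to combine two results already proved in this section: the iteration-complexity estimate of Theorem~\ref{th:comp_A2}, which tells us how many iterations suffice to produce a direction of norm at most $\epsilon$, and the per-iteration bound on function evaluations from Lemma~\ref{eq:nfeas}. No new analytic ingredient is required; the lower bound $\lambda_{k}\geq\lambda_{\min}$ underlying both statements has already been established in Lemma~\ref{le:lmin} and Corollary~\ref{coro:lmin}. What remains is essentially an index-bookkeeping exercise.

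First I would set
\begin{equation*}
N_{\epsilon}:=\frac{\phi(x^{0})-\phi^{*}+\sum_{k=0}^{\infty}\nu_{k}}{(\sigma+\rho\lambda_{\min}^{2})\epsilon^{2}},\qquad N:=\lceil N_{\epsilon}\rceil .
\end{equation*}
By the second statement of Theorem~\ref{th:comp_A2}, since $N\geq N_{\epsilon}$, running the $N$ iterations with indices $k=0,1,\ldots,N-1$ of Algorithm~\ref{Alg:ASSPM} guarantees that $\min\{\|d^{k}\|:0\leq k\leq N-1\}\leq\epsilon$; in particular a direction of norm at most $\epsilon$ is computed within those iterations, so it suffices to bound the total number of evaluations of $\phi$ incurred over them.

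Next I would invoke Lemma~\ref{eq:nfeas} with $k=N-1$, which gives $J_{N-1}\leq 2N+(\log\lambda_{\min}-\log\lambda_{-1})/\log\zeta$ for the cumulative count $J_{N-1}$ of evaluations of $\phi$ through iteration $N-1$. Using $N=\lceil N_{\epsilon}\rceil\leq N_{\epsilon}+1$ to replace $2N$ by $2(N_{\epsilon}+1)$ and substituting the value of $N_{\epsilon}$ then produces exactly the claimed bound. The only point demanding care is the off-by-one alignment between the two cited results: Theorem~\ref{th:comp_A2} counts iterations $0$ through $N-1$, whereas $J_{k}$ in Lemma~\ref{eq:nfeas} is indexed so that it tallies iterations $0$ through $k$, and the additive ``$+1$'' inside the parentheses of the final bound is precisely the ceiling slack $\lceil N_{\epsilon}\rceil-N_{\epsilon}\leq 1$ being absorbed. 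I do not expect any genuine obstacle beyond this clerical matching.
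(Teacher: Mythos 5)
Your proposal is correct and follows exactly the paper's route: the paper's proof is the one-line "combine Lemma~\ref{eq:nfeas} with Theorem~\ref{th:comp_A2}," and you have simply made explicit the index bookkeeping (taking $N=\lceil N_{\epsilon}\rceil$, applying Lemma~\ref{eq:nfeas} at $k=N-1$, and absorbing the ceiling slack into the additive $+1$) that the paper leaves implicit.
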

\begin{proof}
The proof follows upon combining Lemma~\ref{eq:nfeas} with Theorem~\ref{th:comp_A2}.
\end{proof}

\begin{theorem}\label{th:comp_A3_dif} 
Suppose that \ref{it:nubound} holds and $g$ satisfies \ref{it:gdiff}. Let $0<\varsigma <1$ and $k_{0}\in \mathbb{N} $ such that $\nu _{k}\leq \varsigma (\sigma +\rho\lambda_{\min}^{2})\|d^{k}\|^{2}$, for all $k\geq k_{0}$. Then, for each $N\in\mathbb{N}$ such that $N >k_0$, there holds
\begin{equation*}
\min \{\|d^{k}\| : k=0,1,\cdots,N-1\} \leq {\frac{ \sqrt{\phi(x^{0})-\phi^{*}+\sum _{k=0}^{k_{0}-1}\nu _{k}}}{ \sqrt{ (1-\varsigma)(\sigma +\rho\lambda_{\min}^{2}) }}}\frac{1}{ \sqrt{N}}.
\end{equation*}
Consequently, for a given $\epsilon>0$ and $k_{0}\in \mathbb{N}$ such that $\nu _{k}\leq \varsigma (\sigma +\rho\lambda_{\min}^{2})\|d^{k}\|^{2}$ for all $k\geq k_{0}$, if $N\geq \max\{k_0, ({\phi (x^{0})-\phi ^{*}+\sum _{k=0}^{k_{0}-1}\nu _{k}})/({(1-\varsigma )(\sigma +\rho\lambda_{\min}^{2}) \epsilon ^{2}})\}$, then the following inequality holds $\min \left\{\| d^{k}\|: ~k=0,1,\cdots,N-1\right\}\leq \epsilon.$
\end{theorem}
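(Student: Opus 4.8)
The plan is to mirror the structure of Theorem~\ref{th:comp_A2} but replace the role of Corollary~\ref{coro:lmin} with the strategy~\ref{it:nubound} hypothesis, exactly as Theorem~\ref{th:comp_A3nd} adapts the summable-case argument to the bounded case. First I would invoke Corollary~\ref{coro:lmin}, which uses \ref{it:gdiff} to give the key descent inequality $(\sigma + \rho \lambda_{\min}^{2})\|d^{k}\|^{2} \leq \phi(x^{k}) - \phi(x^{k+1}) + \nu_{k}$ for all $k \in \mathbb{N}$. Summing this from $k=0$ to $k=N-1$ and telescoping the $\phi(x^{k}) - \phi(x^{k+1})$ terms, I would use \ref{it:phistarinf} (namely $\phi(x^{N}) \geq \phi^{*}$) to bound
\begin{equation*}
(\sigma + \rho \lambda_{\min}^{2})\sum_{k=0}^{N-1}\|d^{k}\|^{2} \leq \phi(x^{0}) - \phi^{*} + \sum_{k=0}^{k_{0}-1}\nu_{k} + \sum_{k=k_{0}}^{N-1}\nu_{k}.
\end{equation*}

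Next I would split the sum $\sum_{k=0}^{N-1}\nu_{k}$ at the index $k_{0}$, leaving the first $k_{0}$ terms as the constant $\sum_{k=0}^{k_{0}-1}\nu_{k}$ and bounding the tail using the hypothesis $\nu_{k} \leq \varsigma(\sigma + \rho\lambda_{\min}^{2})\|d^{k}\|^{2}$ for $k \geq k_{0}$. This produces $\sum_{k=k_{0}}^{N-1}\nu_{k} \leq \varsigma(\sigma+\rho\lambda_{\min}^{2})\sum_{k=0}^{N-1}\|d^{k}\|^{2}$ (enlarging the index range from $k_{0}$ to $0$ only increases the right-hand side). Moving this term to the left gives
\begin{equation*}
(1-\varsigma)(\sigma + \rho\lambda_{\min}^{2})\sum_{k=0}^{N-1}\|d^{k}\|^{2} \leq \phi(x^{0}) - \phi^{*} + \sum_{k=0}^{k_{0}-1}\nu_{k}.
\end{equation*}
Since $\varsigma \in (0,1)$, the coefficient $(1-\varsigma)(\sigma + \rho\lambda_{\min}^{2})$ is strictly positive, so this inequality is nontrivial. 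Finally, bounding $N \min_{0 \leq k \leq N-1}\|d^{k}\|^{2} \leq \sum_{k=0}^{N-1}\|d^{k}\|^{2}$, dividing, and taking square roots yields the stated estimate; the consequence about the required $N$ follows by rearranging and taking the maximum with $k_{0}$ to ensure $N > k_{0}$ so the tail-splitting is valid.

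I do not anticipate a genuine obstacle here, as this is a direct quantitative refinement of the asymptotic argument in Corollary~\ref{coro:A3} and a close analogue of Theorem~\ref{th:comp_A3nd}. The only point requiring mild care is the bookkeeping in the tail sum: one must keep the first $k_{0}$ values of $\nu_{k}$ as an additive constant (they are \emph{not} controlled by $\|d^{k}\|^{2}$) while absorbing the remaining terms into the left-hand side, and one must verify that the $\lambda_{\min}^{2}$ term survives correctly through Corollary~\ref{coro:lmin}, which is precisely where differentiability of $g$ via \ref{it:gdiff} enters (through Lemma~\ref{le:lmin}). The improvement over the non-differentiable Theorem~\ref{th:comp_A3nd} is simply the sharper constant $\sigma + \rho\lambda_{\min}^{2}$ in place of $\sigma$, inherited from the positive step-size lower bound.
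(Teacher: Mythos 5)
Your proposal is correct and follows essentially the same route as the paper's proof: both invoke Corollary~\ref{coro:lmin} for the descent inequality $(\sigma+\rho\lambda_{\min}^{2})\|d^{k}\|^{2}\leq \phi(x^{k})-\phi(x^{k+1})+\nu_{k}$, telescope, split the sum of the $\nu_{k}$ at $k_{0}$, absorb the tail into the left-hand side using $\nu_{k}\leq\varsigma(\sigma+\rho\lambda_{\min}^{2})\|d^{k}\|^{2}$, and conclude via $N\min_{k}\|d^{k}\|^{2}\leq\sum_{k}\|d^{k}\|^{2}$. No gaps.
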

\begin{proof} Let $\varsigma \in (0,1)$ and $k_{0}\in \mathbb{N} $ such that $\nu _{k}\leq \varsigma (\sigma +\rho\lambda_{\min}^{2})\|d^{k}\|^{2}$, for all $k\geq k_{0}$. 
It follows from Corollary~\ref{coro:lmin} that 
\begin{align*}
(\sigma +\rho\lambda_{\min}^{2})\|d^{k}\|^{2}\leq \phi (x^{k})-\phi (x^{k+1})+\nu _{k}, \qquad k=0,1,\cdots, N.
\end{align*}
Summing up last inequality from $k=0$ to $k=N$ and using assumption \ref{it:phistarinf}, we have
\begin{equation*}
(\sigma +\rho\lambda_{\min}^{2})\sum _{k=0}^{N-1}\|d^{k}\|^{2} \leq \phi (x^{0})-\phi ^{*}+\sum _{k=0}^{k_{0}-1} \nu _{k}+\sum _{k=k_{0}}^{N-1} \nu _{k}.
\end{equation*}
Hence, due to $\nu _{k}\leq \varsigma (\sigma +\rho\lambda_{\min}^{2})\|d^{k}\|^{2}$, for all $k\geq k_{0}$, the last inequality becomes 
\begin{equation*}
(\sigma +\rho\lambda_{\min}^{2})\sum _{k=0}^{N-1}\|d^{k}\|^{2} \leq \phi (x^{0})-\phi ^{*}+\sum _{k=0}^{k_{0}-1} \nu _{k}+\varsigma (\sigma +\rho\lambda_{\min}^{2})\sum _{k=0}^{N-1} \|d^{k}\|^{2}, 
\end{equation*}
which is equivalent to $(1-\varsigma)(\sigma +\rho\lambda_{\min}^{2}) \sum _{k=0}^{N-1} \|d^{k}\|^{2} \leq \phi (x^{0})-\phi ^{*}+\sum _{k=0}^{k_{0}-1} \nu _{k}$. Thus $ N\min \{\| d^{k}\| ^{2}: ~k=0,1,\cdots,N-1\}\leq (\phi(x^{0})-\phi^{*}+\sum _{k=0}^{k_0-1}\nu _{k})/((1-\varsigma){(\sigma +\rho\lambda_{\min}^{2})})$, and the first inequality follows. The last inequality follows from the first one.
\end{proof}

\begin{remark} 
For each one of the sequences $(\nu_{k})_{k\in \mathbb{N}}$ that appears in Example~\ref{eq:omega}, we already know the value of $k_0$ satisfying Theorem~\ref{th:comp_A3_dif}, see Remark~\ref{rmk:ExemploA3}. 
\end{remark}

\begin{theorem} 
Suppose that $(\nu_k)_{k\in\mathbb{N}}$ is chosen according to strategy \ref{it:nubound} and $g$ satisfies \ref{it:gdiff}. Let $0<\varsigma <1$ and $k_{0}\in \mathbb{N} $ such that $\nu _{k}\leq \varsigma (\sigma +\rho\lambda_{\min}^{2})\|d^{k}\|^{2}$, for all $k\geq k_{0}$. Then, the number of function evaluations in Algorithm \ref{Alg:ASSPM} to compute $d^k$ such that $\| d^{k}\|\leq \epsilon$ is at most 
 \begin{equation*}
2\Big(\frac{\phi (x^{0})-\phi ^{*}+\sum _{k=0}^{k_{0}-1}\nu _{k}}{(1-\varsigma)(\sigma +\rho\lambda_{\min}^{2}) \epsilon ^{2}}+1\Big) + \frac{\log \lambda_{\min} - \log {\lambda _{-1}}}{\log \zeta}.
 \end{equation*}
\end{theorem}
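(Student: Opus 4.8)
The plan is to follow verbatim the template of the analogous summable-case result stated just after Theorem~\ref{th:comp_A2}, whose one-line proof combines Lemma~\ref{eq:nfeas} with Theorem~\ref{th:comp_A2}. Here I would instead combine the per-iteration function-evaluation count of Lemma~\ref{eq:nfeas} with the iteration-complexity estimate of Theorem~\ref{th:comp_A3_dif}, since the latter is the $g$-differentiable counterpart tailored to strategy \ref{it:nubound}. Both ingredients are already established under exactly the present hypotheses (\ref{it:nubound} together with \ref{it:gdiff}), so no new estimate on the objective decrease is needed; the argument is a purely arithmetic combination of the two, and in particular it relies on the lower bound $\lambda_k\geq\lambda_{\min}$ (Lemma~\ref{le:lmin}) that underlies both results.

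First I would invoke Theorem~\ref{th:comp_A3_dif} to translate the target accuracy $\|d^k\|\leq\epsilon$ into a number of iterations. That theorem guarantees that whenever
\[
N\geq \max\Big\{k_0,\ \frac{\phi(x^0)-\phi^*+\sum_{k=0}^{k_0-1}\nu_k}{(1-\varsigma)(\sigma+\rho\lambda_{\min}^2)\epsilon^2}\Big\},
\]
one has $\min\{\|d^k\|:k=0,1,\dots,N-1\}\leq\epsilon$; hence it suffices to carry out this many iterations to produce an index $k$ with $\|d^k\|\leq\epsilon$. I would then take $N$ to be the ceiling of the right-hand side. Next, to count the evaluations of $\phi$ spent in the line search \eqref{eq:jku} over iterations $0,\dots,N-1$, I would apply Lemma~\ref{eq:nfeas}, which gives
\[
J_{N-1}\leq 2N+\frac{\log\lambda_{\min}-\log\lambda_{-1}}{\log\zeta}.
\]
Substituting the chosen $N$ and using $\lceil t\rceil\leq t+1$ to absorb the ceiling into the additive constant then reproduces the displayed bound.

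The main obstacle, and really the only delicate point, is the last reconciliation: the iteration count from Theorem~\ref{th:comp_A3_dif} is a maximum of $k_0$ and the $\epsilon^{-2}$ complexity term, whereas the stated evaluation bound retains only the complexity term. I would therefore restrict attention to the practically relevant small-$\epsilon$ regime, in which $\frac{\phi(x^0)-\phi^*+\sum_{k=0}^{k_0-1}\nu_k}{(1-\varsigma)(\sigma+\rho\lambda_{\min}^2)\epsilon^2}\geq k_0$ so that the maximum equals the complexity term; then $N\leq \frac{\phi(x^0)-\phi^*+\sum_{k=0}^{k_0-1}\nu_k}{(1-\varsigma)(\sigma+\rho\lambda_{\min}^2)\epsilon^2}+1$, and multiplying by the factor $2$ from Lemma~\ref{eq:nfeas} yields exactly the first term of the claimed expression, with the logarithmic term carried over unchanged.

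Everything else is already in place: the positivity $1-\varsigma>0$ comes from $\varsigma\in(0,1)$, the existence of a $k_0$ with $\nu_k\leq\varsigma(\sigma+\rho\lambda_{\min}^2)\|d^k\|^2$ is precisely the hypothesis (and is furnished in general by strategy \ref{it:nubound} applied with $\delta=\varsigma(\sigma+\rho\lambda_{\min}^2)$), and the per-step decrease $(\sigma+\rho\lambda_{\min}^2)\|d^k\|^2\leq\phi(x^k)-\phi(x^{k+1})+\nu_k$ used inside Theorem~\ref{th:comp_A3_dif} is Corollary~\ref{coro:lmin}. Thus I expect the proof to amount to a short paragraph that cites Lemma~\ref{eq:nfeas} and Theorem~\ref{th:comp_A3_dif} and performs the ceiling bookkeeping above, exactly mirroring the summable-case proof.
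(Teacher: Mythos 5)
Your proposal is correct and follows exactly the paper's route: the paper's entire proof is the one-line combination of Lemma~\ref{eq:nfeas} with Theorem~\ref{th:comp_A3_dif}, which is precisely what you do. Your extra care about the $\max\{k_0,\cdot\}$ term and the ceiling bookkeeping is more explicit than the paper itself, but it does not change the argument.
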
 
\begin{proof}
The proof follows combining Lemma~\ref{eq:nfeas} with Theorem~\ref{th:comp_A3_dif}.
\end{proof}
\subsection{Full convergence under the Kurdyka-\L{}ojasiewicz property}

The aim of this section is to present the full convergence for the sequence $(x^{k}) _{k\in \mathbb{N}}$ generated by Algorithm~\ref{Alg:ASSPM} under the assumption that $\phi$ satisfies the Kurdyka-\L{}ojasiewicz property (in short K\L{} property) at a cluster point $x^{*}$ of $(x^{k})_{k\in \mathbb{N}}$. Before, let us recall the definition of K\L{} property; see for instance \cite{AttouchSoubeyran2010, AttouchSvaiter2013} and \cite{ARAGON2019,CruzNetoEtAl2018} for this concept in the DC context.

\begin{definition}
Let $C^{1}[(0,+\infty)]$ be the set of all continuously differentiable functions defined in $(0,+\infty)$, $f\colon\mathbb{R}^{n}\to \mathbb{R}$ be a locally Lipschitz function and $ \partial _{c} f(\cdot)$ be the Clarke's subdifferential of $f$. The function $f$ is said to have the Kurdyka-\L{}ojasiewicz property at $x^{*}$ if there exist $\eta\in (0,+\infty]$, a neighborhood $U$ of $x^{*}$ and a continuous concave function $\gamma : [0,\eta)\to \mathbb{R}_{+}$ (called desingularizing function) such that $\gamma (0)=0,\; \gamma \in C^{1}[(0,+\infty)]$ and $\gamma'(t)>0$ for all $t\in (0,\eta)$. In addition, the function $f$ satisfies $\gamma '(f (x)-f (x^{*})) \mbox{dist}(0, \partial _{c} f(x))\geq 1$, for all $x\in U\cap \{x\in\mathbb{R}^{n}\;|\; f (x^{*})<f (x)<f (x^{*})+\eta \}$, where $\mbox{dist}(0, \partial _{c} f(x)):=\inf\{\|s\|: ~s\in \partial _{c} f(x)\}.$
\end{definition}

The technique in the proof of next theorem is similar to the one used in seminal works \cite{AttouchSoubeyran2010, AttouchSvaiter2013}. Since we used strategy \ref{it:nubound}, we decide to include the proof here for sake of completeness.

\begin{theorem}\label{th:conv_kl}
Suppose that $(\nu_k)_{k\in\mathbb{N}}$ is chosen according to strategy \ref{it:nubound}. Assume that $(x^{k}) _{k\in \mathbb{N}}$ has a cluster point $x^{*}$, $\nabla g$ is locally Lipschitz continuous around $x^{*}$, and that $\phi$ satisfies the K-\L{} property at $x^{*}$. Then $(x^{k}) _{k\in \mathbb{N}}$ converges to $x^{*}$, which is a critical point of $\phi$.
\end{theorem}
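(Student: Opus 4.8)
The plan is to run the classical \L{}ojasiewicz-type descent argument (as in \cite{AttouchSoubeyran2010, AttouchSvaiter2013}), which rests on two ingredients, both expressed in terms of $\|d^{k}\|$: a genuine sufficient-decrease inequality and a subgradient (relative-error) bound. Once these are in hand, concavity of the desingularizing function $\gamma$ converts the quadratic decrease into a summability estimate for $\|d^{k}\|$, which shows that $(x^{k})_{k\in\mathbb{N}}$ has finite length and is therefore Cauchy; since $x^{*}$ is a cluster point, the whole sequence must then converge to $x^{*}$, and criticality of $x^{*}$ follows from Theorem~\ref{pr:assym}.

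First I would extract a true descent inequality. By strategy~\ref{it:nubound}, for $\delta:=(\sigma+\rho\lambda_{\min}^{2})/2$ there is $k_{0}$ with $\nu_{k}\leq\delta\|d^{k}\|^{2}$ for $k\geq k_{0}$; combining this with Corollary~\ref{coro:lmin} gives $\phi(x^{k+1})\leq\phi(x^{k})-\alpha\|d^{k}\|^{2}$ for all $k\geq k_{0}$, where $\alpha:=(\sigma+\rho\lambda_{\min}^{2})/2>0$. Hence $(\phi(x^{k}))_{k\geq k_{0}}$ is non-increasing and, being bounded below by $\phi^{*}$ (assumption~\ref{it:phistarinf}), convergent; evaluating along the subsequence converging to $x^{*}$ and using continuity of $\phi$ identifies the limit as $\phi(x^{*})$. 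The same inequality forces $\sum\|d^{k}\|^{2}<\infty$, so $\|d^{k}\|\to 0$. If $\phi(x^{k})=\phi(x^{*})$ for some $k\geq k_{0}$, then $d^{k}=0$ and the algorithm terminates finitely; otherwise $\phi(x^{k})>\phi(x^{*})$ for all $k$, which is precisely what the K\L{} inequality requires.

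Next I would build the subgradient bound. Since $g$ is differentiable, \eqref{eq:charyk} reads $\nabla g(y^{k})=w^{k}\in\partial h(x^{k})$, so by Theorem~\ref{subdif_DC} the vector $v^{k}:=\nabla g(x^{k})-w^{k}=\nabla g(x^{k})-\nabla g(y^{k})$ belongs to $\partial_{c}\phi(x^{k})$. Using the local Lipschitz continuity of $\nabla g$ around $x^{*}$ (with constant $L$) together with $y^{k}-x^{k}=d^{k}$ yields
\begin{equation*}
\operatorname{dist}(0,\partial_{c}\phi(x^{k}))\leq\|v^{k}\|=\|\nabla g(x^{k})-\nabla g(y^{k})\|\leq L\|d^{k}\|,
\end{equation*}
valid once $x^{k}$ and $y^{k}=x^{k}+d^{k}$ lie in the Lipschitz neighborhood, which holds eventually since both approach $x^{*}$.

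Finally I would combine the pieces. Concavity of $\gamma$ gives the telescoping increment $\Delta_{k}:=\gamma(\phi(x^{k})-\phi(x^{*}))-\gamma(\phi(x^{k+1})-\phi(x^{*}))\geq\gamma'(\phi(x^{k})-\phi(x^{*}))(\phi(x^{k})-\phi(x^{k+1}))$; bounding $\gamma'(\phi(x^{k})-\phi(x^{*}))\geq 1/\operatorname{dist}(0,\partial_{c}\phi(x^{k}))\geq 1/(L\|d^{k}\|)$ via the K\L{} property and $\phi(x^{k})-\phi(x^{k+1})\geq\alpha\|d^{k}\|^{2}$ via the descent step produces $\|d^{k}\|\leq(L/\alpha)\Delta_{k}$. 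Summing this and using $\|x^{k+1}-x^{k}\|=(1+\lambda_{k})\|d^{k}\|\leq(1+\lambda_{-1})\|d^{k}\|$ (the step sizes are bounded above by $\lambda_{-1}$ and below by $\lambda_{\min}>0$ from Lemma~\ref{le:lmin}, the latter guaranteeing $\alpha>0$) shows $\sum\|x^{k+1}-x^{k}\|<\infty$, hence convergence to the cluster point $x^{*}$. The hard part is the bookkeeping needed to guarantee that the iterates remain in the neighborhood $U$ where both the K\L{} inequality and the local Lipschitz estimate are valid: this requires choosing an index $k^{*}\geq k_{0}$ from the convergent subsequence for which $x^{k^{*}}$ is close to $x^{*}$, $\phi(x^{k^{*}})-\phi(x^{*})$ lies in the K\L{} range, and $(1+\lambda_{-1})(L/\alpha)\gamma(\phi(x^{k^{*}})-\phi(x^{*}))$ plus $\|x^{k^{*}}-x^{*}\|$ fits inside a ball $B(x^{*},r)\subset U$, and then running an induction in which the finite-length bound itself certifies that no later iterate can escape $B(x^{*},r)$. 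This self-referential trapping argument is the main obstacle and must be set up with care.
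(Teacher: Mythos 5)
Your proposal is correct and follows essentially the same route as the paper's proof: the same sufficient-decrease inequality in terms of $\|d^{k}\|^{2}$ obtained from strategy \ref{it:nubound}, the same subgradient bound $\operatorname{dist}(0,\partial_{c}\phi(x^{k}))\leq L\|d^{k}\|$ via $\nabla g(x^{k})-\nabla g(y^{k})\in\partial_{c}\phi(x^{k})$, the same concavity/telescoping step, and the same induction-based trapping argument in a ball around $x^{*}$. The only cosmetic difference is your decrease constant $(\sigma+\rho\lambda_{\min}^{2})/2$ via Corollary~\ref{coro:lmin} versus the paper's $\sigma/2$ taken directly from Proposition~\ref{prop:gdif}\,\ref{it:gdif:ineq}, which changes nothing essential.
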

\begin{proof} Since $(\nu _{k}) _{k\in \mathbb{N}}$ satisfies \ref{it:nubound}, there exists $k_{0}\in \mathbb{N}$ such that $ \nu _{k}\leq (\sigma / 2) \|d^{k}\|^{2}$, for all $k\geq k_{0}$. Hence, we have 
\begin{equation*}
0< (\sigma / 2) \|d^{k}\|^{2} = \sigma \|d^{k}\|^{2} - (\sigma / 2) \|d^{k}\|^{2} \leq \sigma \|d^{k}\|^{2} -\nu_{k}, \qquad \forall k\geq k_{0}.
\end{equation*}
Combining the last inequality with Proposition~\ref{prop:gdif}\,\ref{it:gdif:ineq} we obtain
\begin{equation}\label{eq:monoto}
0 < (\sigma / 2) \|d^{k}\|^{2} \leq (\sigma +\rho \lambda _{k}^{2}) \|d^{k}\|^{2}-\nu _{k} \leq \phi (x^{k})-\phi (x^{k+1}), \qquad \forall k \geq k_{0}.
\end{equation}
Since $x^{*}$ is a cluster point of $(x^{k}) _{k\in \mathbb{N}},$ there exists a subsequence $(x^{k_{\ell}}) _{\ell \in \mathbb{N}}$ of $(x^{k}) _{k\in \mathbb{N}}$ such that $\lim _{\ell \to\ \infty}x^{k_{\ell}}=x^{*}$, which combined with \eqref{eq:monoto} implies that $\lim _{k\to \infty }\phi (x^{k})=\phi (x^{*})$.
If there exists an integer $k\geq k_{0}$ such that $\phi (x^{k})=\phi (x^{*}),$ then \eqref{eq:monoto} implies that $d^{k}=0.$ In this case, Algorithm~\ref{Alg:ASSPM} stops after a finite number of steps and the proof is concluded. Now, suppose that $\phi (x^{k})>\phi (x^{*})$ for all $k\geq k_{0}.$ Since $\nabla g$ is locally Lipschitz around $x^{*}$, there exist ${\hat \delta}>0$ and $L>0$ such that
\begin{equation}\label{gLip}
\|\nabla g(x)-\nabla g(y) \| \leq L \|x-y \|, \qquad \forall x,y \in B(x^{*},{\hat \delta}).
\end{equation}
Since $\phi$ satisfies the Kurdyka-\L{}ojasiewicz inequality at $x^{*}$, there exist $\eta \in (0, +\infty]$, a neighborhood $U$ of $x^{*}$ , and a continuous and concave function $\gamma: [0,\eta) \to \mathbb{R} _{+} $ such that for every $x \in U$ with $\phi (x^{*}) < \phi(x) < \phi(x^{*}) + \eta$, we have
\begin{equation}\label{eq:kl}
\gamma '(\phi (x)-\phi (x^{*}))\mbox{dist}(0,\partial _{c}\phi (x))\geq 1.
\end{equation}
Take ${\tilde \delta}>0$ such that $B(x^{*},{\tilde \delta})\subset U$ and set $\delta :=\frac{1}{2}\min \{{\hat \delta},{\tilde \delta}\}>0$. Considering that $\lim _{k\to \infty }\phi (x^{k})=\phi (x^{*})$, it follows from \eqref{eq:monoto} that $\lim _{k\to \infty}d^{k}=0$. Then, there exists $k_{1}\in \mathbb{N}$ such that
$\|y^{k}-x^{k}\|=\|d^k\|\leq \delta $ for all $k\geq k_{1}.$ Thus, for all $k\geq k_{1}$ such that $x^{k}\in B(x^{*},{\delta})$ we obtain that $
\|y^{k}-x^{*}\|\leq \|y^{k}-x^{k}\|+\|x^{k}-x^{*}\|\leq 2\delta \leq {\hat \delta}$. Hence, for all $k\geq k_{1}$ such that $x^{k}\in B(x^{*},{\delta})$ we obtain $x^k, y^{k}\in B(x^{*},{\hat \delta})$, and using \eqref{gLip} we conclude that $\|\nabla g(x^k)-\nabla g(y^k) \| \leq L \|x^k-y^k \|$. Hence, using that $\nabla g(x^{k})-w^{k}\in \partial _{c}\phi (x^{k})$, $w^{k}=\nabla g(y^{k})$ and $x^{k+1}-x^k=(1+\lambda_k) (y^k-x^k)$ we have 
\begin{equation} \label{eq:fckl}
\mbox{dist}(0,\partial _{c}\phi (x^{k}))\leq \|\nabla g(x^{k})-w^{k}\|= \|\nabla g(x^{k})-\nabla g(y^{k})\|\leq \frac{L}{1+\lambda _{k}} \|x^{k+1}-x^{k}\|, 
\end{equation}
 for all $k\geq k_{1}$ such that $x^{k}\in B(x^{*},{\delta})$. To simplify the notations we set 
 \begin{equation} \label{eq:Kkl}
K:= \frac{2L(1+\lambda _{-1})}{\sigma}>0.
\end{equation}
Since $\lim _{\ell \to \infty}x^{k_{\ell}} = x^{*}$, $\lim _{k\to \infty} \phi(x^{k}) =
\phi (x^{*})$ and $\phi( x^{k} ) > \phi (x^{*})$, for all $k\geq k_{0}$, and $\phi$ is continuous, we can take an index $N\geq \max\{ k_{0},k_{1}\}$ such that
\begin{equation}\label{eq:xN}
x_{N}\in B(x^{*},\delta)\subset U, \qquad \phi (x^{*})<\phi (x^{N})<\phi (x^{*})+\eta.
\end{equation}
Furthermore, due to $\gamma (0)=0$, we can also assume that $N\geq \max\{ k_{0},k_{1}\}$ satisfies 
\begin{equation}\label{eq:xN2}
\|x^{N}-x^{*}\| + K \gamma (\phi (x^{N})-\phi (x^{*}))<\delta.
\end{equation}
On the other hand, for $k \geq N$ such that $x^{k} \in B(x^{*}, \delta)\subset U$, \eqref{eq:kl} and \eqref{eq:fckl} yield 
\begin{equation*}
 \gamma '(\phi (x^{k})-\phi (x^{*}))\geq \frac{1}{\mbox{dist}(0,\partial _{c}\phi (x^{k}))}\geq \frac{1+\lambda_{k} }{L\|x^{k}-x^{k+1}\|}.
\end{equation*}
Thus, due to $\gamma$ be concave, combining the last inequality with \eqref{eq:monoto} we have 
\begin{align*}
\gamma (\phi(x^{k})-\phi (x^{*})) -\gamma (\phi(x^{k+1})-\phi (x^{*})) &\geq \gamma '(\phi(x^{k})-\phi (x^{*}))(\phi (x^{k})-\phi (x^{k+1})) \\
& \geq \frac{ 1+\lambda _{k}}{L\|x^{k+1}-x^{k}\|} \frac{ \sigma \| d^{k}\|^{2}}{2}.
\end{align*}
Hence, using that $x^{k+1}-x^k=(1+\lambda_k) d^k$, $0<\lambda_k\leq \lambda_{-1}$ and \eqref{eq:Kkl}, we obtain
\begin{equation}\label{eq:sum1}
\|x^{k+1}-x^{k}\| \leq K\left( \gamma (\phi(x^{k})-\phi (x^{*})) -\gamma (\phi(x^{k+1})-\phi (x^{*})) \right),
\end{equation}
 for all $k \geq N$ such that $x^{k}\in B(x^{*},\delta)$. In the next step we will prove by induction that $x^{k}\in B(x^{*},\delta)$ for all $k\geq N$. For $k=N,$ the statement is valid due to the inclusion in \eqref{eq:xN}. Now, suppose that $x^{k}\in B(x^{*},\delta)$ for all $k=N+1, \cdots,N+p-1$ for some $p\geq 2$. Since $\phi( x^{k} ) > \phi (x^{*})$ for all $k\geq k_{0}$, from \eqref{eq:monoto}, \eqref{eq:xN} and $N\geq \max\{ k_{0},k_{1}\}$ we conclude that $\phi(x^{*})<\phi (x^{k+1})<\phi (x^{k})<\phi(x^{*})+\eta$, for all $k=N+1, \cdots,N+p-1$. We proceed to prove that $x^{N+p}\in B(x^{*},\delta)$. First, by using triangular inequality, induction hypothesis and \eqref{eq:sum1}, we have 
 \begin{align*}
\| x^{N+p}-x^{*}\| &\leq \| x^{N}-x^{*} \| + \sum _{i=1}^{p}\| x^{N+i}-x^{N+i-1}\|\\
 &\leq \| x^{N}-x^{*} \| + K \sum _{i=1}^{p}\left[ \gamma (\phi(x^{N+i-1})- \phi (x^{*})) -\gamma (\phi(x^{N+i})-\phi (x^{*})) \right].
 \end{align*}
Summing up last inequality and taking into account that $ \gamma(\phi(x^{N+p})-\phi (x^{*}))\geq 0$ and \eqref{eq:xN2} we obtain
\begin{align*}
\| x^{N+p}-x^{*}\| &= \| x^{N}-x^{*} \| + K \gamma(\phi(x^{N})-\phi (x^{*}))- K \gamma(\phi(x^{N+p})-\phi (x^{*}))\\
 &\leq \| x^{N}-x^{*} \| + K \gamma\big(\phi(x^{N})-\phi (x^{*})\big) <\delta,
 \end{align*}
which concludes the induction. Finally, considering that $x^{k}\in B(x^{*},\delta)$ for all $k\geq N$, similar argument used above together with \eqref{eq:xN2} and \eqref{eq:sum1} yields 
\begin{align*}
\sum _{k=N}^{N+p} \|x^{k+1}-x^{k}\| & \leq \sum _{k=N}^{N+p} K\left( \gamma (\phi(x^{k})-\phi (x^{*})) -\gamma (\phi(x^{k+1})-\phi (x^{*})) \right)\\
 & = K \gamma(\phi(x^{N})-\phi (x^{*}))- K \gamma(\phi(x^{N+p})-\phi (x^{*}))\\
 &\leq K \gamma(\phi(x^{N})-\phi (x^{*}) <\delta. 
\end{align*} 
Taking the limit in last inequality as $p$ goes to $\infty$ we have $\sum _{k=N}^{\infty }\| x^{k}-x^{k+1} \| <\infty,$ which implies that $ (x^k)_{k\in\mathbb{N}}$ is a Cauchy sequence. Hence, due to $x^{*}$ be a cluster point of $ (x^k)_{k\in\mathbb{N}}$, then the whole sequence $ (x^k)_{k\in\mathbb{N}}$ converges to $x^{*}$. Therefore, by using Corollary~\ref{coro:A3}, the proof is concluded. 
\end{proof}
\begin{remark} If $\nu_{k}\equiv 0$, then Algorithm~\ref{Alg:ASSPM} becomes the BDCA given in \cite{ARAGON2019}, and consequently Theorem~\ref{th:conv_kl} merges into \cite[Theorem 4.3]{ARAGON2019}. 
\end{remark}
\section{Numerical experiments} \label{Sec6}
In this section, we present some numerical experiments to verify the practical efficiency of the proposed non-monotone BDCA. The experiments were coded in MATLAB R2020b on a notebook 8 GB RAM Core i7. To evaluate its performance, we run it for some academic tests functions existing in the DC literature (see \cite{ARAGON2019,CruzNetoLopesSantosSouza2019,Joki2017}). 

The aim of this section is to show that the non-monotone BDCA has a good performance as its monotone version proposed by \cite{ARAGON2019} compared to the classical DC Algorithm (DCA \cite{Pham1986}). Additionally, we also compare its performance with the proximal point method for DC functions (PPMDC~\cite{Moudafi2006, SOUZA3016, SunSampaio2003}). All the methods require to solve a minimization problem (here it is called ``subproblem"). We solve the subproblems of all methods using \texttt{fminsearch}, a build-in MATLAB solver, with the \texttt{optionset(`\,TolX\,',1e-7,`\,TolFun\,',1e-7)}. The stopping rule of the outer loop in all methods is $||x^{k+1}-x^k||< 10^{-7}$. In each running, the methods take the same random initial point $x^0 \in [-10,10]^n$ in $\mathbb{R}^n$. In the PPMDC, we take the proximal parameter in the regularization term $\alpha_k=0.01$, for all $k\in\mathbb{N}$. In nmBDCA, for all problems, we take the same configuration of parameters $\rho=\zeta =0.5$ and $ \nu _{k}:=\omega\|d^{k}\|^{2}/(k+1)$ as suggested in Example~\ref{eq:omega} with $\omega=0.01$. The initial value $\lambda _{-1}$ are taken as follows: $\lambda _{-1}=3.9$, $\lambda _{-1}=16.0$, $\lambda _{-1}=1.5$, $\lambda _{-1}=5.4$, $\lambda _{-1}=2.8$, $\lambda _{-1}=30.0$ and $\lambda _{-1}=6.6$ for Problem~\ref{prob7}~--~\ref{prob5}, respectively. We make the MATLAB implementation of solvers nmBDCA, DCA and PPMDC as well as the list of initial points and the test problems freely available at the link \href{https://sites.google.com/ufpi.edu.br/souzajco/publications}{https://sites.google.com/ufpi.edu.br/souzajco/publications}.

To proceed the comparison, we perform all methods 100 times starting from the same random initial point. We show the results of nmBDCA, DCA and PPMDC in Tables~\ref{tabnmBDCA}, where $n$ denotes the number of variables of the problem, the columns \texttt{min. k} (resp. \texttt{min. time}), \texttt{max. k} (resp. \texttt{max. time}) and \texttt{med. k} (resp. \texttt{med. time}) present the minimum, maximum and median of iterations (resp. CPU time in seconds) until the stopping rule is satisfied, $\phi(x^k)$ denotes the best value of the objective function for all the solutions found and \texttt{\% opt. value} presents the rate in which the method approximately found the best solution known. The values of $|\phi(x^k)-\phi^*|$ and $||x^{k+1}-x^k||$ for one run of each method are presented in Figures~\ref{fig8}--\ref{fig6}. In these figures, we can clearly see that although nmBDCA sometimes does not decrease DCA (when the red line is above of blue line in the $||\phi(x^k)-\phi^*||$-axis) nmBDCA still outperforms DCA and PPMDC. We run all the methods for two different starting points which for one of them all the methods find the global solution and the other one which DCA and PPMDC stop at a critical point while nmBDCA keeps running until to find the global solution (except for Problem~\ref{prob1} where all the methods always find the global minimum). 

\begin{problem}\label{prob7}\cite[Problem 4.1]{CruzNetoLopesSantosSouza2019}
Let $\phi:\mathbb{R}^2 \to \mathbb{R}$ be a DC function with DC components 
$$
g(x)=\sin\left(\sqrt{|3x_1+|x_1-x_2| +2x_2|}\right)+5(x_1^2+x_2^2)$$
and 
$$h(x)=5(x_1^2+x_2^2).$$
The optimum value is $\phi^*=-1$.
\end{problem}

\begin{problem}\label{prob6}Example~\ref{ExampleAlgorithm} revisited (\cite[Example 3.4]{ARAGON2019})
Let $\phi:\mathbb{R}^2 \to \mathbb{R}$ be a DC function with DC components 
$$
g(x)=-\frac{5}{2}x_1+x_1^2+x_2^2+|x_1|+|x_2|$$
and 
$$h(x)=\frac{1}{2}(x_1^2+x_2^2).$$
The minimum point of $\phi$ is $x^*=(1.5, 0)^{\top}$ and the optimum value is $\phi^*=-1.125$.
\end{problem}

\begin{problem}\label{prob1}\cite[Problem 1]{Joki2017}
Let $\phi:\mathbb{R}^2 \to \mathbb{R}$ be a DC function with DC components 
$$g(x)=\max\{f_{1,1}(x),f_{1,2}(x),f_{1,3}(x)\}+f_{2,1}(x)+f_{2,2}(x)+f_{2,3}(x)$$
and 
$$h(x)=\max\{f_{2,1}(x)+f_{2,2}(x),f_{2,2}(x)+f_{2,3}(x),f_{2,1}(x)+f_{2,3}(x)\},$$
where $f_{1,1}(x)=x_1^4+x_2^2$, $f_{1,2}(x)=(2-x_1)^2+(2-x_2)^2$, $f_{1,3}(x)=2e^{-x_1+x_2}$, $f_{2,1}(x)=x_1^2-2x_1+x_2^2-4x_2+4$, $f_{2,2}(x)=2x_1^2-5x_1+x_2^2-2x_2+4$ and $f_{2,3}(x)=x_1^2+2x_2^2-4x_2+1$. The minimum point of $\phi$ is $x^*=(1,1)^{\top}$ and the optimum value is $\phi^*=2$.
\end{problem}

\begin{problem}\label{prob2}\cite[Problem 2]{Joki2017}
Let $\phi:\mathbb{R}^2 \to \mathbb{R}$ be a DC function with DC components 
$$g(x)=|x_1-1|+200\max\{0,|x_1|-x_2\}$$
and 
$$h(x)=100(|x_1|-x_2).$$
The minimum point of $\phi$ is $x^*=(1,1)^{\top}$ and the optimum value is $\phi^*=0$.
\end{problem}

\begin{problem}\label{prob3}\cite[Problem 3]{Joki2017}
Let $\phi:\mathbb{R}^4 \to \mathbb{R}$ be a DC function with DC components 
\begin{eqnarray*}
g(x)&=& |x_1-1|+200\max\{0,|x_1| - x_2\}+180\max\{0,|x_3| - x_4\}+|x_3-1|\\
&&+10.1(|x_2-1|+|x_4-1|)+4.95|x_2+x_4-2|
\end{eqnarray*}
and 
$$h(x)=100(|x_1| - x_2)+90(|x_3| - x_4)+4.95|x_2 - x_4|.$$
The minimum point of $\phi$ is $x^*=(1,1,1,1)^{\top}$ and the optimum value is $\phi^*=0$.
\end{problem}

\begin{problem}\label{prob4}\cite[Problem 7]{Joki2017}
Let $\phi:\mathbb{R}^2 \to \mathbb{R}$ be a DC function with DC components 
\begin{eqnarray*}
g(x)&=& |x_1-1|+200\max\{0,|x_1| - x_2\}\\
&&+10\max\{x_1^2+x_2^2+|x_2|,x_1+x_1^2+x_2^2+|x_2|-0.5,|x_1-x_2|+|x_2|-1,x_1+x_1^2+x_2^2\}
\end{eqnarray*}
and 
$$h(x)=100(|x_1| - x_2)+10(x_1^2+x_2^2+|x_2|).$$
The minimum point of $\phi$ is $x^*=(0.5,0.5)^{\top}$ and the optimum value is $\phi^*=0.5$.
\end{problem}

\begin{problem}\label{prob5}\cite[Problem 8]{Joki2017}
Let $\phi:\mathbb{R}^3 \to \mathbb{R}$ be a DC function with DC components 
\begin{eqnarray*}
g(x)&=& 9-8x_1-6x_2-4x_3+2|x_1|+2|x_2|+2|x_3|\\
&&+4x_1^2+2x_2^2+2x_3^2 + 10\max\{0,x_1+x_2+2x_3-3,-x_1,-x_2,-x_3\}
\end{eqnarray*}
and 
$$h(x)=|x_1 - x_2|+|x_1 - x_3|.$$
The minimum point of $\phi$ is $x^*=(0.75,1.25,0.25)^{\top}$ and the optimum value is $\phi^*=3.5$.
\end{problem}

As we can see in Table~\ref{tabnmBDCA}, nmBDCA outperforms DCA and PPMDC in the quality of the solution found in all the test problems. In three of the seven test problems, nmBDCA finds the global minimum in all run while the other methods do the same only in one test problem. In terms of performance (number of iterates and CPU time), nmBDCA also outperforms DCA and PPMDC. This is clear when we see the lines for Problem~\ref{prob7} and \ref{prob1} in Table~\ref{tabnmBDCA} where all the methods have the same rate of finding the global solution. In these cases, nmBDCA is 16 times and 3 times more efficient in terms of the median of iterates and CPU time than DCA and PPMDC for Problem~\ref{prob7} and \ref{prob1}, respectively. In Problem~\ref{prob6} and \ref{prob4}, nmBDCA has a better performance in terms of median of iteration and CPU time compare with DCA and PPMDC. Note that in these problems nmBDCA finds the global solution in the rate of $100\%$ and $56\%$, respectively, against $63\%$ and $30\%$ for DCA and PPMDC, respectively. In Problem~\ref{prob2}, \ref{prob3} and \ref{prob5}, nmBDCA needs more iterates and CPU time to obtain a solution than DCA and PPMDC. This is why in these problems, DCA and PPMDC stop in few iterates but they find the global solution only in the rate of $49\%$, $17\%$ and $18\%$ for DCA and $48\%$, $18\%$ and $18\%$ for PPMDC while nmBDCA finds the global solution in the rate of $100\%$, $31\%$ and $67\%$, respectively. It is worth to note that, in these problems, nmBDCA underperforms the other methods meanly because the sequence $\nu _{k}:=\omega\|d^{k}\|^{2}/(k+1)$ (with $\omega=0.01$) enables a large increasing of $\phi(x^{k+1})$ compared to $\phi(y^{k})$ in the first steps. Our simulations show that if we consider small values of $\omega$ in these problems, then the performance of nmBDCA is quite similar to DCA and PPMDC but with better rates of finding global minimum.

Summing up, our numerical experiments show that nmBDCA has a good performance compared with DCA and PPMDC such as its monotone version BDCA. The freedom to a possible growth given by the parameter $\nu_k$ does not affect the efficiency of the method. This is an important feature which increases the range of application of boosted DC algorithms.

\begin{sidewaystable}
\captionsetup{position=top} 
\caption{Summary of the numerical results of nmBDCA, DCA and PPMDC for 100 run.}
\label{tabnmBDCA} 
\begin{tabular}{|c|c|c|c|c|c|c|c|c|c|c|c|}
\hline
Problem & $n$ &  min. $k$ & max. $k$ & med. $k$ & min. time & max. time & med. time & $\phi(x^k)$ & \% opt. value \\ \hline
\multicolumn{10}{|c|}{nmBDCA} \\ \hline
\ref{prob7} & 2 & 3 & 83 & 46.28 & 0.0026649 & 0.0568138 & 0.030465146 & -0.999999999999859 & 97 \\ \hline%
\ref{prob6} & 2 & 7 & 16 & 10.82 & 0.005585 & 0.0136846 & 0.008780246 & -1.125000000000000 & 100 \\ \hline%
\ref{prob1} & 2  &6& 15 & 9.81 & 0.0082839 & 0.0211974 & 0.012511895 & 2.000000000000004 & 100 \\ \hline%
\ref{prob2} & 2& 3 & 6 & 4.02 & 0.0022413 & 0.0081915 & 0.004182691 & 3.960432204408448e-09 & 100 \\ \hline%
\ref{prob3} & 4& 4 & 13 & 7.28 & 0.0078985 & 0.0395438 & 0.021201167 & 4.348665016973285e-08 & 31 \\ \hline%
\ref{prob4} & 2 & 3 & 21 & 8.8 & 0.0022562 & 0.0213988 & 0.009153369 & 0.500000002033778 & 56 \\ \hline%
\ref{prob5} & 3 & 3 & 8 & 6.41 & 0.009874 & 0.0205232 & 0.013033188 & 3.499999999999999 & 67 \\ \hline%
\multicolumn{10}{|c|}{DCA} \\ \hline
\ref{prob7} &2&2&1072&749.5599999&0.0023714 & 0.8463685 & 0.499823531 & -0.999999999996628 & 97 \\ \hline%
\ref{prob6} & 2 & 2 & 27 & 17.19 & 0.0011749 & 0.0150196 & 0.008793059 & -1.125000000000000 & 63 \\ \hline%
\ref{prob1}&2&23&35 & 30.5599999 & 0.0268213 & 0.0439881 & 0.036820655 & 2.000000000000052 & 100 \\ \hline%
\ref{prob2} & 2 & 2 & 5 & 2.15 & 0.0012365 & 0.0040823 & 0.001967687 & 2.274440946692380e-08 & 49 \\ \hline%
\ref{prob3} & 4 & 3 & 12 & 6.59 & 0.0068355 & 0.043065 & 0.018513736 & 9.012336862346258e-08 & 17 \\ \hline%
\ref{prob4} &2&2& 359 & 58.4399999 & 0.0013686 & 0.3862458 & 0.062961467 & 0.500000013656637 & 30 \\ \hline%
\ref{prob5} & 3 & 2 & 6 & 2.54 & 0.0035983 & 0.0169846 & 0.006560558 & 3.500000000000000 & 18 \\ \hline%
\multicolumn{10}{|c|}{PPMDC} \\ \hline
\ref{prob7} &2&2&1067&751.4299999&0.0025088 & 0.8645138 & 0.510410394 & -0.999999999997184 & 97 \\ \hline%
\ref{prob6} & 2 & 2 & 27 & 17.51 & 0.0012693 & 0.0180685 & 0.009237261 & -1.125000000000000 & 63 \\ \hline%
\ref{prob1} & 2 & 23 & 35 & 30.53 & 0.027967 & 0.0532839 & 0.038431424 & 2.000002000000055 & 100 \\ \hline%
\ref{prob2} & 2 & 2 & 4 & 2.09 & 0.0012844 & 0.0036622 & 0.001996839 & 1.526064075108025e-08 & 48 \\ \hline%
\ref{prob3} & 4 & 3 & 14 & 6.5 & 0.0064865 & 0.0394513 & 0.019424618 & 6.207930470791823e-08 & 18 \\ \hline%
\ref{prob4} & 2 & 2 & 359 & 58.46 & 0.0014268 & 0.4039013 & 0.065365052 & 0.500000011319287 & 30 \\ \hline%
\ref{prob5} & 3 & 2 & 29 & 5.3 & 0.0067166 &0.0536524 & 0.0127099 & 3.500000000000002 & 18 \\ \hline%
\end{tabular}
\end{sidewaystable}

\begin{figure}[h!]
\centering
\subfloat{\label{fig8:a}\includegraphics[width=0.25\linewidth]{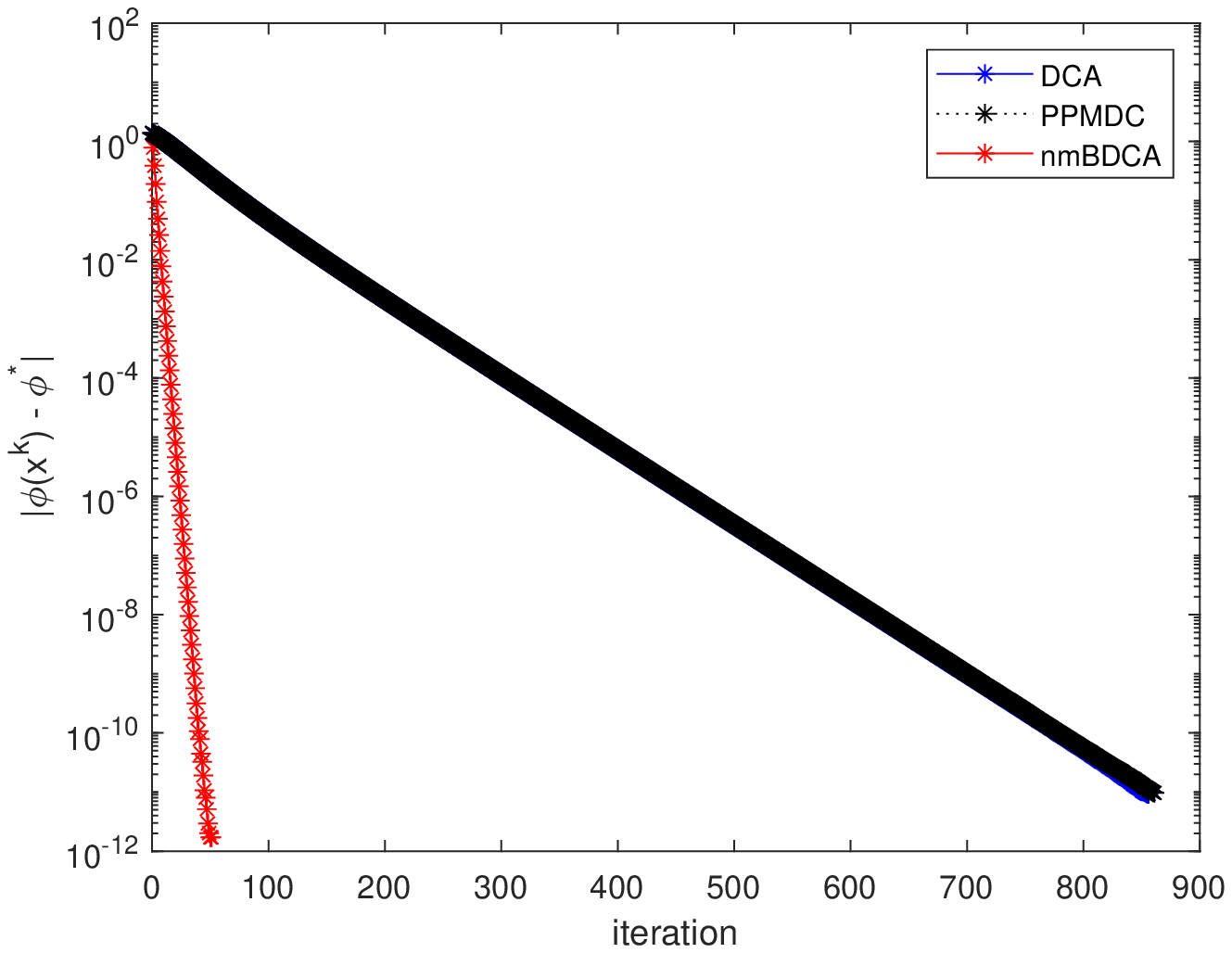}}
\subfloat{\label{fig8:b}\includegraphics[width=0.25\linewidth]{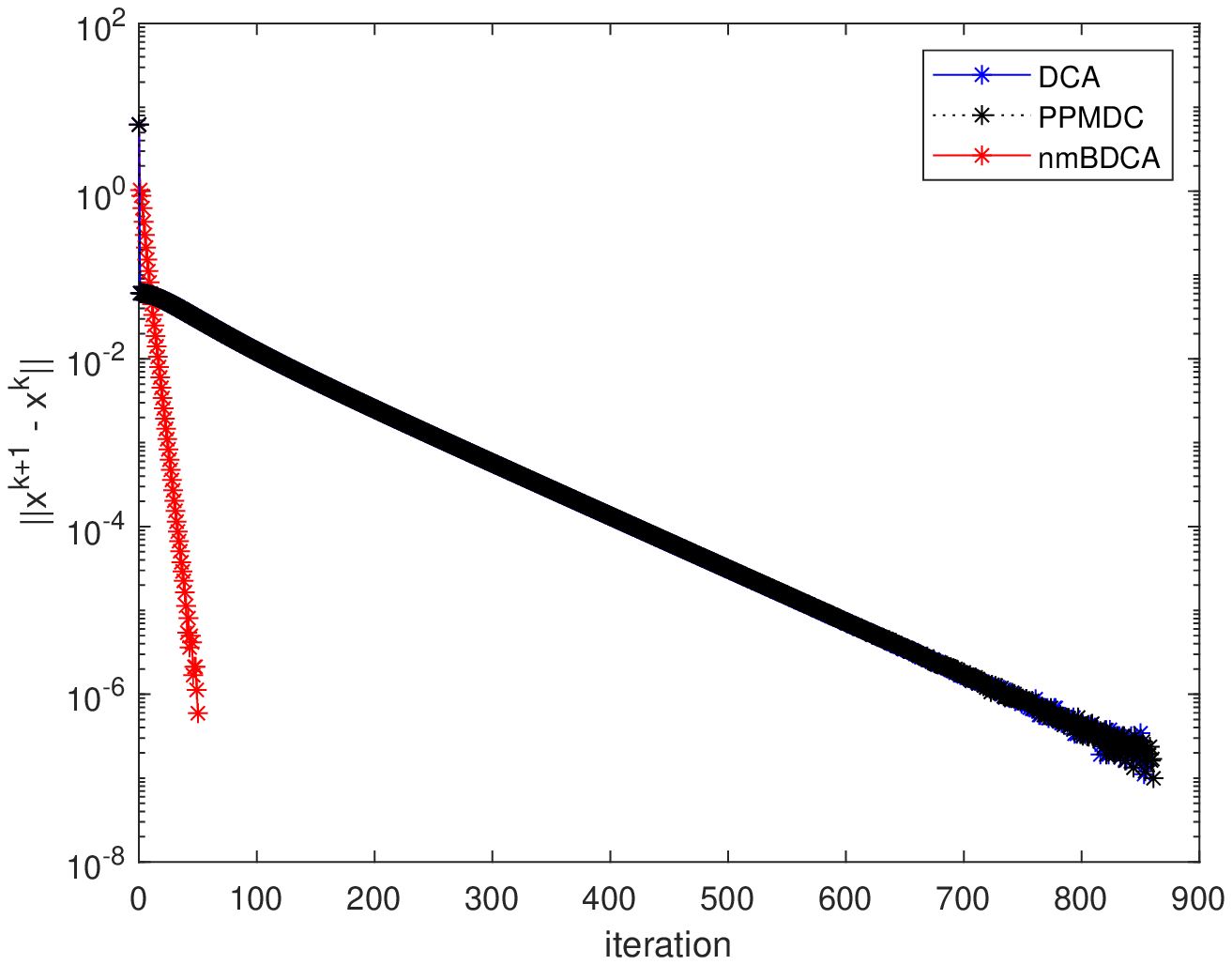}}
\subfloat{\label{fig8:c}\includegraphics[width=0.25\textwidth]{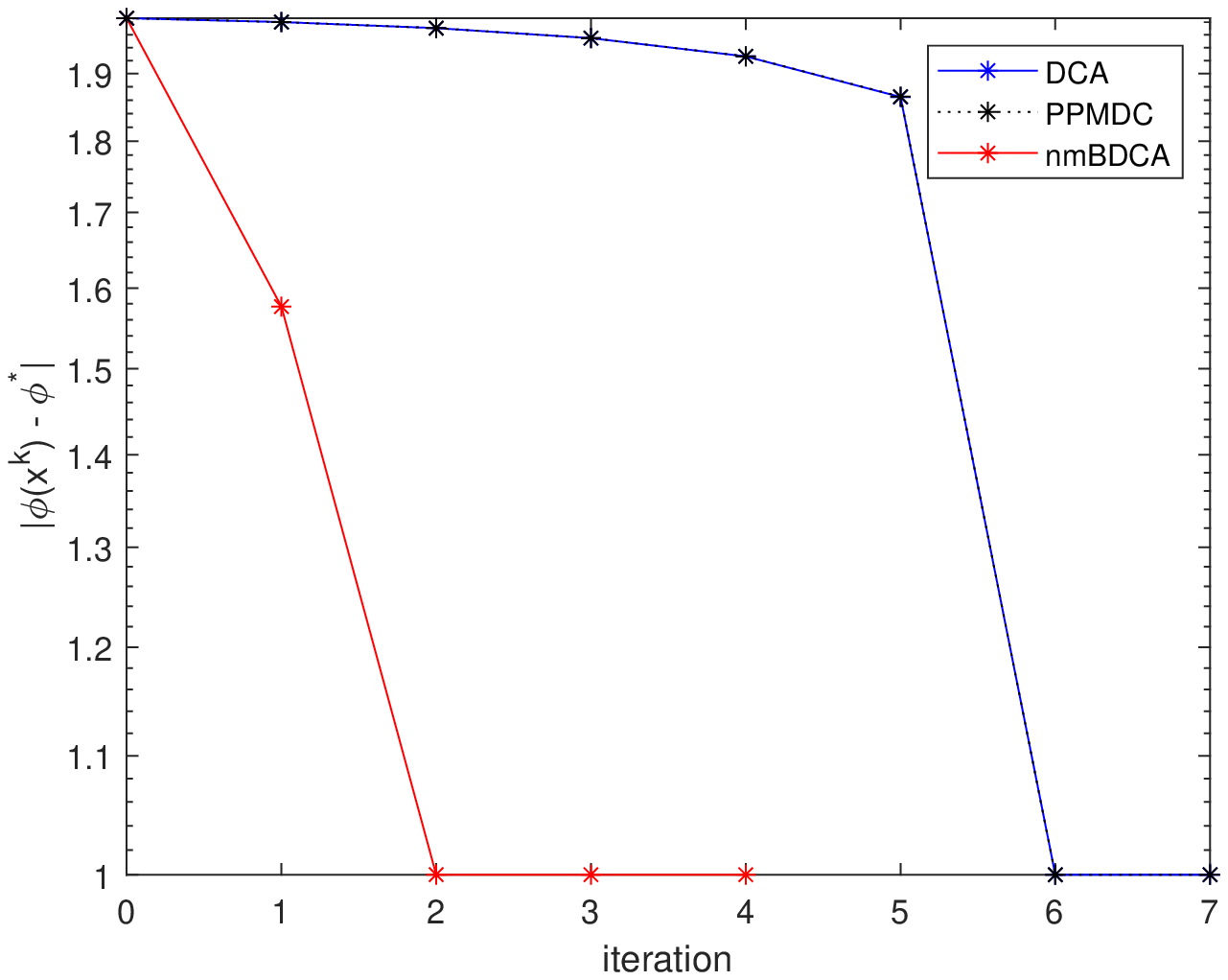}}
\subfloat{\label{fig8:d}\includegraphics[width=0.25\textwidth]{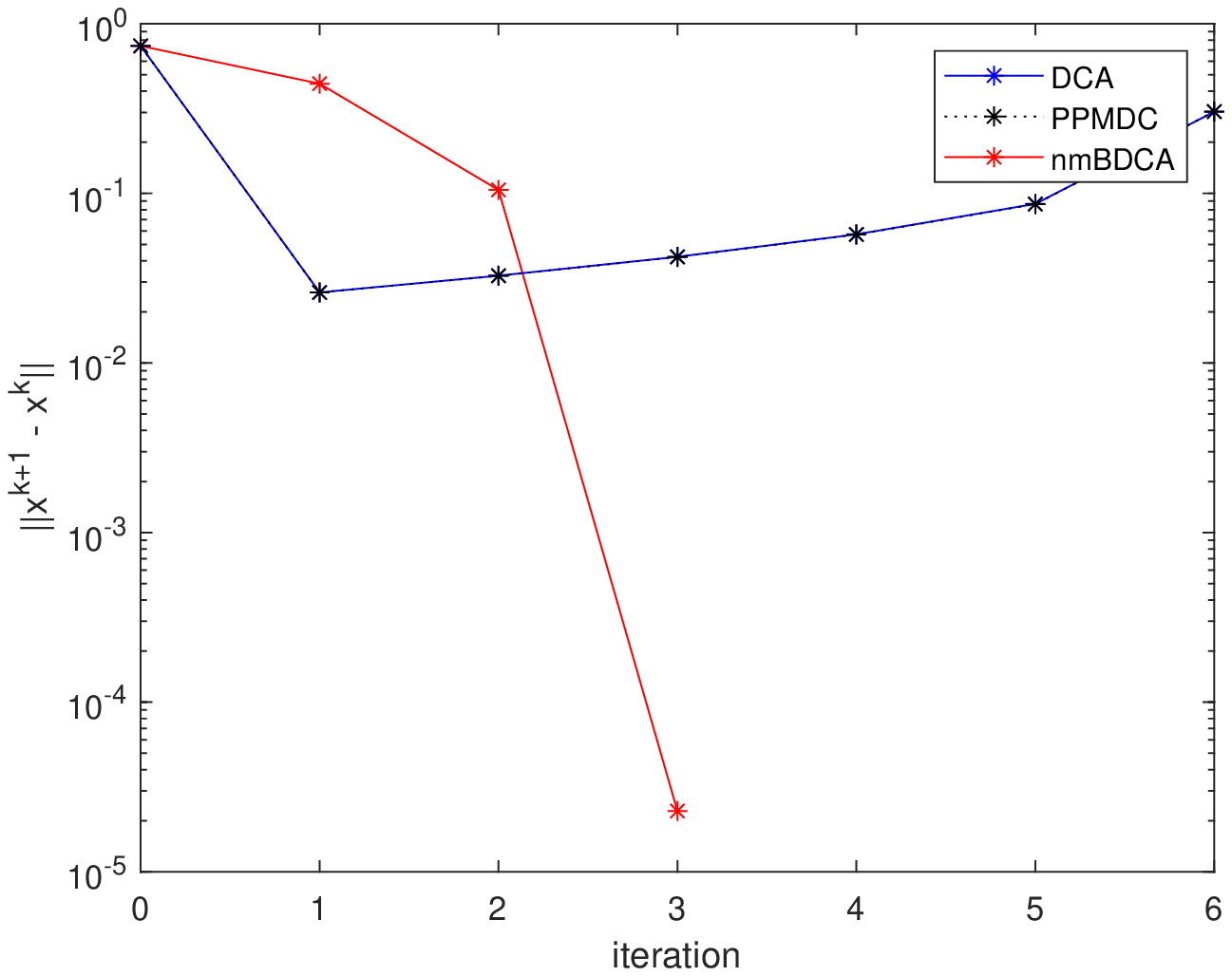}}%
\caption{Value of $||\phi(x^k)-\phi^*||$ and $||x^{k+1}-x^k||$ (using log. scale) for Problem~\ref{prob7}.}
\label{fig8}
\end{figure}

\begin{figure}[h!]
\centering
\subfloat{\label{fig7:a}\includegraphics[width=0.25\linewidth]{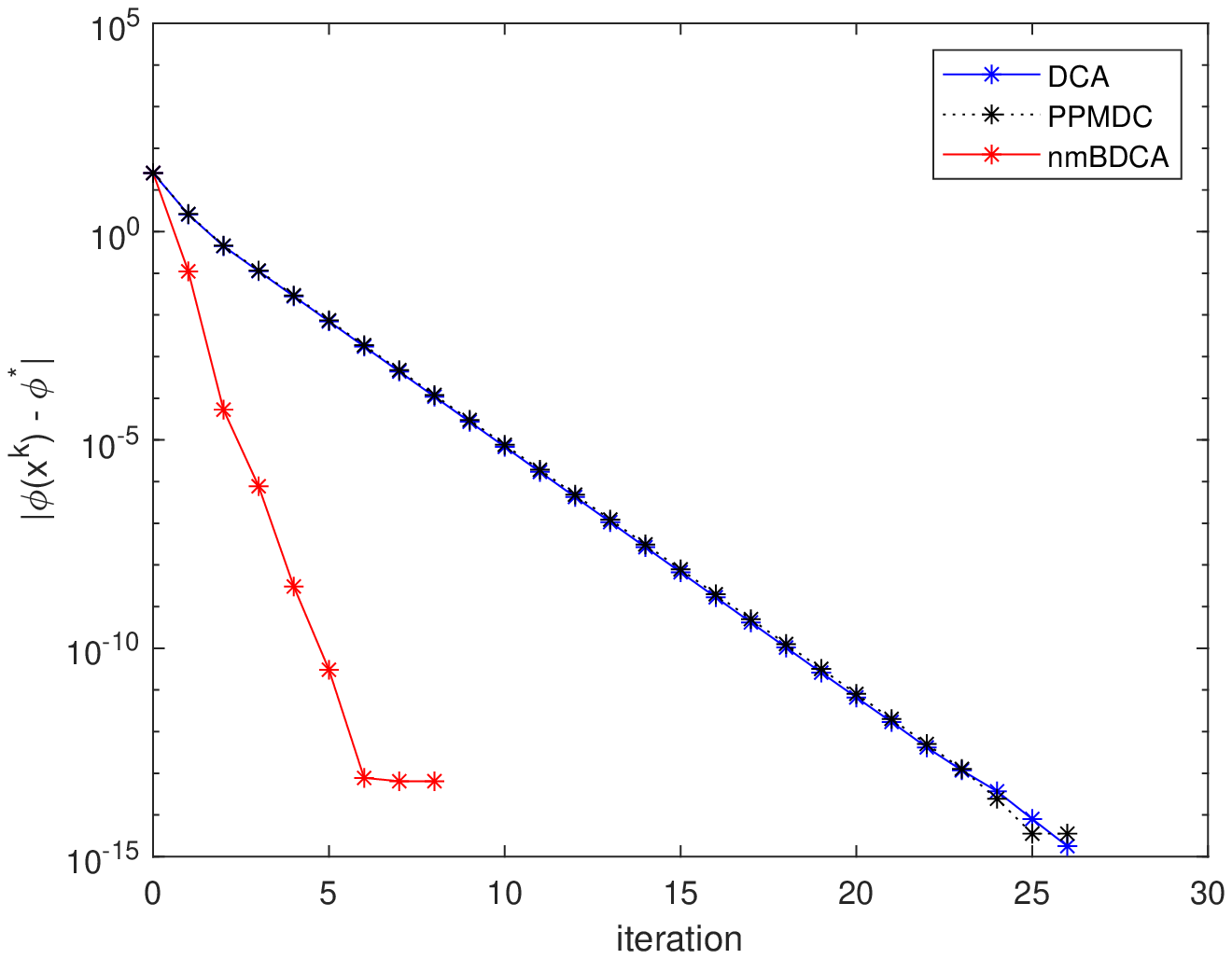}}
\subfloat{\label{fig7:b}\includegraphics[width=0.25\linewidth]{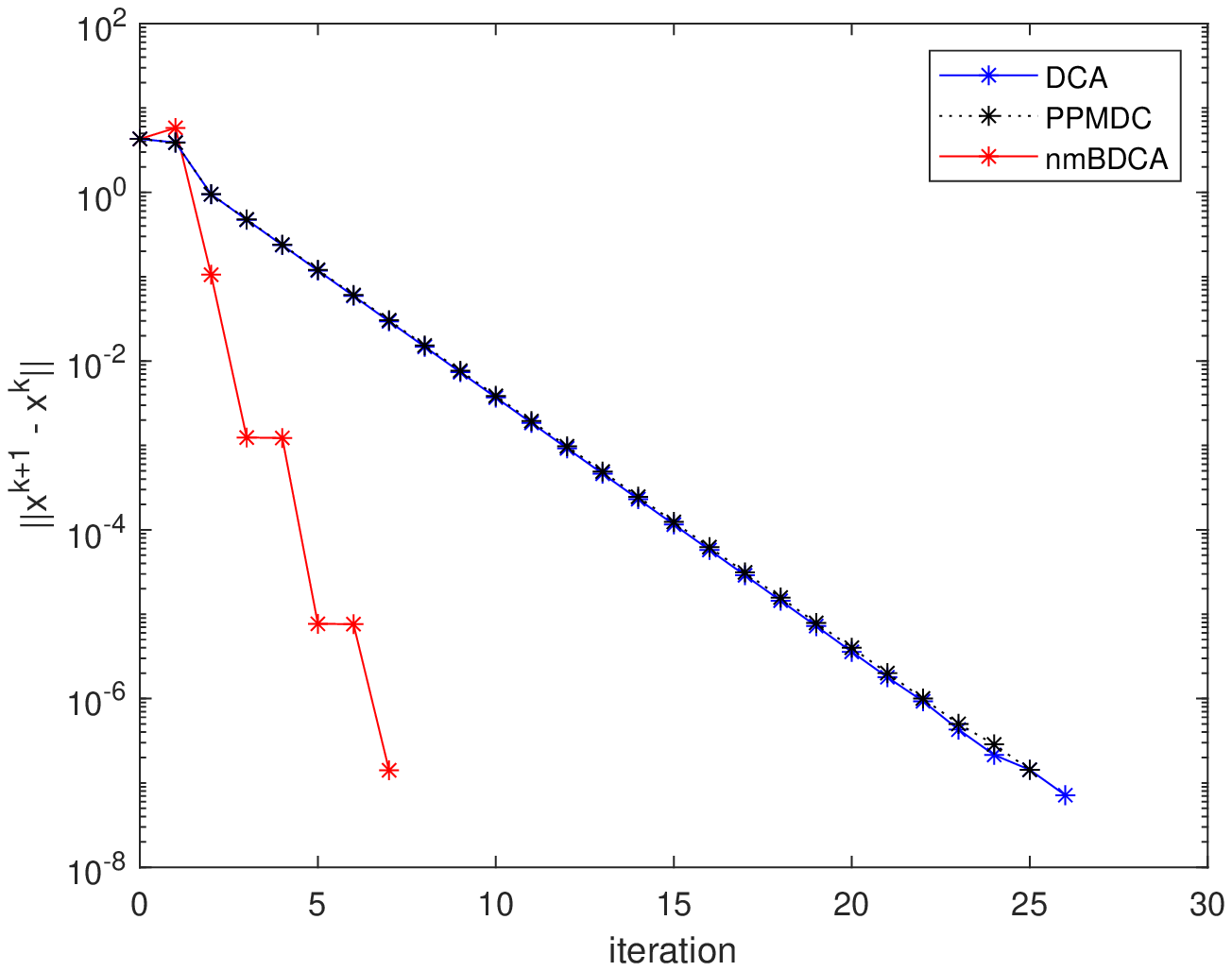}}
\subfloat{\label{fig7:c}\includegraphics[width=0.25\textwidth]{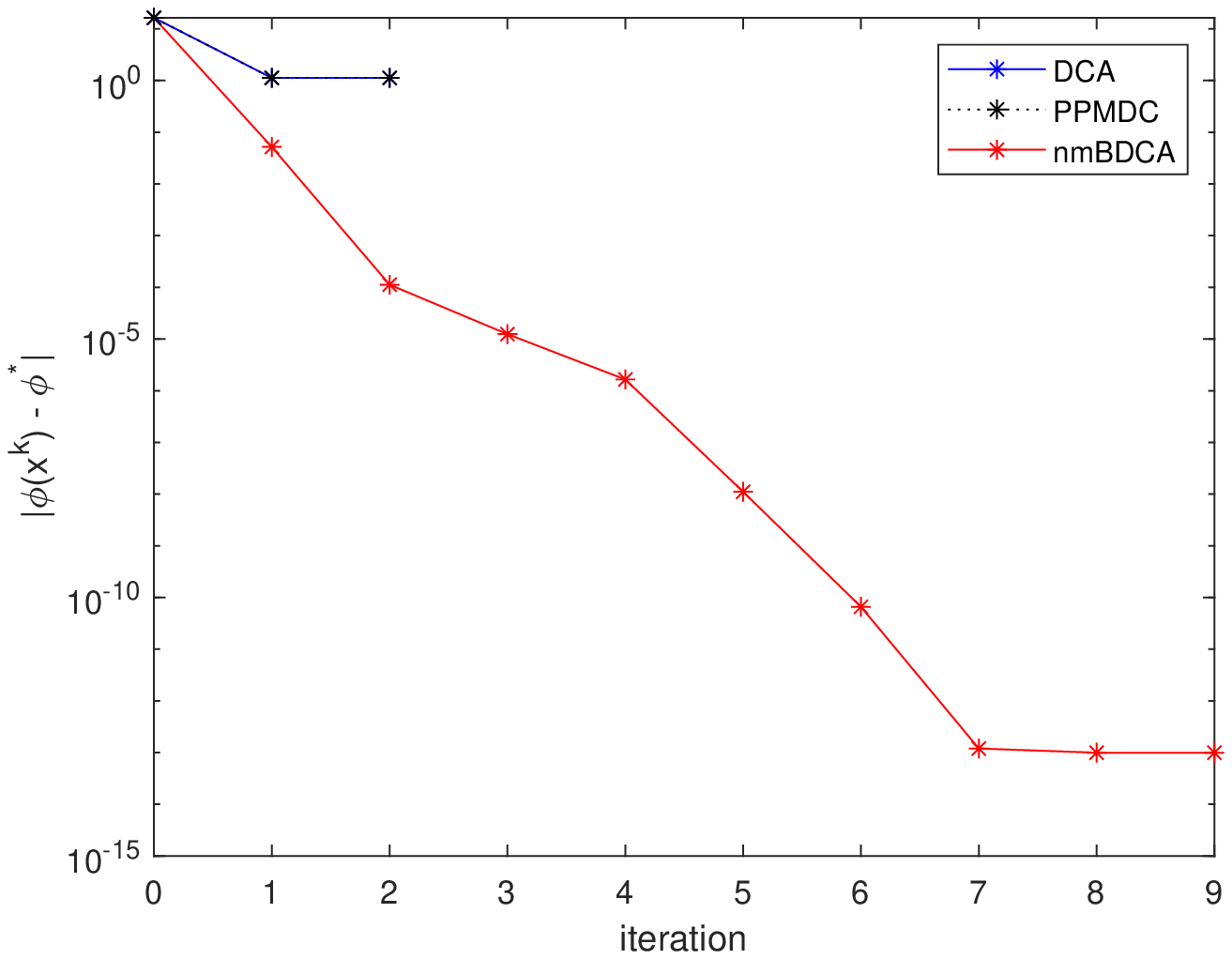}}
\subfloat{\label{fig7:d}\includegraphics[width=0.25\textwidth]{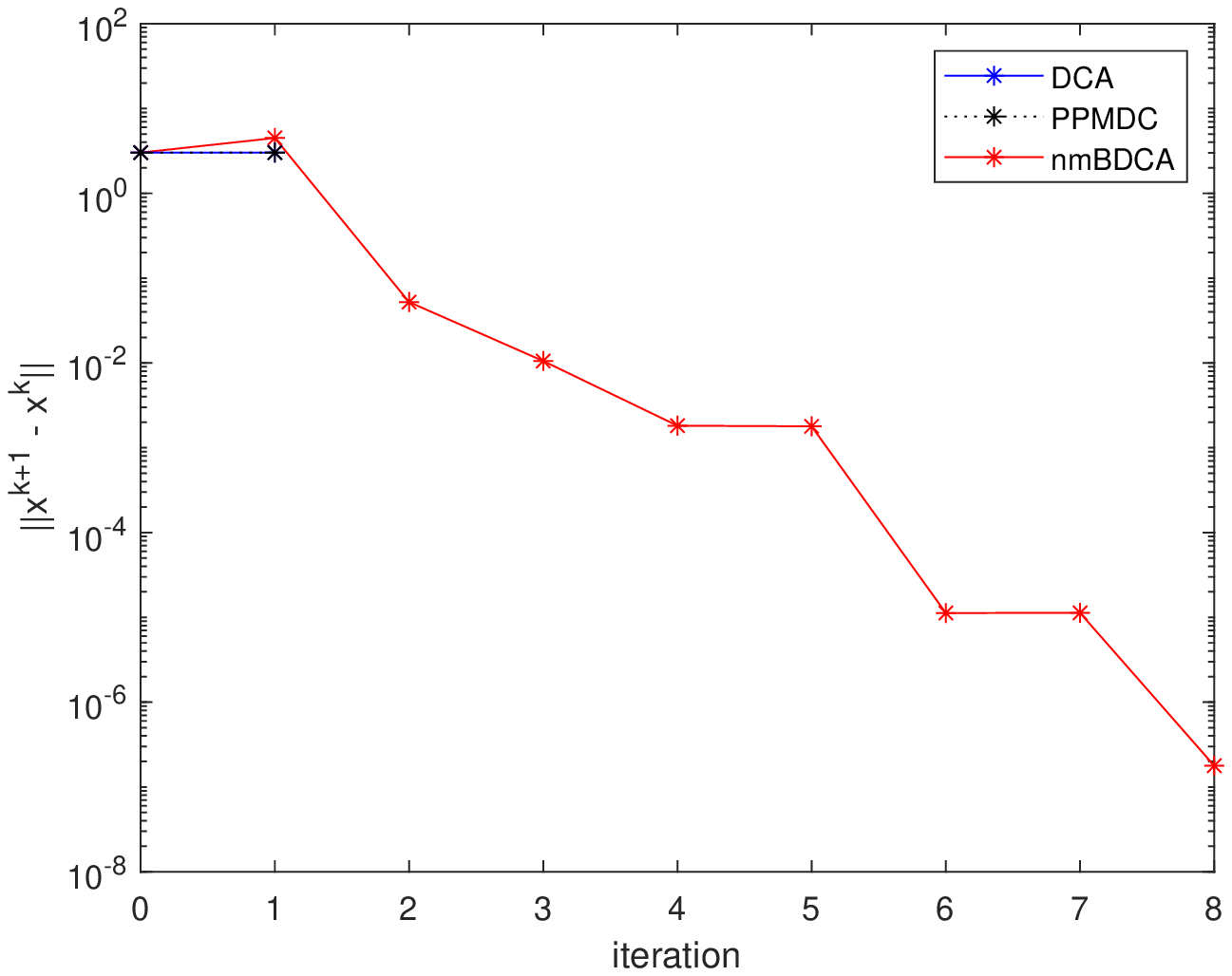}}%
\caption{Value of $||\phi(x^k)-\phi^*||$ and $||x^{k+1}-x^k||$ (using log. scale) for Problem~\ref{prob6}.}
\label{fig7}
\end{figure}

\begin{figure}[h!]
\centering
\subfloat{\label{fig2:a}\includegraphics[width=0.3\textwidth]{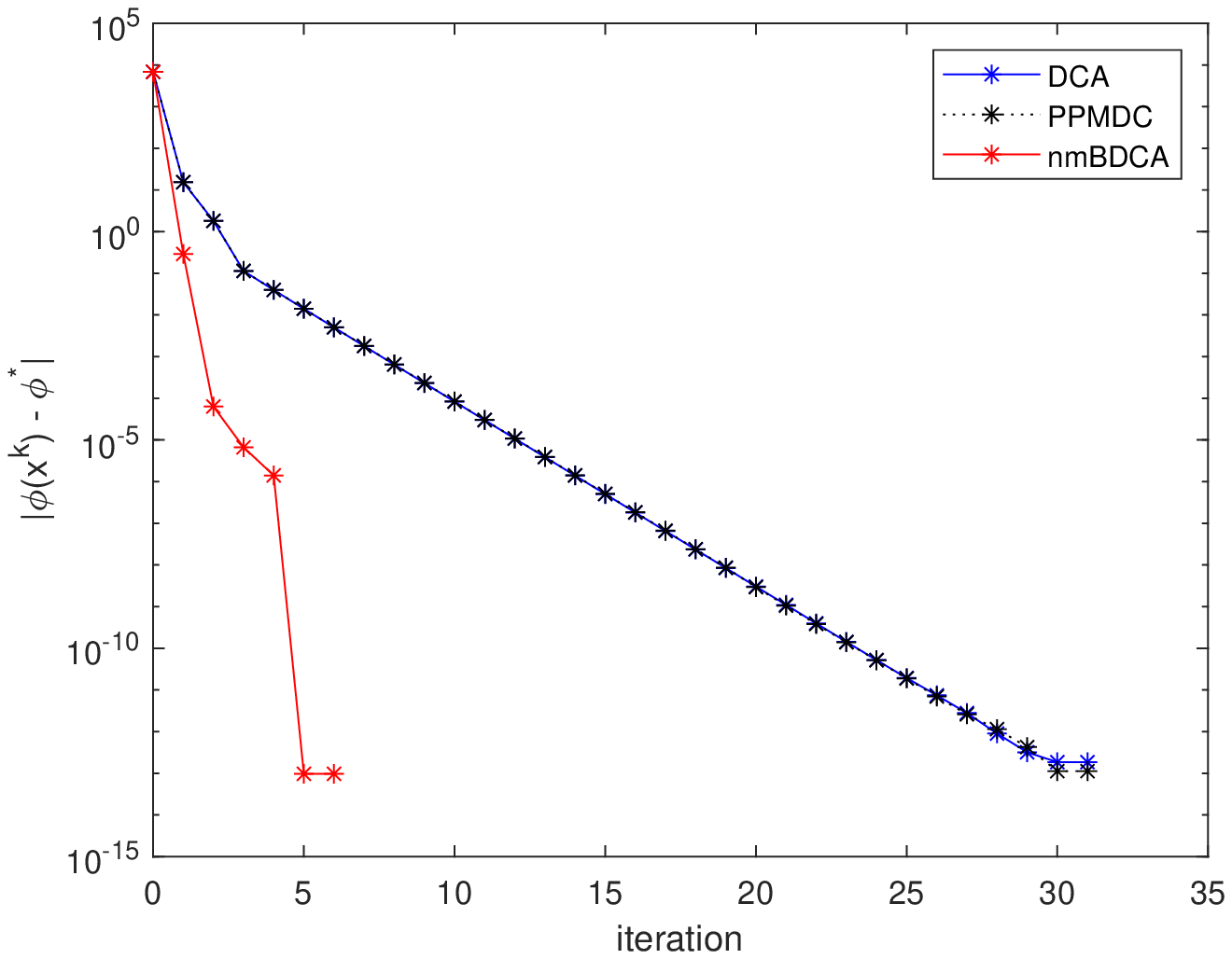}}
\subfloat{\label{fig2:b}\includegraphics[width=0.3\textwidth]{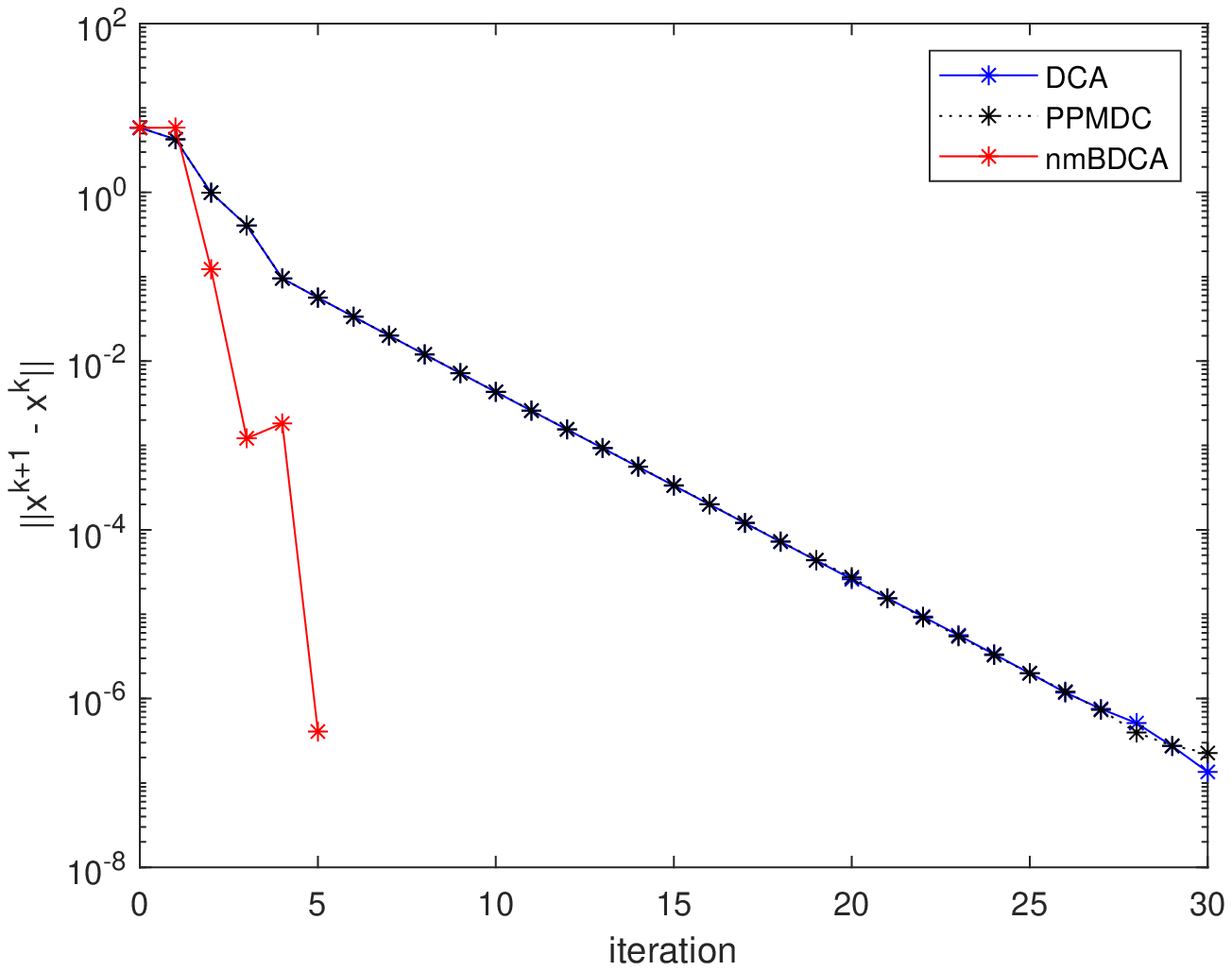}}
\caption{Value of $||\phi(x^k)-\phi^*||$ and $||x^{k+1}-x^k||$ (using log. scale) for Problem~\ref{prob1}.}
\label{fig2}
\end{figure}

\begin{figure}[h!]
\centering
\subfloat{\label{fig3:a}\includegraphics[width=0.25\linewidth]{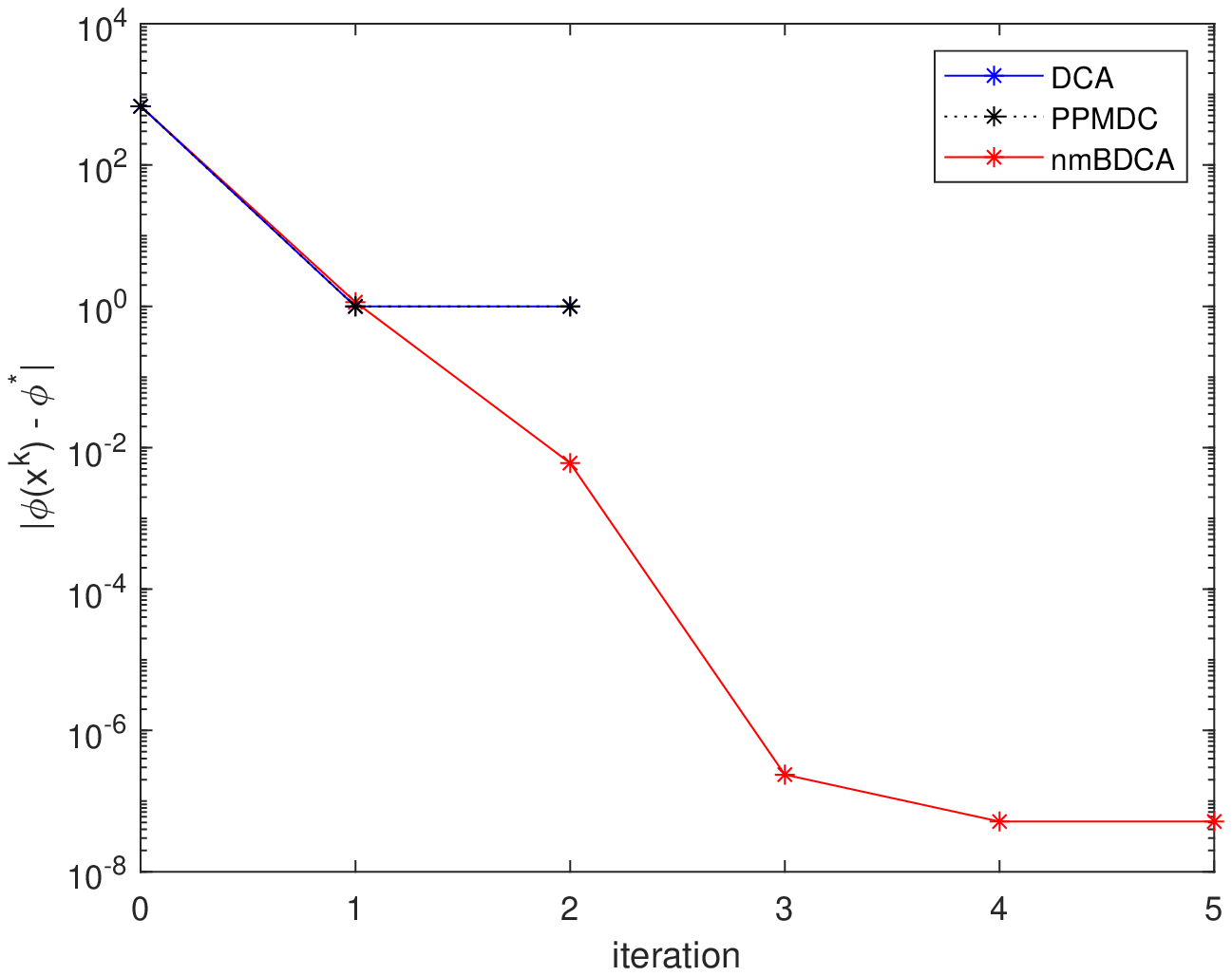}}
\subfloat{\label{fig3:b}\includegraphics[width=0.25\linewidth]{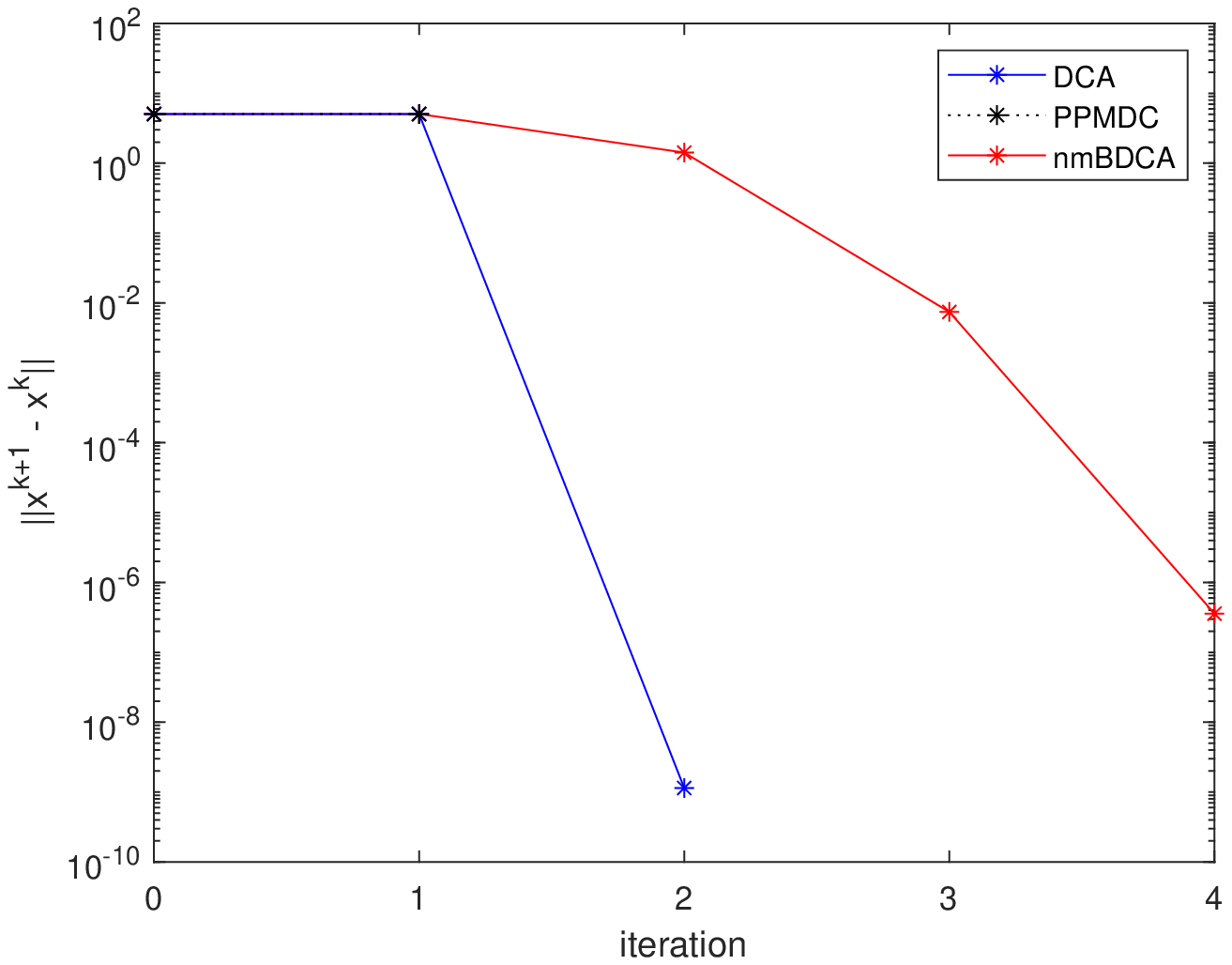}}
\subfloat{\label{fig3:c}\includegraphics[width=0.25\textwidth]{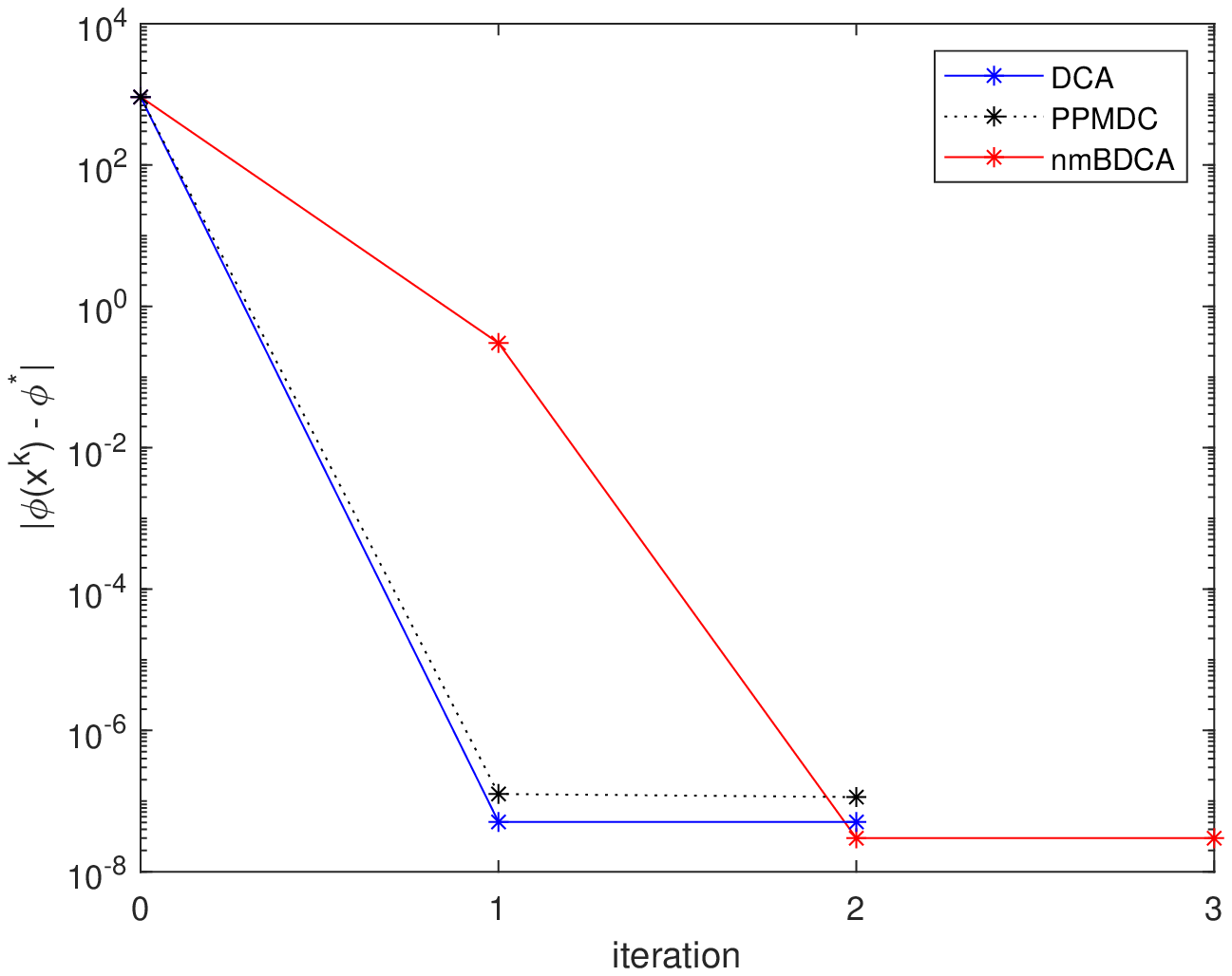}}
\subfloat{\label{fig3:d}\includegraphics[width=0.25\textwidth]{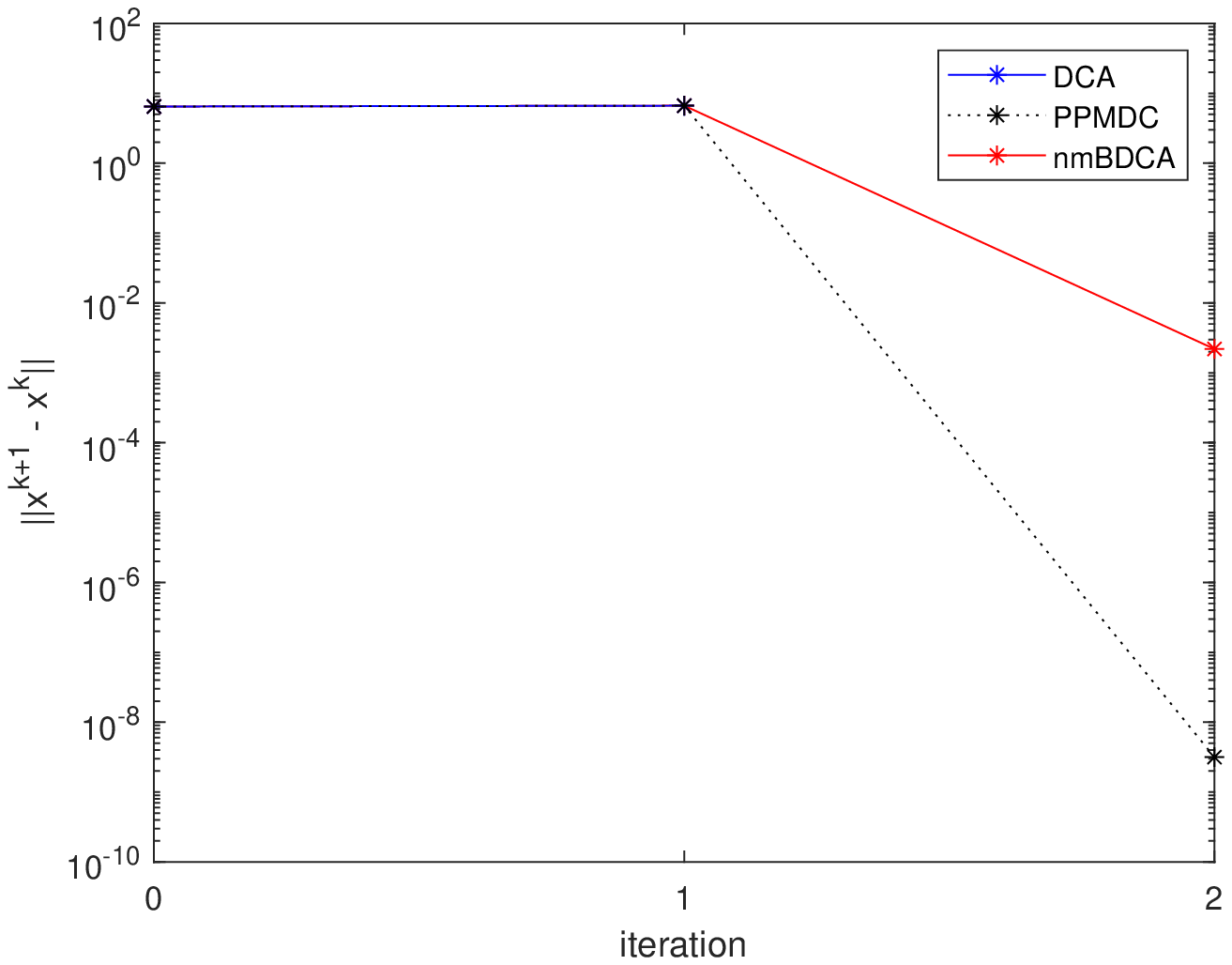}}%
\caption{Value of $||\phi(x^k)-\phi^*||$ and $||x^{k+1}-x^k||$ (using log. scale) for Problem~\ref{prob2}.}
\label{fig3}
\end{figure}

\begin{figure}[h!]
\centering
\subfloat{\label{fig4:a}\includegraphics[width=0.25\linewidth]{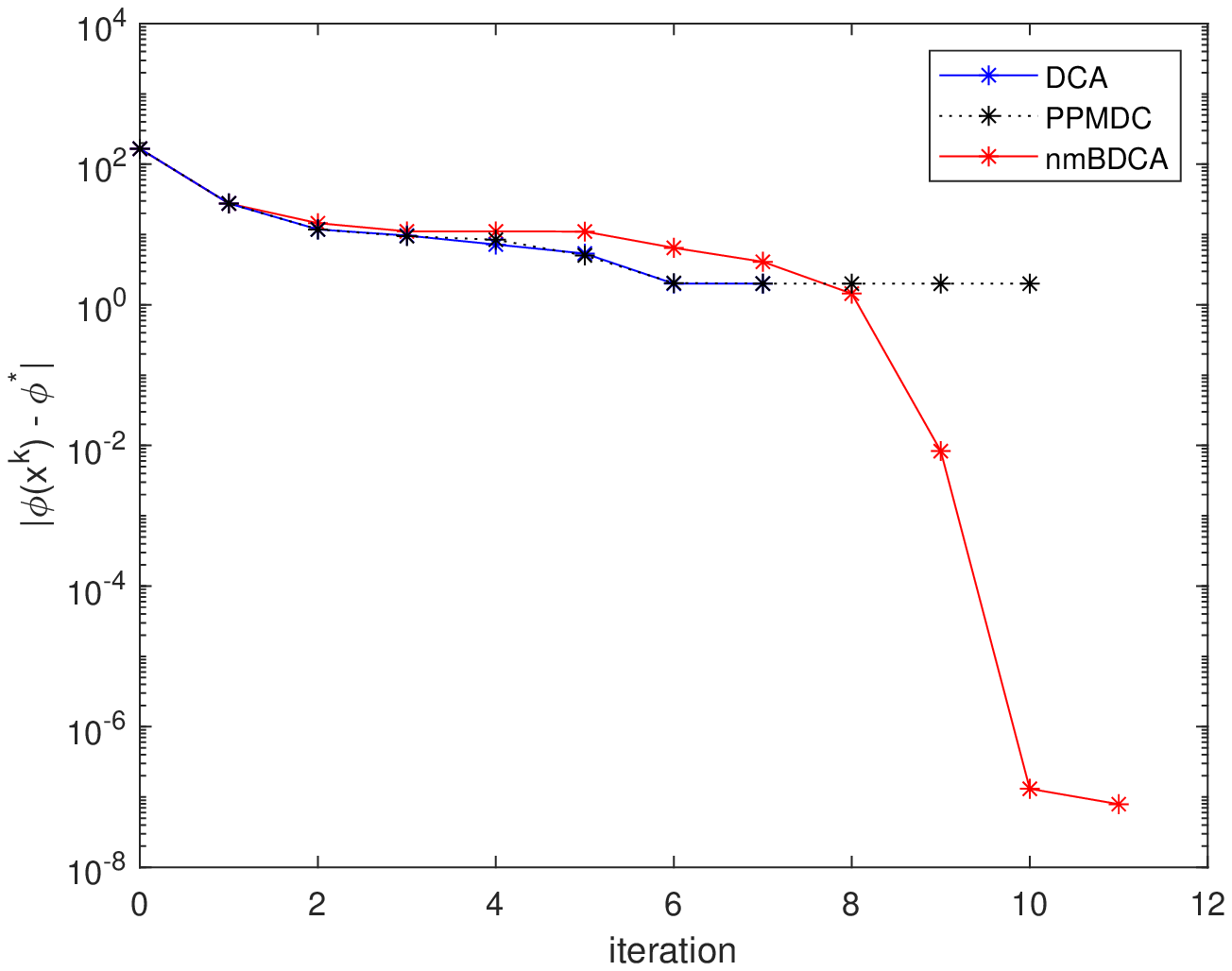}}
\subfloat{\label{fig4:b}\includegraphics[width=0.25\linewidth]{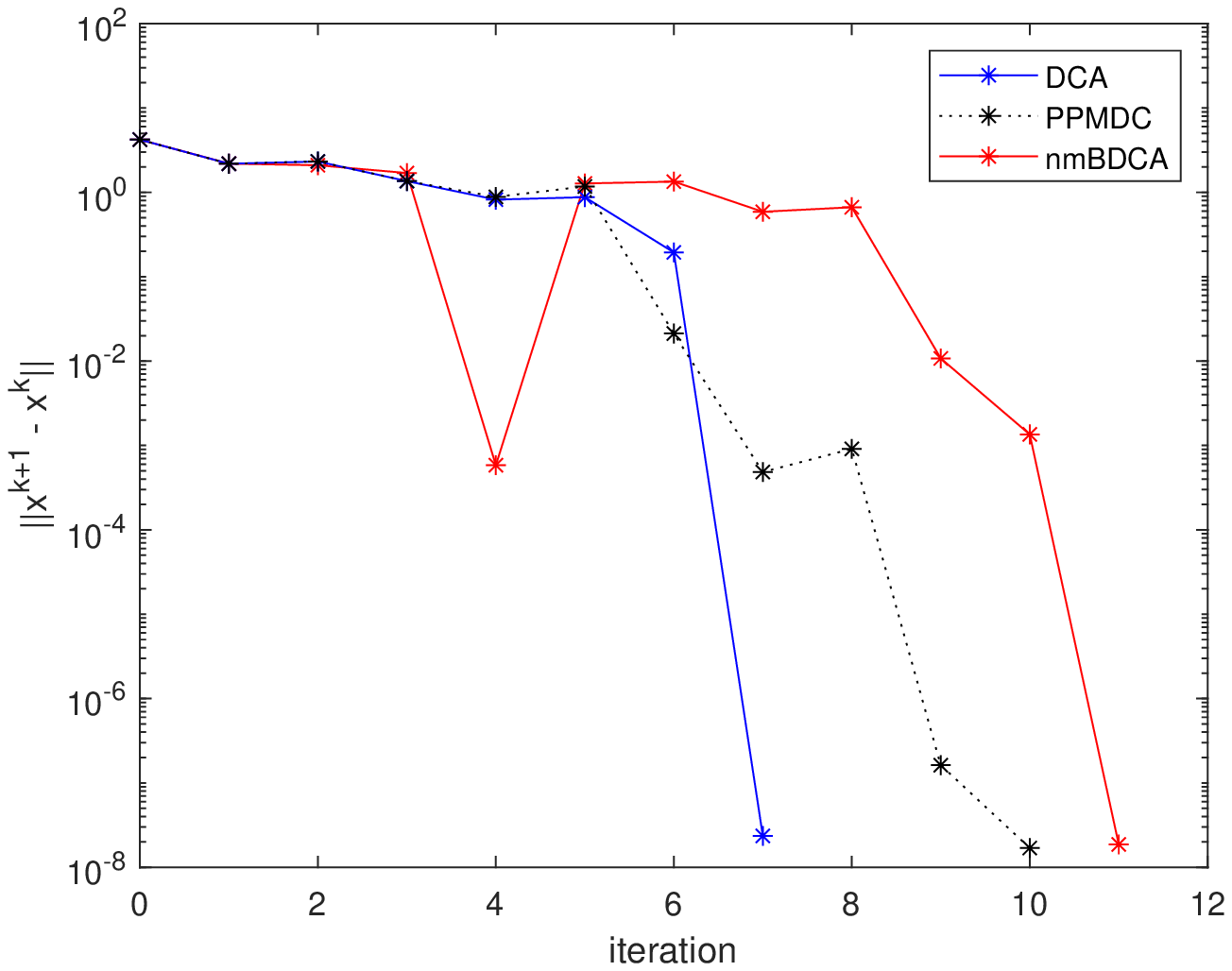}}
\subfloat{\label{fig4:c}\includegraphics[width=0.25\textwidth]{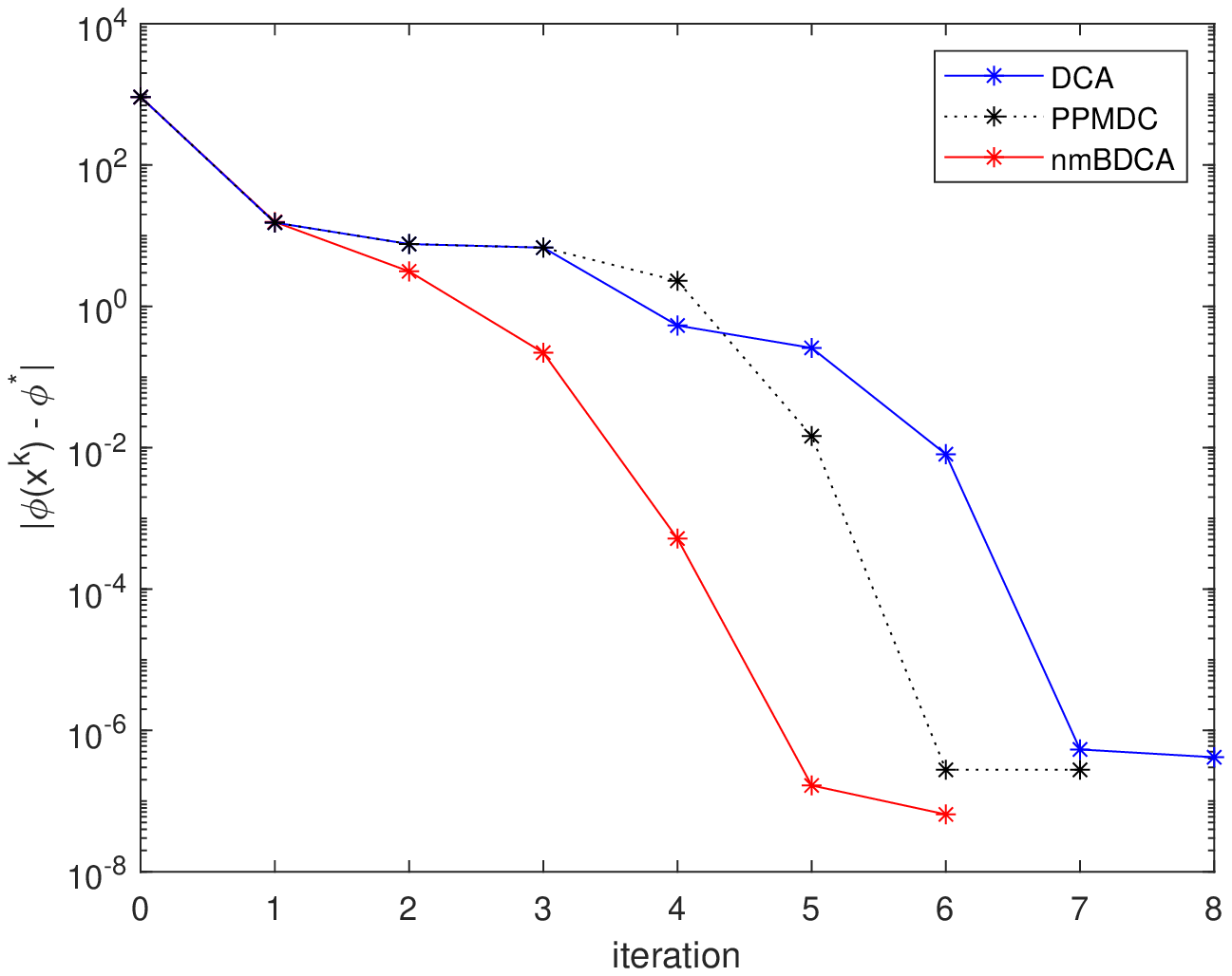}}
\subfloat{\label{fig4:d}\includegraphics[width=0.25\textwidth]{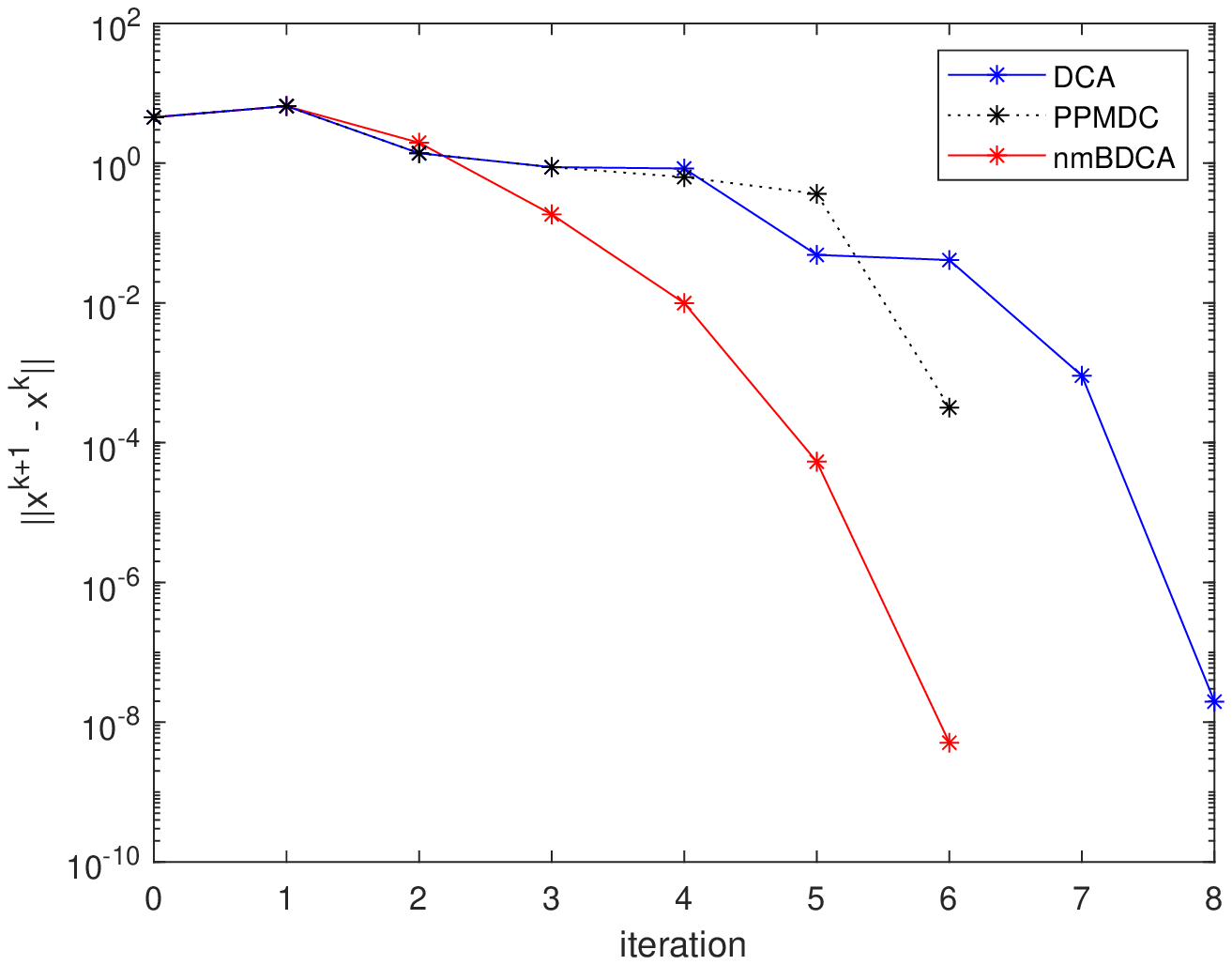}}%
\caption{Value of $||\phi(x^k)-\phi^*||$ and $||x^{k+1}-x^k||$ (using log. scale) for Problem~\ref{prob3}.}
\label{fig4}
\end{figure}

\begin{figure}[h!]
\centering
\subfloat{\label{fig5:a}\includegraphics[width=0.25\linewidth]{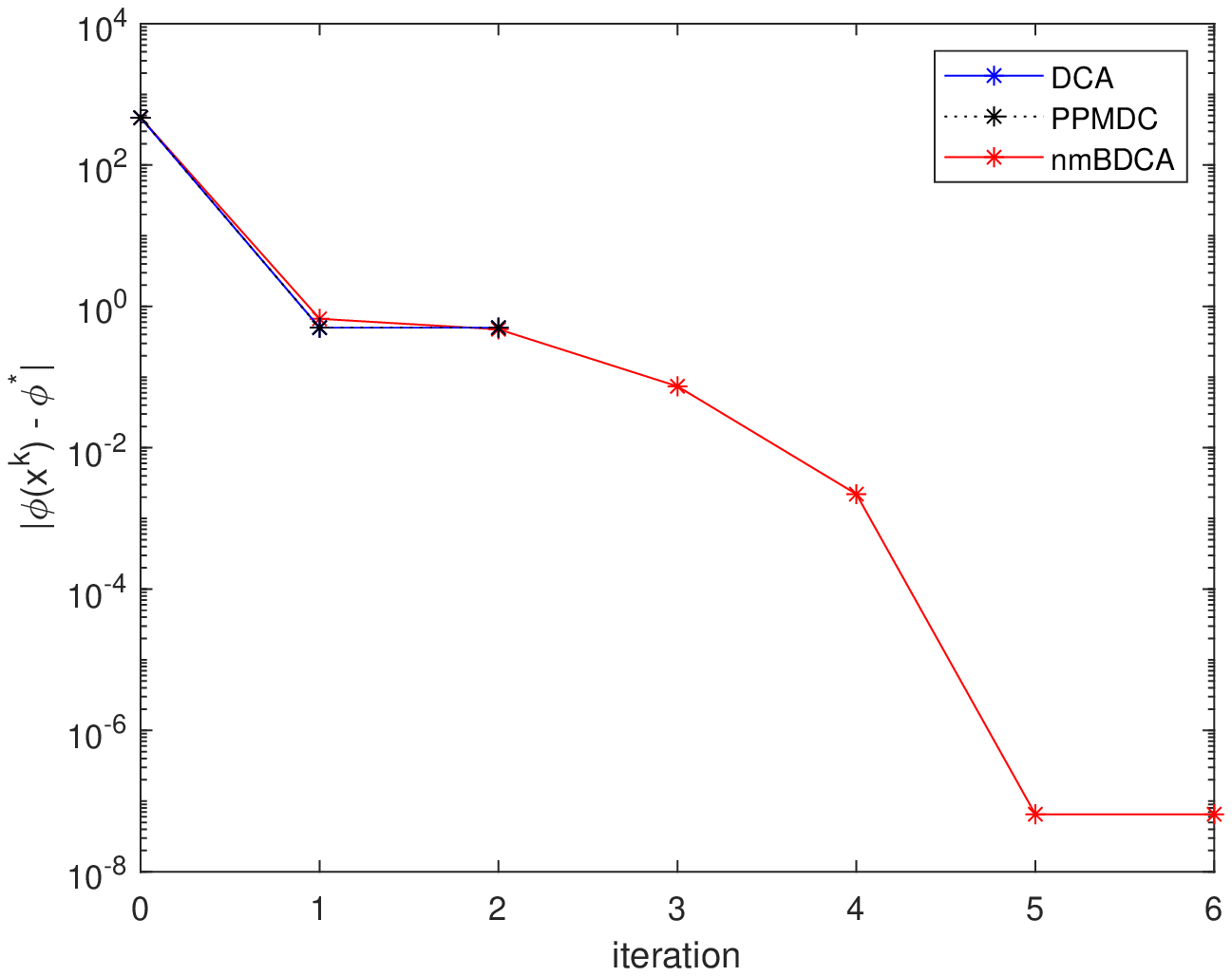}}
\subfloat{\label{fig5:b}\includegraphics[width=0.25\linewidth]{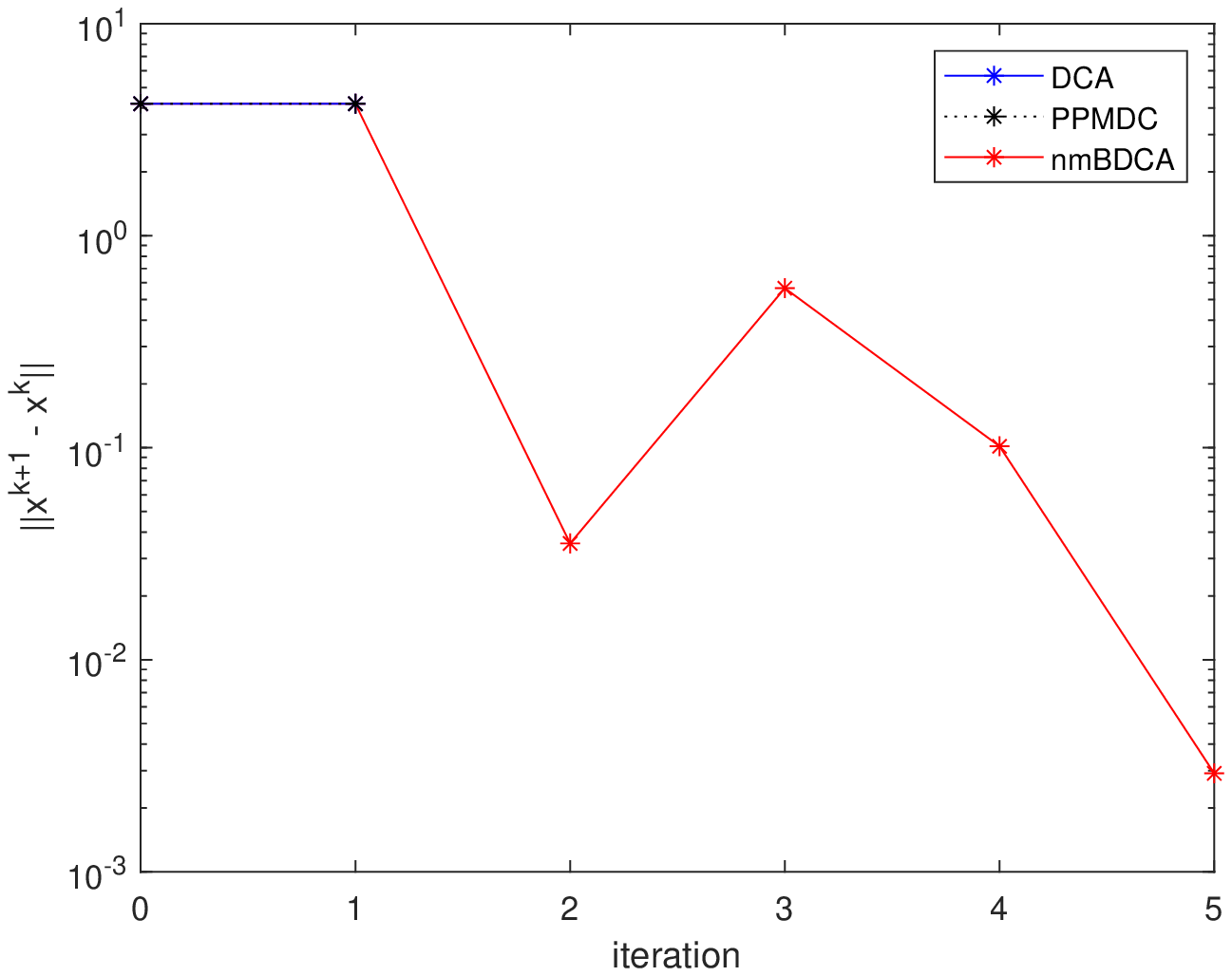}}
\subfloat{\label{fig5:c}\includegraphics[width=0.25\textwidth]{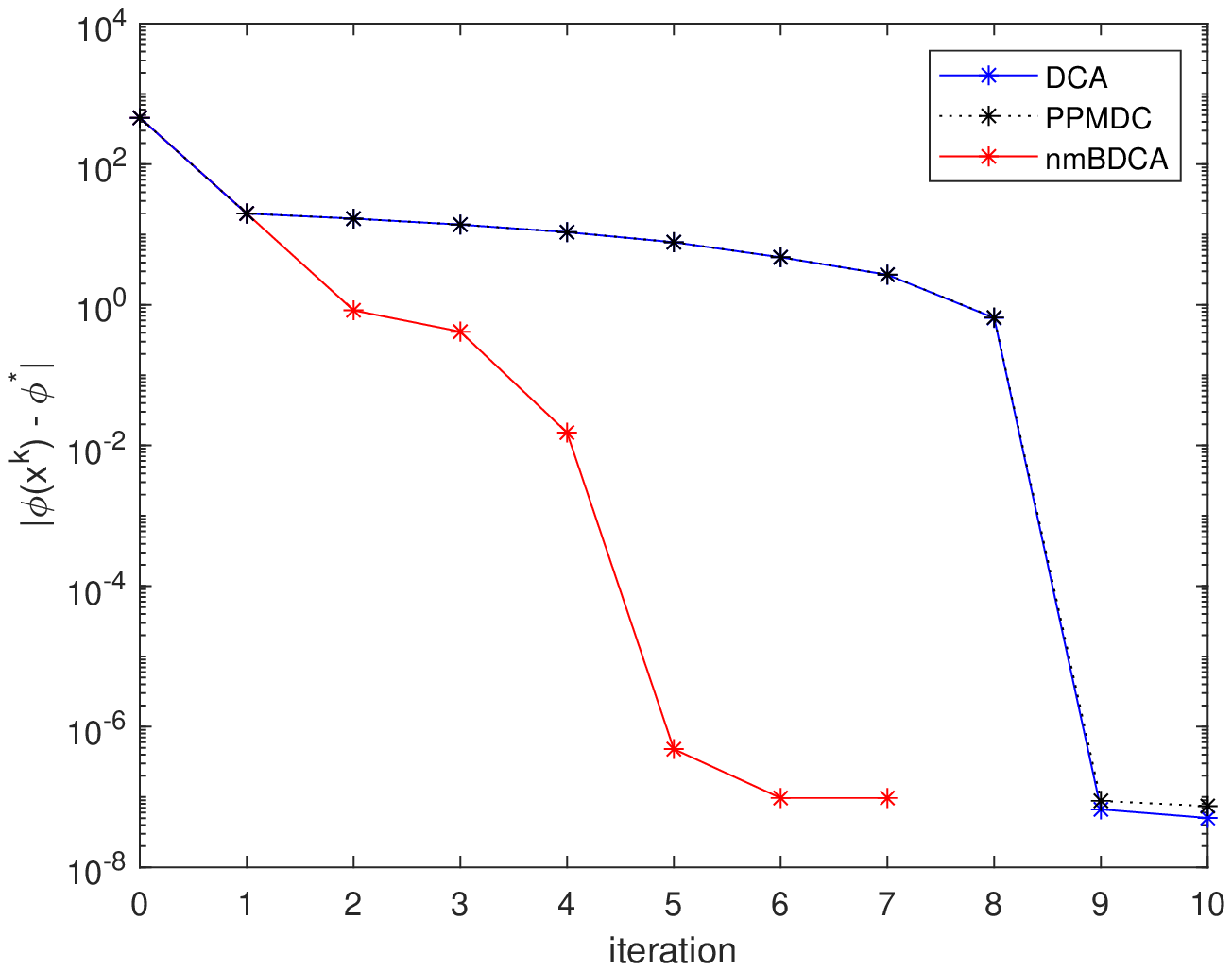}}
\subfloat{\label{fig5:d}\includegraphics[width=0.25\textwidth]{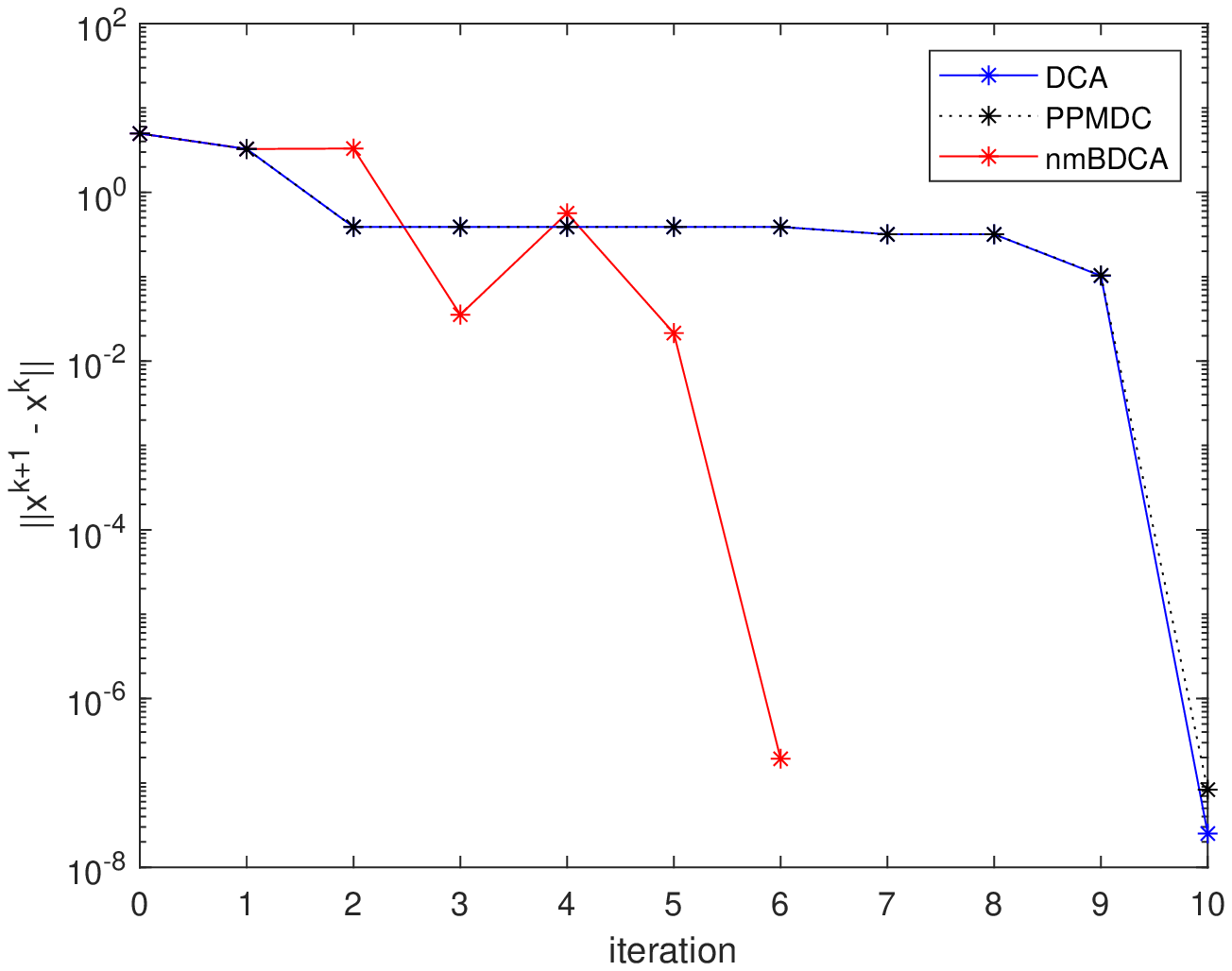}}%
\caption{Value of $||\phi(x^k)-\phi^*||$ and $||x^{k+1}-x^k||$ (using log. scale) for Problem~\ref{prob4}.}
\label{fig5}
\end{figure}

\begin{figure}[h!]
\centering
\subfloat{\label{fig6:a}\includegraphics[width=0.25\linewidth]{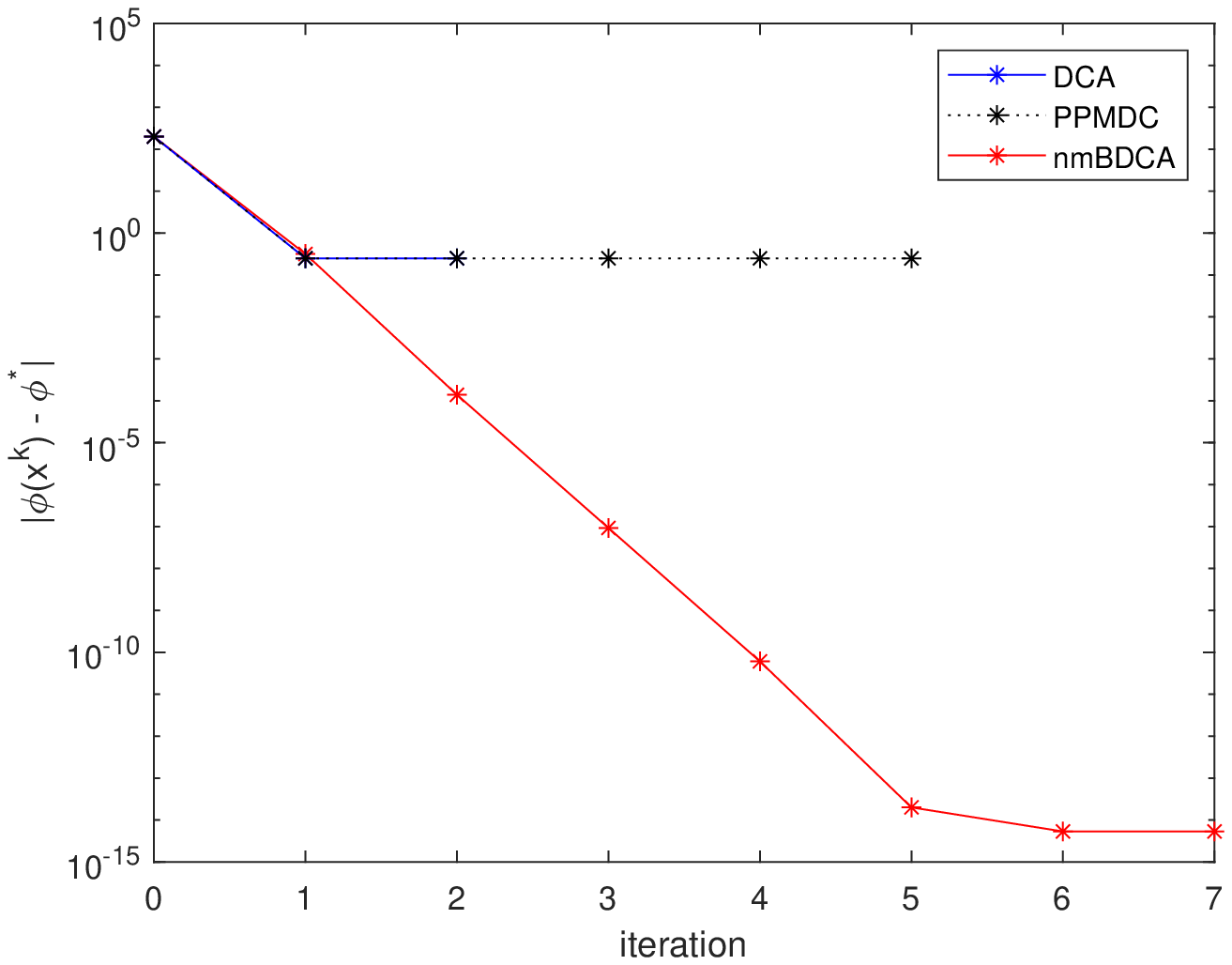}}
\subfloat{\label{fig6:b}\includegraphics[width=0.25\linewidth]{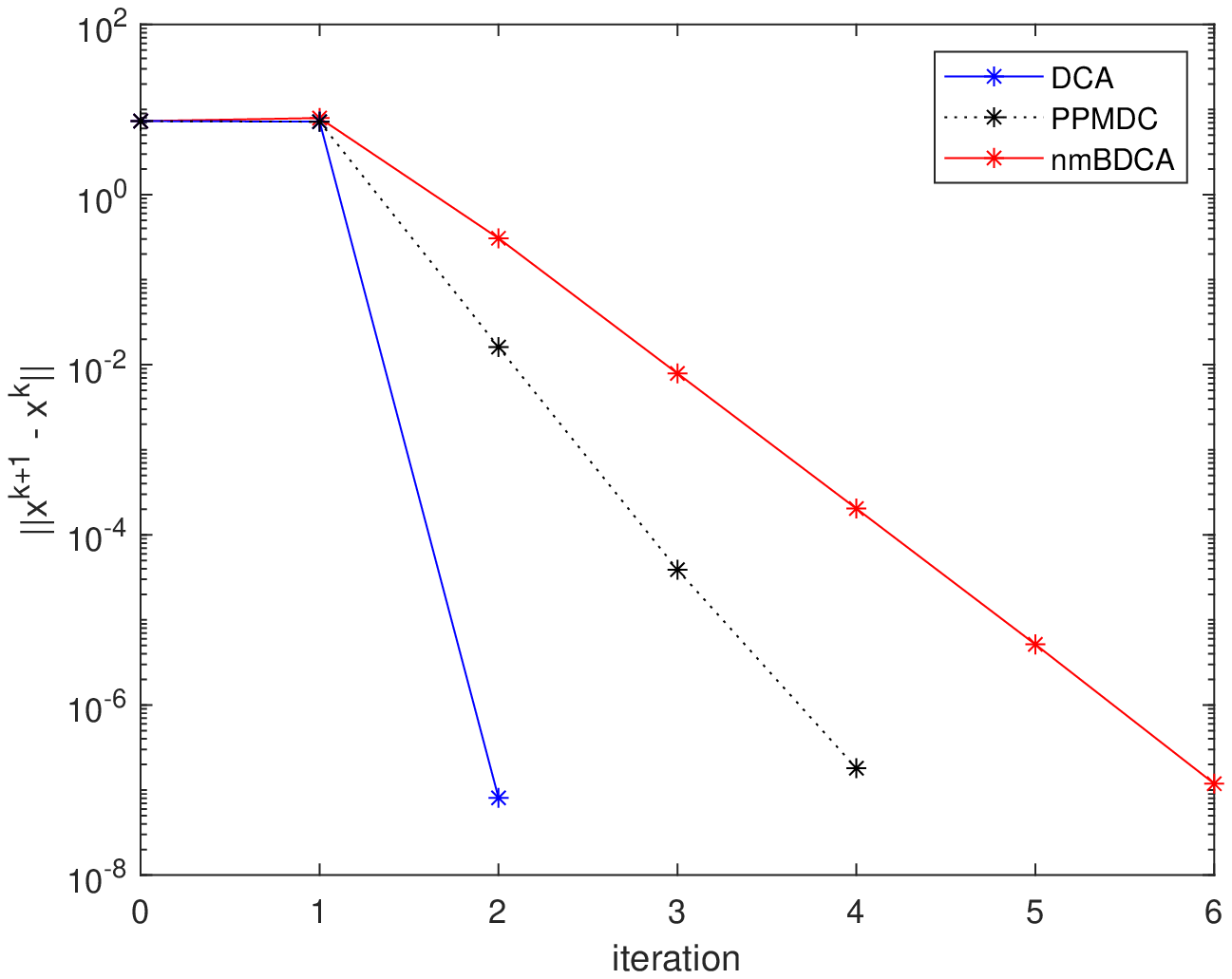}}
\subfloat{\label{fig6:c}\includegraphics[width=0.25\textwidth]{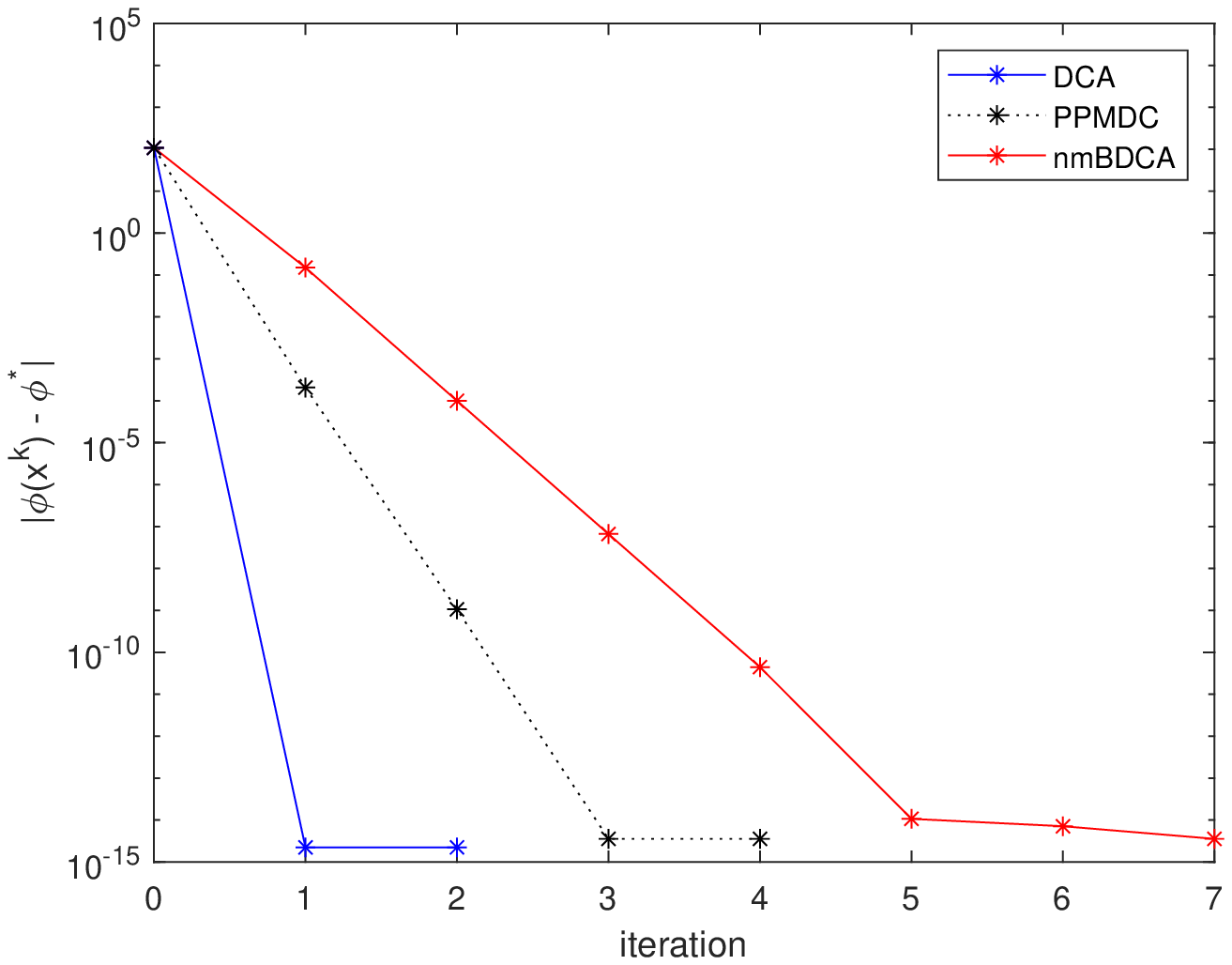}}
\subfloat{\label{fig6:d}\includegraphics[width=0.25\textwidth]{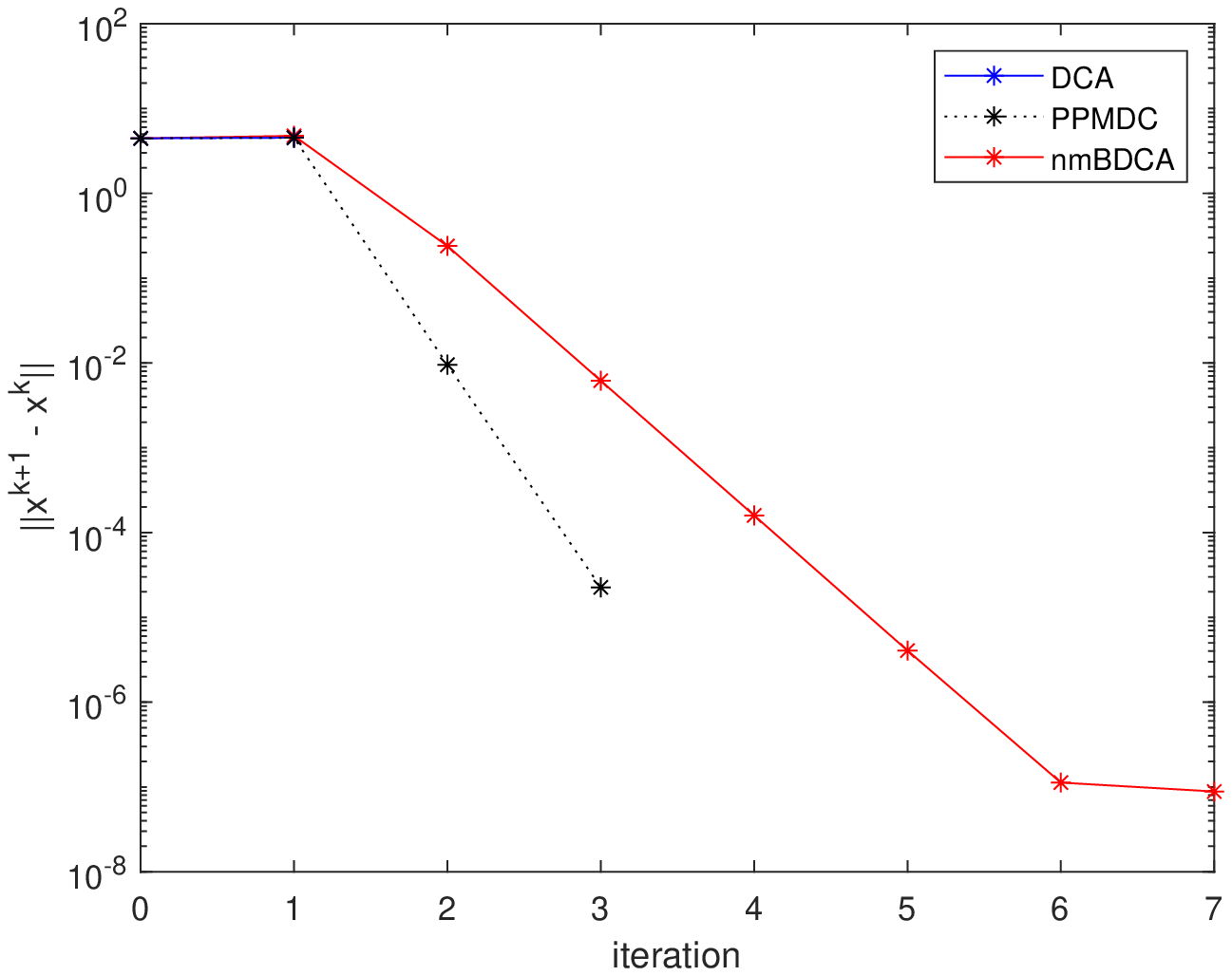}}%
\caption{Value of $||\phi(x^k)-\phi^*||$ and $||x^{k+1}-x^k||$ (using log. scale) for Problem~\ref{prob5}.}
\label{fig6}
\end{figure}

It is worth to mention that the freedom in the choice of the parameters $\lambda _{-1}$, $\rho$ and $\zeta$ in the line search of nmBDCA enables the possibility of speed up the method, specially as done in \cite{ARAGON2019} with the self-adaptive trial step size on the parameter $\lambda_k$. It is remarked in \cite{ARAGON2019} that this strategy allowed to obtain a two times speed up BDCA in their numerical experiments, when compared with the constant strategy. More precisely, other possibilities in the choice of the trial step size could improve the performance of nmBDCA.

Other important question is the computational influence in nmBDCA of the strongly convexity modulus $\sigma>0$ in the DC components. It does not explicitly appear neither in nmBDCA nor DCA (and PPMDC) but its influence can be seem for instance in Proposition~\ref{pr:ffr}~\ref{it:ffr:ineq} and Proposition~\ref{prop15u}~\ref{it:prop15u:decrease}. As mentioned in Remark~\ref{infinitydecomp}, given a DC function $\phi(x)$ with DC decomposition $\phi(x)=g(x) - h(x)$, we can add to both DC decomposition a strongly convex term $({\sigma}/{2})\| x \|^{2}$ to obtain a new DC representation $\phi(x)=(g(x)+\frac{\sigma}{2}\| x \|^{2}) - (h(x)+\frac{\sigma}{2}\| x \|^{2})$. It leads to the open problem whose answer is crucial to a deep understanding of the DC structure: Does exist an ``optimal" (in some sense) DC decomposition? This problem is intimately related to the notion of more convex and less convex (domination concept) introduced by Moreau; see \cite{Moreau}. This issue has been dealt for polynomial functions with in \cite{FerrerMartinezLegaz2009} and quadratic functions by \cite{BomzeLocatelli2004}.

In order to clarify the importance of this question, let us consider the following concept and a simple example.

\begin{definition}
We say that $\phi(x)=g(x) - h(x)$ is an undominated DC decomposition for $\phi$ if there is no other DC decomposition $\tilde{g}$ and $\tilde{h}$ for $\phi$ such that $g(x)=\tilde{g}(x)+p(x)$ and $h(x)=\tilde{h}(x)+p(x)$, for some non-constant convex function $p$.
\end{definition}

The key idea of algorithms for DC functions is to minimize convex bound functions instead of the possibly non-convex DC function. The following simple example shows that the interest for undominated DC decompositions lies in the fact that they allow us to deliver better bounds.

\begin{example}\cite[Example 4.85]{LocatelliSchoen2013}
Let $\phi(x)=x^3-x^2$ in $X=[0,1]$. Then, $$g_t(x)=x^3+tx^2\quad\mbox{and}\quad h_t(x)=(t+1)x^2,\quad t\geq 0,$$
define an infinite class of DC decomposition of $\phi$ over $X$, all dominated by the decomposition with $t=0$. Assume that we want to find the convex understimator of $\phi$ over $X$, i.e., $\psi_{t}(x)=g_t(x) - (t+1)x$ which is obtained by replacing the concave function $-h_t(x)$ by its convex envelope over $X$. Thus, the maximum distance between $\phi$ and its convex understimator $\psi_{t}$ is attained at $x=0.5$ and is equal to $$\max_{x\in X}\phi(x)-\psi_t(x)=\frac{1}{4}(1+t).$$
Therefore, the maximum distance is minimized for $t=0$, where $g_t$ and $h_t$ are undominated.
\end{example}

From a theoretical point of view $\sigma$ adds more structure to the DC representation. Nevertheless, from a computational point of view adding $\sigma$ may be a drawback. Next, we run nmBDCA, DCA and PPMDC in order to find a DC decomposition (related to $\sigma$) for Problem~\ref{prob7} and \ref{prob6} which needs less iterates and CPU time until the methods stop (in this sense it is more efficient). The results are presented in Figure~\ref{fig9} and \ref{fig10}. To this end, we consider the following DC components for Problem~\ref{prob7}
$$g(x)=\sin\left(\sqrt{|3x_1+|x_1-x_2| +2x_2|}\right)+\sigma(x_1^2+x_2^2) \quad \mbox{and}\quad  h(x)=\sigma(x_1^2+x_2^2),$$
and for Problem~\ref{prob6}
$$g(x)=-\frac{5}{2}x_1+|x_1|+|x_2|+\sigma(x_1^2+x_2^2) \quad \mbox{and}\quad  h(x)=(\sigma-0.5)(x_1^2+x_2^2).$$

\begin{figure}[tbp]
\centering
\subfloat[Iterations]{\label{fig10a}\includegraphics[width=0.5\linewidth]{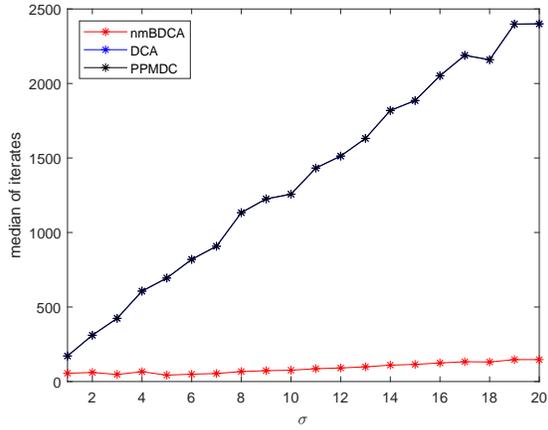}}
\subfloat[CPU time (in seconds)]{\label{fig10b}\includegraphics[width=0.5\linewidth]{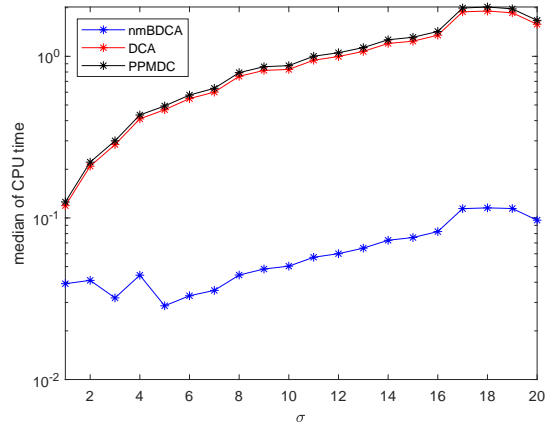}}
\caption{Median of 100 run for different values of $\sigma$ in Problem~\ref{prob7}.}
\label{fig10}
\end{figure}

\begin{figure}[tbp]
\centering
\subfloat[Iterations]{\label{fig9a}\includegraphics[width=0.5\linewidth]{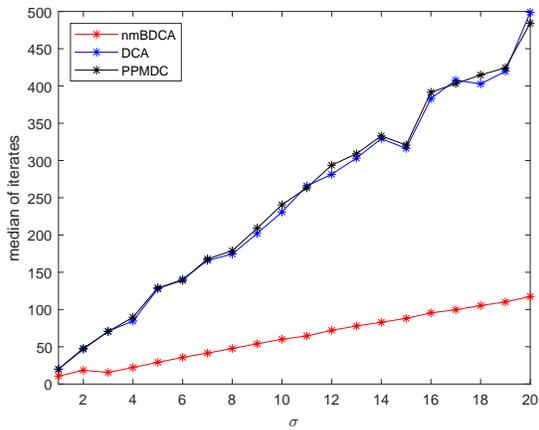}}
\subfloat[CPU time (in seconds)]{\label{fig9b}\includegraphics[width=0.5\linewidth]{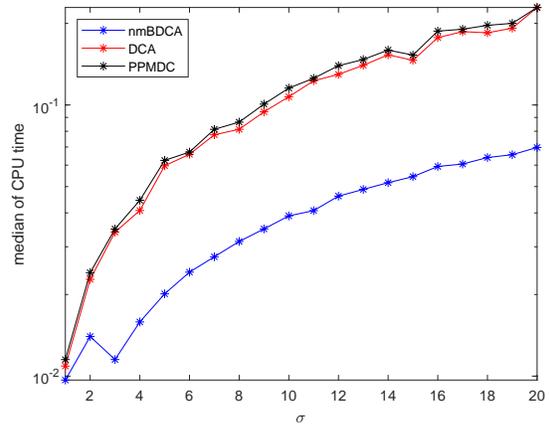}}
\caption{Median of 100 run for different values of $\sigma$ in Problem~\ref{prob6}.}
\label{fig9}
\end{figure}

In our numerical experiments, we consider positive integer values for $\sigma$ from $1$ to $20$. In Figure~\ref{fig10}, we can see that $\sigma=5$ provides the best performance for nmBDCA while $\sigma=1$ is the best choice for DCA and PPMDC in Problem~\ref{prob7}. In Problem~\ref{prob6}, all the methods have the best performance for $\sigma=1$ as we can see in Figure~\ref{fig9}. In both problems, Figures~\ref{fig10} and \ref{fig9} clearly show that the higher the value of $\sigma$, the worse the performance of the methods.

However, some questions still rise. Which is the best DC decomposition from a computational point of view and how can it be obtained? Could it be connected with some suitable theoretical concept? We refrain from discussing these questions to our general context of non-smooth DC functions because we understand it deserves to be deeply studied and maybe it cannot be completely answered unless it is considered for some specific cases. To illustrate how difficult are these questions we refer to \cite{BomzeLocatelli2004} where it is presented a quadratic problem which admits an infinite number of undominated DC decompositions. 

\section{Conclusions}
We have developed a non-monotone version of the boosted DC algorithm (BDCA) proposed in \cite{ARAGON2019} for DC programming when both DC components are not differentiable. Under mild conditions on the parameter that control the non-monotonicity of the objective function and standard assumptions on the DC function some convergence results and iteration-complexity bounds were obtained. In the case where the first DC component is differentiable, the global convergence and different iteration-complexity bounds were established assuming the Kurdyka-\L{}ojasiewicz property of the objective function. We have applied this non-monotone boosted DC algorithm (nmBDCA) for some academic tests. Our numerical experiments indicate that nmBDCA outperforms DC Algorithm (DCA~\cite{Pham1986}) and Proximal Point Method for DC functions (PPMDC~\cite{SunSampaio2003}) in both computational performance and quality of the solution found. 

A very interesting topic of future research is that the idea of using non-monotone line search to establish the well-definition of the nmBDCA can also be employed in other methods of non-differentiable convex optimization. For instance, let $f:\mathbb{R}^2\to \mathbb{R}$ be non-differentiable and convex function given by $f(x,y)=(x^2+y^2)/4+|x|+2 |y|$. The subdifferential of $f$ is given by 
\begin{align} \label{eq:sdex}
\partial f(x)=\begin{cases}
([-1,1], y/2+2 sgn(y)) , \qquad &\mbox{if } x=0, y\neq 0 ;\\
(x/2+sgn(x), [-2,2]) , \qquad &\mbox{if } x\neq 0, y=0 ;\\
([-1,1], [-2,2]) , \qquad &\mbox{if } x= 0, y=0 ;\\ 
(x/2+ sgn(x) , y/2+2 sgn(y)) , \qquad &\mbox{if } x\neq 0, y\neq 0.
\end{cases}
\end{align}
Take $x^0:=(4,4)$. Since $f$ is differentiable at $x^{0}$, we have $\partial f(x^{0})=\{(3,4)\}$ and setting $s^{0}=(3,4)\in \partial f(x^{0})$, we know that $-s^0$ is a descent direction of $f$ at $x^0$. Taking $\lambda _{-1}=1>0$, $\rho= \zeta = 1/2\in (0,1)$ we obtain 
\begin{align*}
5/4=f(x^0- \zeta ^{0}\lambda _{-1}s^0) \leq f(x^0)-\rho \zeta^{0}\lambda _{-1}\|s^0\|^{2}=30/4, 
\end{align*}
where $j^0=0$. Thus, $\lambda_{0}=1$ and setting $x^{1}=x^{0}-\lambda_{0}s^{0} =(1,0)$, we obtain that 
$$f(x^{1})\leq f(x^{0})-\lambda_{0}\|s^0\|^{2}.$$
This shows that starting with $x^0$ we can apply a monotone line search in order to find $x^1=(1,0)$. This was possible because $f$ is differentiable at $x^0=(4,4)$ and $-s^0=(-3,-4)$ is a descent direction of $f$ at $x^0$. On the other hand, since $f$ is not differentiable at $x^1=(1,0),$ given an arbitrary $s^1\in \partial f(x^1)$, the direction $-s^1$ may not be a descent direction, which means that we cannot apply monotone line search strategies in this case. Indeed, first note that \eqref{eq:sdex} gives $\partial f(x^1)=(3/2, [-2,2])$. Taking $s^1:=(3/2, -2)$ and $\lambda \in (0, 2/3)$, we have 
\begin{equation*} 
f(x^1-\lambda s^1)-f(x^1)= \frac{7}{4} \lambda+\frac{25}{16} \lambda^2.
\end{equation*} 
Hence, we conclude that $f'(x^{1}, -s^1)=7/4 >0$, which means that $-s^1=(-3/2, 2)$ is an ascent direction of $f$ at $x^1$. Thus, a monotone line search cannot be performed in this case. However, $ \lim_{\lambda \to 0^+} (f(x^1-\lambda s^1)-f(x^1)+\rho \lambda \| s^{1}\| ^{2})=0$. Thus, for any $\nu_1>0$, there exists $\delta >0$ such that $f(x^1-\lambda s^1)-f(x^1)+\rho \lambda \| s^1\| ^{2}<\nu_1$, for all $\lambda\in (0, \delta)$. Therefore, a non-monotone line search such as
\begin{equation*} 
j_k:=\min \left\{j\in {\mathbb N}: ~f( x^{k}-\zeta^{j} \lambda _{k-1}s^{k})\leq f(x^{k})-{\rho} \left(\zeta^{j}{\lambda_{k-1} }\right)\| s^{k}\| ^{2} +\nu _{k} \right\}. 
\end{equation*}
can be performed. This motivates us to define the following subgradient method with non-monotone line search to minimize a convex function $f:\mathbb{R}^n\to \mathbb{R}$ .

\begin{algorithm}[H]
\caption{SubGrad method with non-monotone line search}\label{Alg1s}
\begin{algorithmic}[1]
\STATE {Fix $\lambda _{-1}>0$, $0<\rho<1$ and $\zeta \in (0,1)$. Choose $x^0\in \mathbb{R}^{n}$. Set $k=0$.}
\STATE{Choose $s^{k}\in\partial f(x^{k})$. If $s^k=0$, then STOP and return $x^{k}$. Otherwise, take $\nu _{k}\in \mathbb{R}_{++}$ and set $\lambda _{k}:= \zeta ^{j_{k}}\lambda_{k-1},$ where
\begin{equation} \label{eq:jk}
j_k:=\min \left\{j\in {\mathbb N}: ~f( x^{k}-\zeta^{j} \lambda _{k-1}s^{k})\leq f(x^{k})-{\rho} \left(\zeta^{j}{\lambda_{k-1} }\right)\| s^{k}\| ^{2} +\nu _{k} \right\}. 
\end{equation}}
\STATE{ Set $x^{k+1}:=x^{k}-\lambda _{k}s^{k}$, and $k \leftarrow k+1$ and go to Step~2.}
\end{algorithmic}
\end{algorithm}
As we can see in the sequel, Algorithm~\ref{Alg1s} is well-defined. More precisely, if $\nu _{k}>0$, for all $k\in\mathbb{N}$, then Algorithm~\ref{Alg1s} is well defined. Indeed, since $f$ is a convex function, it is also continuous. Thus, we conclude that $\lim_{\lambda \to 0^+}(f(x^{k}-\lambda s^{k}) - f(x^{k})+\rho \lambda \|s^{k}\|^{2})=0$. Hence, due to 
$\nu _{k}>0$, there exists $\eta _{k}>0$ such that 
\begin{equation} \label{eq:ismwd}
f(x^{k}-\lambda s^{k}) - f(x^{k})+\rho \lambda \|s^{k}\|^{2}< \nu _{k}, \qquad \forall \lambda \in (0,{\eta _{k}}].
\end{equation} 
On the other hand, due to $\zeta \in (0,1)$ we have $\lim_{j\in {\mathbb N}} \zeta^{j}{\lambda_{k-1}}=0$. Hence, considering that $\eta_k>0$, there exists ${j_*}\in {\mathbb N}$ such that $\zeta^{j}{\lambda_{k-1}}\in (0, \eta_k]$, for all $j\geq {j_*}$. Therefore, \eqref{eq:ismwd} implies that there exists $j_k$ satisfying \eqref{eq:jk} and the claim is proved.

 It is worth to point out that $s^k$ in step 2 of Algorithm~\ref{Alg1s} is not in general a descent direction at 
$x^k$. However, it follows from \cite[Theorem 4.2.3]{Lemarechal1993} that the set where convex functions fail to be differentiable is of zero measure. Consequently, almost every $s^k\neq 0$ is a descent direction. Therefore, we expect that Algorithm~\ref{Alg1s} has a behavior similar to gradient method with non-monotone line search. We believe this is an issue that deserves to be investigated.

\section*{Acknowledgments}
We would like to thank the reviewers for their constructive remarks which allow us to improve the paper.

\bibliographystyle{plain}

\end{document}